\documentclass[10pt]{article}
\title{Isoperimetric inequalities for the magnetic Neumann and Steklov problems with Aharonov-Bohm magnetic potential}
\author{Bruno Colbois\footnote{Universit\'e de Neuch\^atel\,, Institute de Math\'emathiques, Rue Emile Argand 11\,, 2000 Neuch\^atel\,, Switzerland. Email:\, bruno.colbois@unine.ch}, Luigi Provenzano\footnote{Sapienza Universit\`a di Roma\,, Dipartimento di Scienze di Base e Applicate per l'Ingegneria\,, Via Scarpa 16\,, 00161 Roma\,, Italy. Email:\, luigi.provenzano@uniroma1.it} and Alessandro Savo\footnote{Sapienza Universit\`a di Roma\,, Dipartimento di Scienze di Base e Applicate per l'Ingegneria\,, Via Scarpa 16\,, 00161 Roma\,, Italy. Email:\, alessandro.savo@uniroma1.it}}

\date{\today}
\usepackage{enumerate}
\usepackage{makeidx}
\usepackage{amssymb,amsmath,amsthm,graphicx,float}
\usepackage{xcolor}
\oddsidemargin 0cm
\evensidemargin 0cm
\parindent0cm
\textwidth 16cm
\newtheorem{defi}{Definition} 
\newtheorem{thm}[defi]{Theorem}
\newtheorem{theorem}[defi]{Theorem}

\newtheorem{rem}[defi]{Remark}
 \newtheorem{prop}[defi]{Proposition}
\newtheorem{lemme}[defi]{Lemma}
\newtheorem{lemma}[defi]{Lemma}
\newtheorem{cor}[defi]{Corollary}
 
\newcommand{\twosystem}[2]{\left\{\begin{aligned} &#1\\ &#2\end{aligned}\right.}

\newcommand{\twomatrix}[4]{\begin{pmatrix} #1&#2\\#3&#4\end{pmatrix}}

\newcommand{\matrice}{\begin{pmatrix}}
\newcommand{\ok}{\end{pmatrix}}

\newcommand{\scal}[2]{\langle{#1},{#2}\rangle}

\newcommand{\abs}[1]{\lvert{#1}\rvert}

\newcommand{\reals}{{\mathbb R}}

\newcommand{\real}[1]{{\mathbb R}^{#1}}

\newcommand{\bd}{\partial}

\newcommand{\derive}[2]{\dfrac{\bd #1}{\bd#2}}
\newcommand{\deriven}[3]{\dfrac{\bd^{#1}#2}{{\bd#3}^{#1}}}

\newcommand{\R}{\mathbb R}

\begin{document}

\maketitle

\noindent
{\bf Abstract.} We discuss isoperimetric inequalities for the magnetic Laplacian on  bounded domains of $\mathbb R^2$ endowed with an Aharonov-Bohm potential. When the flux of the potential around the pole is not an integer, the lowest eigenvalue for the Neumann and the Steklov problems is positive. We establish isoperimetric inequalitites for the lowest eigenvalue in the spirit of the classical inequalities of Szeg\"o-Weinberger, Brock and Weinstock, the model domain being a disk with the pole at its center. We consider more generally domains in the plane endowed with a rotationally invariant metric, which include the spherical and the hyperbolic case.

\vspace{11pt}

\noindent
{\bf Keywords:}  Magnetic Laplacian, Aharonov-Bohm magnetic potential, ground state, Neumann problem, Steklov problem, reverse Faber-Krahn inequality

\vspace{6pt}
\noindent
{\bf 2020 Mathematics Subject Classification:} 35J10, 35P15, 49Rxx, 58J50, 81Q10
%


\section{Introduction} The question of the \emph{isoperimetric inequalities} for the eigenvalues of the Laplacian (in particular for the first nonzero eigenvalue) is a long standing problem. Let us give a short and partial summary in the case of bounded domains of the Euclidean space which will be the main topic of the present paper. It began with the celebrated Faber-Krahn inequality \cite{Kr}: for  Dirichlet boundary conditions, among the open bounded domains of given volume, the {\it first} eigenvalue is minimized by the ball. For spaces of constant curvature the result can be found in \cite{chavel}. For Neumann boundary conditions, among the bounded open domains of given volume with Lipschitz boundary, the {\it second} eigenvalue (i.e., the first nonzero) is maximized by the ball. This is the Szeg\"o-Weinberger inequality \cite{szego_1,weinberger}. This inequality has been extended to bounded domains in spaces of constant curvature by Ashbaugh and Benguria \cite{ash_beng} (see also \cite{bandle}). For  Robin boundary conditions with positive parameter, the ball also realizes the minimum \cite{Da}. For other operators, similar results exist. For the Steklov problem, the {\it second} eigenvalue (the first nonzero) is maximized by the ball among the bounded open domains of given volume with Lipschitz boundary. This is the inequality of Brock \cite{Br}. However, if we consider the domains of $\R^2$ with boundary of given length, the second eigenvalue is maximized by the disk only among the simply connected domains. This is the inequality of Weinstock \cite{weinstock}. There exist annuli with larger second eigenvalue. Again, we refer to \cite{bandle,chavel} for more discussion and generalizations. Note that, even if we will not go in this direction, the maximization or minimization of higher eigenvalues is intensively studied, see for example \cite{BuHe2019} for the second nonzero eigenvalue of the Neumann problem and \cite{GiLa2021} for the third eigenvalue of the Robin problem.

\smallskip
In this paper we will be mainly concerned with the Neumann problem for the Aharonov-Bohm magnetic Laplacian on domains of $\mathbb R^2$ (see \eqref{A0}-\eqref{AB_N}) and with the corresponding Steklov problem (see \eqref{A0}-\eqref{AB}). In the case of the Neumann problem \eqref{AB_N}, we will also consider the Aharonov-Bohm magnetic Laplacian on domains of surfaces of revolution (in particular, the standard sphere $\mathbb S^2$ and the standard hyperbolic space $\mathbb H^2$). Most of the time, the first eigenvalue of this kind of problem is strictly positive and its study difficult. For example, for the magnetic Laplacian with constant non zero magnetic field and Dirichlet boundary condition in $\mathbb R^2$, it is known that the first eigenvalue is minimized by the disk among open domain of given area: this was shown in \cite{Er1} and the proof is quite involved. To our knowledge, a similar result is not known in $\mathbb S^2$ or $\mathbb H^2$. 
However, for the magnetic Laplacian with constant magnetic field and magnetic Neumann boundary condition, it is no longer true that the disk maximizes the first eigenvalue: even for simply connected domains, the question is open, see \cite[Question 1, Remark 2.4 and Proposition 3.3]{FH2}. More information can be found also in \cite[\S 4 and \S 5]{FH_book}. Still for the case of constant magnetic field, we mention \cite{laugesen1,laugesen2} for bounds on Dirichlet and Neumann eigenvalues of certain families of domains, and \cite{loto} for an isoperimetric inequality for the Robin problem.

\smallskip 

In the case of $\mathbb R^2$, we show that, among all domains of given area, the disk with the singularity of the magnetic field at the center is the unique maximizer of the {\it first} eigenvalue, which is positive provided that the flux is not an integer. This is a reverse Faber-Krahn inequality, that we obtain in the spirit of Szeg\"o-Weinberger \cite{szego_1,weinberger}. For the Steklov problem we prove two isoperimetric inequalities for the {\it first} eigenvalue, which, again, is positive if the flux is not an integer. These correspond to the inequalities of Weinstock and of Brock \cite{Br,weinstock}.

\smallskip

For the Neumann problem, we obtain similar results for domain of $\mathbb S^2$ and $\mathbb H^2$. The result will be a consequence of a general isoperimetric inequality for the Schr\"odinger operator on a manifold of revolution with radial, non-negative, and radially decreasing potential, that we will prove here (see Theorem \ref{sch}).

\smallskip

\smallskip We finally remark that the Faber-Krahn inequality for the magnetic Dirichlet problem with Aharonov-Bohm potential is trivial: the first eigenvalue is minimized by that of the usual Laplacian on the disk, among all bounded domains of given area.
 
\section{Notation and statement of results}

\medskip

Let $\Omega$ be a smooth bounded domain of $\mathbb R^2$ with a distinguished point $x_0=(a,b)$ and consider the one-form
 \begin{equation}\label{A0}
 A_0=-\frac{x_2-b}{(x_1-a)^2+(x_2-b)^2}dx_1+\frac{x_1-a}{(x_1-a)^2+(x_2-b)^2}dx_2.
 \end{equation}
 The one-form $A_{x_0,\nu}=\nu A_0$ will be called {\it Aharonov-Bohm potential with pole $x_0$ and flux $\nu$}. 
 Note that $A$ is smooth, closed, co-closed (hence harmonic) on $\real 2\setminus \{x_0\}$, and is singular at $x_0$; it gives rise to a zero magnetic field ($B=d{A_{x_0,\nu}}=0$).  Let $\Delta_{A_{x_0,\nu}}$ be the magnetic Laplacian with potential $A_{x_0,\nu}$: it is the operator 
$$
\Delta_{A_{x_0,\nu}} u=\Delta u+|{A_{x_0,\nu}}|^2u+2i\langle\nabla u, {A_{x_0,\nu}} \rangle
$$
acting on complex valued functions $u$ (the sign convention is that $\Delta u=-\sum_j \partial^2_{x_jx_j}u$). Of course, we can always assume that $x_0$ is the origin. 

\medskip

We will also consider the case when the ambient space is a two-dimensional manifold of revolution $(M,g)$ with pole $x_0$ and polar coordinates $(r,t)$, where $r$ is the distance to $x_0$. In this case, we consider the form $A_0=dt$, which is harmonic (closed and co-closed), and with flux $1$ around $x_0$.

\medskip

In this paper, we consider the eigenvalue problem for $\Delta_{A_{x_0,\nu}}$ with magnetic Neumann conditions:
\begin{equation}\label{AB_N}
\begin{cases}
\Delta_{A_{x_0,\nu}} u=\lambda u\,, & {\rm in\ } \Omega,\\
\langle\nabla u -iu {A_{x_0,\nu}},N\rangle=0\,, & {\rm on\ }\partial\Omega,
\end{cases}
\end{equation}

and also the magnetic Steklov eigenvalue problem:
\begin{equation}\label{AB}
\begin{cases}
\Delta_{A_{x_0,\nu}} u=0\,, & {\rm in\ } \Omega,\\
\langle\nabla u -iu {A_{x_0,\nu}},N\rangle=\sigma u\,, & {\rm on\ }\partial\Omega.
\end{cases}
\end{equation}

Here $N$ is the outer unit normal to $\partial\Omega$. With abuse of notation we still denote by ${A_{x_0,\nu}}$ the potential dual to the $1$-form $A_{x_0,\nu}$. We also denote by $\nabla^{A_{x_0,\nu}}u$ the vector field 
$$
\nabla^{A_{x_0,\nu}}u=\nabla u-iu{A_{x_0,\nu}}
$$
which is called {\it magnetic gradient}. Therefore, the magnetic  Neumann condition reads $\scal{\nabla^{A_{x_0,\nu}}u}{N}=0$, while the magnetic Steklov condition is $\scal{\nabla^{A_{x_0,\nu}}u}{N}=\sigma u$. 

\medskip

We will prove in Appendix \ref{functional} that each of these two problems admits an infinite discrete sequence of eigenvalues of finite multiplicity. We will denote by $\lambda_1(\Omega,A_{x_0,\nu})$ the first eigenvalue of Problem \eqref{AB_N} and by $\sigma_1(\Omega,A_{x_0,\nu})$ the first eigenvalue of Problem \eqref{AB}. These two eigenvalues are  non-negative for all $\nu\in\mathbb R$  and are strictly positive if and only if $\nu \not \in \mathbb Z$ (in particular, when $x_0\in\Omega^c$ and the flux of $A_0$ is $0$ in $\Omega$, we set $\nu=0$); in particular, when $\nu\in\mathbb Z$, the two spectra of problems \eqref{AB_N} and \eqref{AB} reduce to the corresponding spectra of the Laplacian $\Delta$  (i.e., when $A_{x_0,\nu}=0$, see Appendix \ref{functional}).

\medskip

In the sequel, we will often suppose that $\nu \not \in \mathbb Z$, so $\lambda_1(\Omega,A_{x_0,\nu})$ and $\sigma_1(\Omega,A_{x_0,\nu})$ are both positive.

The first result is a reverse Faber-Krahn inequality for the first eigenvalue of the Neumann problem whose proof is based on the well-known Szeg\"o-Weinberger approach \cite{szego_1,weinberger}. 

\smallskip

Through all the paper, by $|\Omega|$ we denote the Lebesgue measure of a smooth bounded domain $\Omega$, and by $|\partial\Omega|$ the length of its boundary.

\begin{thm} \label{ThmNeumann}Let $\Omega$ be a smooth bounded domain in $\mathbb R^2$ or $\mathbb H^2$, and let $A_{x_0,\nu}$ be the Aharonov-Bohm potential with pole at $x_0$ and flux $\nu$. Let $B(x_0,R)$ be the disk with center $x_0$ and radius $R$ such that $\vert B(x_0,R)\vert =\vert \Omega \vert$. Then
\begin{equation} \label{Neumann}
\lambda_1(\Omega,A_{x_0,\nu}) \le \lambda_1(B(x_0,R),A_{x_0,\nu});
\end{equation}
if $\nu\notin\mathbb Z$, equality holds if and only if $\Omega=B(x_0,R)$.
\end{thm}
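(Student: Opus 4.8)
The plan is to run the Szeg\"o--Weinberger scheme, but exploiting the fact that here $\lambda_1(\Omega,A_{x_0,\nu})$ is the \emph{ground state}: since $\nu\notin\mathbb{Z}$ it is a genuine positive eigenvalue, so I may bound it from above by the Rayleigh quotient of \emph{any} admissible test function, with no orthogonality (center of mass) constraint of the kind needed classically for the second Neumann eigenvalue. Write $A=A_{x_0,\nu}$ and place the pole at the origin. On the disk $B=B(x_0,R)$ rotational invariance lets me separate variables: the eigenfunctions are $f(r)e^{ikt}$, $k\in\mathbb{Z}$, with effective angular momentum $m=k-\nu$, and the radial factor solves the Bessel-type ODE $-f''-\tfrac1r f'+\tfrac{m^2}{r^2}f=\lambda f$ together with the magnetic Neumann condition $f'(R)=0$. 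The ground state is obtained for the integer $k_0$ nearest to $\nu$, so $|m|=|k_0-\nu|\le\tfrac12<1$; the corresponding $f=J_{|m|}(\sqrt{\lambda}\,r)$ (with $\lambda=\lambda_1(B,A)$) is nonnegative and nondecreasing on $[0,R]$, because $\sqrt{\lambda}\,R$ is the first critical point of $J_{|m|}$.

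First I would build the test function on $\Omega$ by radial extension: set $G(r)=f(r)$ for $r\le R$ and $G(r)=f(R)$ for $r>R$, and take $\tilde u=G(r)e^{ik_0 t}$. A direct computation of the magnetic gradient in the orthonormal polar frame gives $|\nabla^A\tilde u|^2=G'(r)^2+\tfrac{m^2}{r^2}G(r)^2$ and $|\tilde u|^2=G(r)^2$, so the Rayleigh quotient reduces to radial integrals and the desired bound $\lambda_1(\Omega,A)\le\lambda$ is equivalent to $\int_\Omega W\,dx\le 0$, where $W(r)=G'(r)^2+\tfrac{m^2}{r^2}G(r)^2-\lambda\,G(r)^2$. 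The eigenvalue relation on $B$ (integration by parts, whose boundary term is killed by $f'(R)=0$) gives exactly $\int_B W\,dx=0$.

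The crux --- and the step I expect to be the main obstacle --- is the monotonicity of $W$. Using the ODE to eliminate $f''$ I would obtain, on $[0,R]$,
\[
W'(r)=-\frac{2}{r}\Big[\big(f'-\tfrac{m^2}{r}f\big)^2+\frac{m^2(1-m^2)}{r^2}\,f^2\Big]-4\lambda\, f f',
\]
while for $r>R$ simply $W'(r)=-\tfrac{2m^2}{r^3}f(R)^2$. Because $|m|<1$ the coefficient $m^2(1-m^2)$ is nonnegative, and since $f,f'\ge 0$ on $[0,R]$ every term above is $\le 0$; hence $W$ is nonincreasing on $[0,\infty)$. This is precisely where the nearest-integer choice $|k_0-\nu|<1$ is indispensable. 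With $W$ monotone the comparison is standard: writing $\int_\Omega W=\int_{\Omega\setminus B}W-\int_{B\setminus\Omega}W$ (using $\int_B W=0$) and $|\Omega\setminus B|=|B\setminus\Omega|$ (equal areas), the value $W(R)$ bounds $W$ from above on $\Omega\setminus B$ (where $r\ge R$) and from below on $B\setminus\Omega$ (where $r\le R$), so that $\int_{\Omega\setminus B}W\le W(R)|\Omega\setminus B|=W(R)|B\setminus\Omega|\le\int_{B\setminus\Omega}W$, giving $\int_\Omega W\le 0$ and hence \eqref{Neumann}.

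For the equality case I would use that, as $m\neq 0$, one has $W'(r)=-\tfrac{2m^2}{r^3}f(R)^2<0$ strictly for $r>R$: if $|\Omega\setminus B|>0$ the first inequality above is strict, so equality forces $|\Omega\setminus B|=0$ and therefore $\Omega=B(x_0,R)$ up to a null set. Finally, for $\mathbb{H}^2$ (and more generally a surface of revolution with metric $dr^2+\theta(r)^2dt^2$) the same separation of variables turns the problem into a one-dimensional Schr\"odinger operator with radial, nonnegative, radially decreasing potential $\tfrac{m^2}{\theta(r)^2}$, and the inequality then follows from the general Theorem~\ref{sch}, the monotonicity computation above being exactly the flat model case $\theta(r)=r$.
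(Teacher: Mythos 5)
Your proposal is correct and is essentially the paper's own Szeg\"o--Weinberger argument: after reducing to effective flux $|m|=\inf_{k\in\mathbb Z}|\nu-k|\le\tfrac12$, you radially extend the centered disk's ground state, exploit monotonicity of an energy density, and conclude by the equal-measure rearrangement comparison, with the same strictness mechanism for the equality case; your single monotone quantity $W=G'^2+\tfrac{m^2}{r^2}G^2-\lambda G^2$ merely merges the paper's two separate comparisons (the $L^2$ estimate \eqref{chain1} and the energy estimate \eqref{chain2} via Lemma \ref{lem_rev}, which likewise discards the $-2\lambda uu'$ term by the sign of $uu'$), and your condition $m^2(1-m^2)\ge0$ is exactly the paper's hypothesis $\theta'^2\ge\nu^2$ in the flat case $\theta(r)=r$. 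For $\mathbb H^2$ you invoke Theorem \ref{sch} precisely as the paper does (Theorem \ref{ab} and Corollary \ref{plane_AB_N_1}), and the one fact you assert without proof---that the disk's ground state occurs at the nearest integer $k_0$---is the standard potential-comparison statement the paper records as Lemma \ref{lemma_N}\,$ii)$.
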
     

We will observe that Theorem \ref{ThmNeumann} extends to domains in the manifold $(\mathbb R^2,g)$ where $g$ is a complete, non-positively curved, rotationally invariant metric around $x_0$, the pole of the magnetic potential (see Section \ref{app_AB_Rev}).

This theorem will be a consequence of a more general result about an isoperimetric inequality for Schr\"{o}dinger operators on revolution manifolds with pole $x_0$ and radial potential $V$ that we will present in Section \ref{Schroedinger}. In fact, Theorem \ref{ThmNeumann} holds also in this setting, under suitable hypothesis on the function describing the density of the Riemannian metric in standard polar coordinates. 

\smallskip The case of the sphere $\mathbb S^2$ is more involved. We are able to show a similar result to Theorem \ref{ThmNeumann} only if the domain is contained in a hemisphere centered at the pole $x_0$.

\begin{thm} \label{ThmNeumannS}Let $\Omega$ be a smooth domain contained in a hemisphere centered at $x_0$, and let $A_{x_0,\nu}$ be the Aharonov-Bohm potential with pole at $x_0$ and flux $\nu$. Let $B(x_0,R)$ be the disk in $\mathbb S^2$ with center $x_0$ and radius $R$ such that $\vert B(x_0,R)\vert =\vert \Omega \vert$. Then
\begin{equation} \label{NeumannS}
\lambda_1(\Omega,A_{x_0,\nu}) \le \lambda_1(B(x_0,R),A_{x_0,\nu});
\end{equation}
if $\nu\notin\mathbb Z$, equality holds if and only if $\Omega=B(x_0,R)$.
\end{thm}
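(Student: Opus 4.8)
The plan is to deduce Theorem \ref{ThmNeumannS} from the Schr\"odinger isoperimetric inequality of Theorem \ref{sch}, exactly as in the Euclidean and hyperbolic cases of Theorem \ref{ThmNeumann}; the only new ingredient is to check that the monotonicity hypotheses survive on a hemisphere. Write the round metric in geodesic polar coordinates centred at $x_0$ as $g=dr^2+\sin^2r\,dt^2$, so that $A_{x_0,\nu}=\nu\,dt$. On the centred disk $B=B(x_0,R)$ the magnetic Neumann problem \eqref{AB_N} separates: in the angular mode $e^{ikt}$, $k\in\mathbb Z$, the radial part $f$ solves $L_kf=\mu f$ with $f'(R)=0$ and finite energy at $r=0$, where
\[
L_kf:=-\frac{1}{\sin r}(\sin r\,f')'+\frac{(k-\nu)^2}{\sin^2 r}\,f .
\]
Hence $\lambda_1(B,A_{x_0,\nu})=\min_{k\in\mathbb Z}\mu_k=:\mu_{k_0}$, attained by $G(r)e^{ik_0t}$ with $G\ge 0$ radial.

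The first step is a pointwise identity matching the magnetic Rayleigh quotient with a Schr\"odinger one. Put $V(r):=(k_0-\nu)^2/\sin^2 r\ge 0$. A direct computation in the orthonormal frame $\{\partial_r,\tfrac1{\sin r}\partial_t\}$ shows that for \emph{every} real function $w$ on $\Omega$ the competitor $u=w\,e^{ik_0t}$ satisfies
\[
\abs{\nabla^{A_{x_0,\nu}}u}^2=\abs{\nabla w}^2+V\,w^2,\qquad \abs{u}^2=w^2 .
\]
Since $k_0\in\mathbb Z$ the factor $e^{ik_0t}$ is single-valued, so $u$ is admissible in \eqref{AB_N} regardless of whether $x_0\in\Omega$. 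Minimising the resulting quotient over $w$ yields $\lambda_1(\Omega,A_{x_0,\nu})\le \lambda_1^{V}(\Omega)$, where $\lambda_1^{V}$ denotes the first Neumann eigenvalue of the real Schr\"odinger operator $-\Delta+V$ (real and complex ground states coincide).

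Next I would identify the two eigenvalues on the model disk. Because $V$ is radial and non-negative, the ground state of $-\Delta+V$ on $B$ is positive and rotationally invariant, hence radial; its eigenvalue is the first eigenvalue of the $m=0$ radial operator $-\tfrac{1}{\sin r}(\sin r\,f')'+Vf$, which is exactly $L_{k_0}$ above. Thus $\lambda_1^{V}(B)=\mu_{k_0}=\lambda_1(B,A_{x_0,\nu})$. To apply Theorem \ref{sch} it remains to verify its hypotheses for $V$. The constraint $\abs{\Omega}=\abs{B(x_0,R)}$ together with $\Omega$ lying in the hemisphere $\{r\le\pi/2\}$ gives $2\pi(1-\cos R)=\abs{\Omega}\le 2\pi$, hence $R\le \pi/2$, and every point of $\Omega$ is at distance $r\le\pi/2$ from $x_0$. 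On $[0,\pi/2]$ the density $\sin r$ is increasing, so $V$ is non-increasing and meets the conditions under which Theorem \ref{sch} was established for $\mathbb R^2$ and $\mathbb H^2$. Therefore $\lambda_1^{V}(\Omega)\le\lambda_1^{V}(B)$, and chaining the three relations gives
\[
\lambda_1(\Omega,A_{x_0,\nu})\le \lambda_1^{V}(\Omega)\le \lambda_1^{V}(B)=\lambda_1(B,A_{x_0,\nu}).
\]

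The genuine obstacle is precisely the monotonicity of $V$ that Theorem \ref{sch} requires: on the sphere $\sin r$ increases only up to the equator, so $V$ is decreasing exactly on $[0,\pi/2]$ and the comparison collapses for domains reaching past $r=\pi/2$; this is what confines the statement to a hemisphere, and it is the main difference from the flat and hyperbolic cases, where $\theta=r,\sinh r$ are increasing throughout. For the equality statement, when $\nu\notin\mathbb Z$ one has $(k_0-\nu)^2>0$, so $V$ is \emph{strictly} decreasing on $(0,\pi/2]$; the rigidity part of Theorem \ref{sch} then forces equality only when $\Omega$ coincides with $B(x_0,R)$ up to a set of measure zero, i.e. $\Omega=B(x_0,R)$.
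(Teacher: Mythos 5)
Your reduction to a Schr\"odinger problem is sound (the identity $\abs{\nabla^{A_{x_0,\nu}}(we^{ik_0t})}^2=\abs{\nabla w}^2+Vw^2$ for real $w$ is correct and is essentially the paper's gauge-invariance reduction to $\nu\in(0,\frac12]$ with $k_0=0$), but the step where you invoke Theorem \ref{sch} contains a genuine gap. Theorem \ref{sch} does not merely require $V$ to be non-increasing (Assumption 2); it also requires Assumption 3, namely $\theta'V'+2V^2\theta\leq 0$ on $(0,R)$, which for $V=\nu^2/\theta^2$ is equivalent to $\theta'^2\geq\nu^2$, i.e.\ $\cos^2 r\geq\nu^2$ on the sphere. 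This fails on $(\arccos\nu,R)$ as soon as $R>\arccos\nu$: for $\nu=\frac12$ it fails for every $R>\pi/3$, so your argument covers only domains with $|\Omega|\leq|B(x_0,\pi/3)|$, not the full hemisphere. This is exactly why the paper states that a \emph{direct} application of Theorem \ref{ab} is possible only under that smaller volume constraint. You even identify the wrong obstacle: it is not the monotonicity of $V$ (which the hemisphere hypothesis does guarantee) but the monotonicity of $F(r)=u'(r)^2+V(r)u(r)^2$, for which Lemma \ref{lem_rev} needs the strictly stronger condition $\theta'^2\geq\nu^2$.

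The paper closes this gap with Theorem \ref{technical_sphere}, which proves $F'\leq 0$ on $(0,R)$ under the alternative hypothesis $b)$: $\bar\lambda\theta^2-\nu^2+\nu^2(\theta')^2+\nu\theta\theta''\geq 0$, where $\bar\lambda$ is the first eigenvalue at $R=\bar R$. The proof is a genuinely new ODE argument absent from your proposal: one studies the logarithmic quantity $q(r)=\theta u'/u$, shows $q'\leq 0$ and $0<q<\nu$ via the Riccati equation $q'=-\lambda\theta+(\nu^2-q^2)/\theta$ together with a convexity-in-$\theta$ argument, and then establishes $q\leq(\nu+\delta)\theta'$ by the comparison Lemma \ref{tec}, using that $\lambda>\bar\lambda$ for $R<\bar R$ (point $iii)$ of Theorem \ref{technical_sphere_0}). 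On $\mathbb S^2$ one has $\bar\lambda=\nu(\nu+1)$ (since $\theta^\nu=\sin^\nu r$ solves the $k=0$ equation at $\bar R=\pi/2$), and the expression in $b)$ is then \emph{identically zero}, so the condition holds exactly up to $R=\pi/2$ with no slack — which explains both why the hemisphere is the natural threshold and why the naive Assumption 3 route cannot reach it. Without this (or an equivalent substitute proving $F'\leq 0$ past $r=\arccos\nu$), your chain $\lambda_1^V(\Omega)\leq\lambda_1^V(B)$ is unjustified for the stated range of $R$, and the equality analysis inherits the same gap.
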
     

Note that the analogous result for the second eigenvalue of the Neumann Laplacian is proved in \cite{ash_beng}. However, for simply connected domains we can do better.

\begin{thm} \label{ThmNeumannSS}Let $\Omega$ be a smooth simply connected domain in $\mathbb S^2$ with $|\Omega|\leq 2\pi$ and $-x_0\notin\Omega$, and let $A_{x_0,\nu}$ be the Aharonov-Bohm potential with pole at $x_0$ and flux $\nu$. Let $B(x_0,R)$ be the disk in $\mathbb S^2$ with center $x_0$ and radius $R$ such that $\vert B(x_0,R)\vert =\vert \Omega \vert$. Then
\begin{equation} \label{NeumannSS}
\lambda_1(\Omega,A_{x_0,\nu}) \le \lambda_1(B(x_0,R),A_{x_0,\nu});
\end{equation}
if $\nu\notin\mathbb Z$, equality holds if and only if $\Omega=B(x_0,R)$.
\end{thm}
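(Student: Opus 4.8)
The plan is to prove the inequality by the variational (reverse Faber--Krahn) method, exhibiting a single test function on $\Omega$ whose Rayleigh quotient is at most $\lambda_1(B(x_0,R),A_{x_0,\nu})$; since $\lambda_1$ is the bottom of the spectrum, no orthogonality constraint is needed. First I would remove the integer part of the flux by the gauge transformation $u\mapsto e^{ik_0t}u$, so that the relevant angular datum is $\alpha:=\mathrm{dist}(\nu,\mathbb Z)\in(0,1/2]$, and recall that on the model cap $B(x_0,R)$ the ground state has the separated form $G(r)e^{ik_0t}$, where the radial profile $G$ solves $-\tfrac1{\sin r}(\sin r\,G')'+\tfrac{\alpha^2}{\sin^2 r}G=\lambda_1(B)G$ with $G(0)=0$, $G'(R)=0$, and $G>0$ increasing on $(0,R)$ (here $\lambda_1(B):=\lambda_1(B(x_0,R),A_{x_0,\nu})$). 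Transplanting $u=g(r)e^{ik_0t}$ to $\Omega$ (with $g=G$ on $[0,R]$ and a radial extension to be chosen for $r>R$) and computing $|\nabla^{A_{x_0,\nu}}u|^2=g'(r)^2+\tfrac{\alpha^2}{\sin^2 r}g(r)^2$, the theorem reduces to the single scalar inequality $\int_\Omega\Phi\,dV\le0$, where $\Phi(r)=g'(r)^2+\big(\tfrac{\alpha^2}{\sin^2 r}-\lambda_1(B)\big)g(r)^2$ is radial and satisfies $\int_{B(x_0,R)}\Phi\,dV=0$ by construction.

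The comparison $\int_\Omega\Phi\,dV\le\int_{B}\Phi\,dV=0$ would then follow from a layer-cake/rearrangement argument: writing $\int_\Omega\Phi\,dV=\int_0^\pi\Phi(r)\,\ell_\Omega(r)\,dr$ with $\ell_\Omega(r)$ the length of $\Omega\cap\{r=\mathrm{const}\}$, one has $\ell_\Omega\le 2\pi\sin r$ and $\int\ell_\Omega=|\Omega|=|B|$, so $\int_\Omega\Phi\,dV\le\Phi(R)\,(|\Omega|-|B|)+\int_B\Phi\,dV=0$ \emph{provided $\Phi$ is non-increasing in $r$}. This is exactly how Theorem \ref{sch} is applied on the hemisphere (Theorem \ref{ThmNeumannS}): there $\tfrac{\alpha^2}{\sin^2 r}$ is decreasing and the constant extension $g\equiv G(R)$ keeps $\Phi$ monotone. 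The main obstacle for the present statement is that, past the equator, $\tfrac{\alpha^2}{\sin^2 r}$ increases and blows up as $r\to\pi$; with the constant extension $\Phi$ is no longer monotone and indeed becomes large and positive near the antipode, so a domain with a thin tentacle approaching $-x_0$ would defeat the naive argument.

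To overcome this I would use the hypotheses---$\Omega$ simply connected, $-x_0\notin\Omega$, and $|\Omega|\le 2\pi$---to pass to a setting in which the relevant potential is \emph{globally} decreasing. Stereographic projection from $-x_0$ maps $\Omega$ conformally onto a bounded, simply connected planar domain $\tilde\Omega$ with $x_0\mapsto 0$; it is rotationally equivariant, so it preserves $A_{x_0,\nu}$ and its flux, and in dimension two the magnetic Dirichlet energy $\int|\nabla^{A_{x_0,\nu}}u|^2$ is conformally invariant. Hence $\lambda_1(\Omega,A_{x_0,\nu})$ equals the first eigenvalue of the weighted planar problem with density $\Lambda^2=4/(1+\rho^2)^2$, where the constraint $|\Omega|=|B|$ becomes equality of the weighted areas $\int\Lambda^2\,dx$, the model becomes the centered disk, and crucially $|\Omega|\le 2\pi$ corresponds to model radius $\le 1$ (cap contained in a hemisphere). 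In this planar picture the equator singularity is pushed to $\rho=\infty$, the radial potential $\alpha^2/\rho^2$ is decreasing on all of $(0,\infty)$, and the problem falls---after the Liouville-type reduction of the weighted radial operator to Schr\"odinger form---within the scope of the general isoperimetric inequality of Theorem \ref{sch} for revolution manifolds with non-negative, radially decreasing potential. I expect the delicate point to be verifying that the transformed comparison function stays non-increasing along the whole radial range (reconciling the decreasing potential with the decreasing conformal weight), which is where the bound $|\Omega|\le 2\pi$ and the boundedness of $\tilde\Omega$ enter. Finally, when $\nu\notin\mathbb Z$ one has $\alpha>0$, the rearrangement inequality is strict unless every geodesic circle meets $\Omega$ in a full circle up to radius $R$, giving equality if and only if $\Omega=B(x_0,R)$.
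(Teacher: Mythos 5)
Your opening moves track the paper's: gauge to $\nu\in(0,\frac12]$, use the positive increasing radial ground state on the cap (valid since $R\le\pi/2$, Theorem \ref{technical_sphere_0}), observe that the Weinberger extension fails past the equator, and project stereographically from $-x_0$ using conformal invariance of the magnetic energy. But your final step has a genuine gap: after projection you do \emph{not} land in the scope of Theorem \ref{sch}. The transformed problem has Euclidean magnetic energy in the numerator but \emph{weighted} mass $\int|u|^2\Lambda^2$, $\Lambda^2=4/(1+\rho^2)^2$, in the denominator; the plane with metric $\Lambda^2|dz|^2$ is just the sphere in new coordinates, so any ``Liouville-type reduction'' to a Schr\"odinger operator on a revolution manifold reproduces the original spherical problem and hence Theorem \ref{ab_S} with its hemisphere restriction --- nothing is gained. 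If instead you try to run the Weinberger symmetric-difference bookkeeping directly with the two mismatched measures, it breaks: the constraint is equality of \emph{weighted} areas, and since $\Lambda$ is decreasing this forces $|\tilde\Omega\cap\tilde B^c|_{\rm Leb}\ge|\tilde\Omega^c\cap\tilde B|_{\rm Leb}$, which is the wrong direction for absorbing the Lebesgue-measured energy of the constant extension outside $\tilde B$. A telling symptom is that your argument never actually uses simple connectivity, whereas the conclusion genuinely requires it (compare the failure of Weinstock-type bounds on non-simply-connected domains noted in the paper).

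The missing idea, and what the paper's proof of Theorem \ref{ab_S_szego} actually does, is Szeg\"o's conformal transplantation: simple connectivity supplies a Riemann map $g:\tilde B\to\tilde\Omega$ with $g(0)=0$, and the test function is $\hat u=u\circ f^{-1}\circ g^{-1}\circ f$ --- not a radial extension --- whose energy equals that of $u$ \emph{exactly} by Proposition \ref{cienergy}, leaving only the mass comparison $\int_\Omega\hat u^2\ge\int_{B(x_0,R)}u^2$. That comparison is proved by setting $a(r)=\int_{B_r}|g'|^2(\sigma\circ g)$, deriving via Cauchy--Schwarz and the spherical isoperimetric inequality $|\partial\Omega_r|^2\ge|\Omega_r|(4\pi-|\Omega_r|)$ the differential inequality $a'(r)\ge a(r)(4\pi-a(r))/(2\pi r)$, deducing $a(r)\le v(r)=4\pi r^2/(1+r^2)$ from the normalization $a(T)=v(T)$, and integrating by parts against the increasing radial profile $U=(u\circ f^{-1})^2$. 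The equality case then comes from equality in the isoperimetric inequality at every level, forcing each $\Omega_r$ to be a spherical cap and $\Omega=B(x_0,R)$; this replaces your ``every geodesic circle meets $\Omega$ in a full circle'' criterion, which belongs to the rearrangement scheme that does not close here.
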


The next result is the analogous of Brock's inequality \cite{Br} for the first  Steklov eigenvalue on planar domains:

\begin{thm} \label{ThmBrock}Let $\Omega$ be a smooth bounded domain in $\mathbb R^2$ and let $A_{x_0,\nu}$ be the Aharonov-Bohm potential with pole at $x_0$ and flux $\nu$. Let $B(x_0,R)$ be the disk with center $x_0$ and radius $R$ such that $\vert B(x_0,R)\vert =\vert \Omega \vert$. Then
\begin{equation} \label{Brock}
\sigma_1(\Omega, A_{x_0,\nu})\leq \sigma_1(B(x_0,R),A_{x_0,\nu})
=\dfrac{\sqrt{\pi}}{\sqrt{\abs{\Omega}}}\inf_{k\in\mathbb Z}\abs{\nu-k}.
\end{equation}
If $\nu\notin\mathbb Z$,  equality holds if and only if $\Omega=B(x_0,R)$.
\end{thm}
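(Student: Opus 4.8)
The plan is to combine a variational upper bound, obtained from a single well-chosen trial function, with two geometric inequalities: a rearrangement bound for an interior integral and a weighted isoperimetric inequality for a boundary integral. Throughout write $A=A_{x_0,\nu}$ and recall from the functional-analytic setting (Appendix \ref{functional}) that for $\nu\notin\mathbb Z$ the magnetic connection has nontrivial holonomy, so $\nabla^{A}u=0$ forces $u\equiv 0$; hence $\sigma_1(\Omega,A)>0$ and, crucially, it admits the \emph{unconstrained} variational characterization
\[
\sigma_1(\Omega,A)=\min_{u\neq 0}\frac{\int_\Omega\abs{\nabla^{A}u}^2\,dx}{\int_{\partial\Omega}\abs{u}^2\,ds}.
\]
I would first compute the right-hand side of \eqref{Brock} by separation of variables. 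In polar coordinates $(r,t)$ centred at $x_0$ one has $A_{x_0,\nu}=\nu\,dt$, so for $u=f(r)e^{ikt}$ the equation $\Delta_A u=0$ reduces to $-f''-r^{-1}f'+r^{-2}(k-\nu)^2f=0$, whose solution regular at the origin is $f(r)=r^{\abs{k-\nu}}$; the magnetic Steklov condition on $\partial B(x_0,R)$ then yields the eigenvalue $\abs{k-\nu}/R$. Setting $\alpha:=\inf_{k\in\mathbb Z}\abs{\nu-k}\in[0,\tfrac12]$ and letting $k_0$ be a nearest integer to $\nu$, the ground state on the disk is $r^{\alpha}e^{ik_0 t}$ with $\sigma_1(B(x_0,R),A)=\alpha/R$, which equals $\tfrac{\sqrt{\pi}}{\sqrt{\abs{\Omega}}}\inf_k\abs{\nu-k}$ because $\abs{B(x_0,R)}=\pi R^2=\abs{\Omega}$.

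The upper bound uses this disk ground state as a trial function on $\Omega$. Take $u=r^{\alpha}e^{ik_0 t}$ with $r=\abs{x-x_0}$; this is admissible since $\alpha>0$ makes $r^{2\alpha-2}$ integrable in dimension two, regardless of whether $x_0$ lies in $\Omega$. A direct computation gives $\abs{\nabla^{A}u}^2=2\alpha^2 r^{2\alpha-2}$ and $\abs{u}^2=r^{2\alpha}$, whence
\[
\sigma_1(\Omega,A)\le \frac{2\alpha^2\int_\Omega r^{2\alpha-2}\,dx}{\int_{\partial\Omega}r^{2\alpha}\,ds}.
\]
It therefore suffices to bound the numerator from above and the denominator from below by their values on $B(x_0,R)$, which multiply out exactly to $\alpha/R$.

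For the numerator, since $2\alpha-2<0$ the weight $r^{2\alpha-2}$ is radially decreasing, so by the Hardy--Littlewood inequality (equivalently, the bathtub principle) the centred disk maximises $\int_\Omega r^{2\alpha-2}\,dx$ among domains of area $\abs{\Omega}$, giving $\int_\Omega r^{2\alpha-2}\,dx\le \pi R^{2\alpha}/\alpha$. The essential point is the denominator, where I must establish the weighted isoperimetric inequality
\[
\int_{\partial\Omega}\abs{x-x_0}^{2\alpha}\,ds\ \ge\ \int_{\partial B(x_0,R)}\abs{x-x_0}^{2\alpha}\,ds=2\pi R^{2\alpha+1},
\]
with equality if and only if $\Omega=B(x_0,R)$. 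This is the main obstacle. It degenerates to the classical isoperimetric inequality at $\alpha=0$, and it cannot be obtained from a single divergence-theorem calibration, since the associated isoperimetric profile $\abs{\Omega}\mapsto 2\pi^{(1-2\alpha)/2}\abs{\Omega}^{\alpha+1/2}$ is nonlinear in $\abs{\Omega}$; indeed the natural calibration $X=\abs{x-x_0}^{2\alpha-1}(x-x_0)$ only yields $\int_{\partial\Omega}r^{2\alpha}\,ds\ge(2\alpha+1)\int_\Omega r^{2\alpha-1}\,dx$, whose right-hand side points the wrong way after rearrangement. I would instead invoke the weighted isoperimetric inequality for power weights $\abs{x}^{p}$ with $p\ge 0$ and Lebesgue-volume constraint, for which centred balls are the unique minimisers (in the spirit of Betta, Brock, Mercaldo and Posteraro), or reprove it by Schwarz symmetrisation of the radial distribution of $\partial\Omega$.

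Combining the two bounds gives $\sigma_1(\Omega,A)\le \alpha/R=\sigma_1(B(x_0,R),A)$, which is \eqref{Brock}. For rigidity when $\nu\notin\mathbb Z$ (so $\alpha>0$), equality along the chain forces equality in both the Hardy--Littlewood step and the weighted isoperimetric inequality, each of which already characterises $\Omega=B(x_0,R)$; conversely the disk realises equality. The half-integer case $\alpha=\tfrac12$ is handled identically, the only difference being that the disk ground state is two-dimensional and the relevant weight is $\abs{x-x_0}$.
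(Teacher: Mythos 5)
Your proposal is correct and follows essentially the same route as the paper: the disk ground state $r^{\alpha}$ (you keep the angular factor $e^{ik_0t}$ where the paper instead gauges to $\nu\in(0,\tfrac12]$ and uses the real radial $r^{\nu}$) as trial function in the unconstrained Rayleigh quotient, the bathtub/rearrangement bound $\int_{\Omega}r^{2\alpha-2}\le\int_{B(x_0,R)}r^{2\alpha-2}$ for the numerator, and the Betta--Brock--Mercaldo--Posteraro weighted isoperimetric inequality $\int_{\partial\Omega}r^{p}\ge 2\pi^{(1-p)/2}\abs{\Omega}^{(p+1)/2}$ (the paper's citation \cite{brock2}) with its uniqueness statement for the denominator and the rigidity case. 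The computations and the equality discussion match the paper's proof of Theorem \ref{thm:brock}.
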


\medskip

Finally, we prove the analogue of Weinstock's inequality \cite{weinstock}:

\begin{thm} \label{ThmWeinstock}
Let $\Omega$ be a smooth bounded and simply connected domain in $\mathbb R^2$ and let $A_{x_0,\nu}$ be the Aharonov-Bohm potential with pole at $x_0$ and flux $\nu$.  Let $B(x_0,R)$ be the disk with center $x_0$ and radius $R$ such that $\vert \partial B(x_0,R)\vert =\vert \partial\Omega \vert$. Then
\begin{equation} \label{Weinstock}
\sigma_1(\Omega,A_{x_0,\nu})\leq \sigma_1(B(x_0,R),A_{x_0,\nu})= \dfrac{2\pi}{\abs{\bd\Omega}}\inf_{k\in\mathbb Z}\abs{\nu-k}.
\end{equation}
If $\nu\notin\mathbb Z$,  equality holds if and only if $\Omega=B(x_0,R)$.
\end{thm}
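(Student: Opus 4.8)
The plan is to reduce the problem on $\Omega$ to the model disk by combining a Riemann map with a gauge transformation, exploiting the conformal invariance of the magnetic Dirichlet energy in dimension two. Since $\nu\notin\mathbb Z$ makes $\sigma_1(\Omega,A_{x_0,\nu})>0$ the bottom of the spectrum (Appendix \ref{functional}), it is characterized variationally with no orthogonality constraint:
\[
\sigma_1(\Omega,A_{x_0,\nu})=\inf_{u\neq 0}\frac{\int_\Omega|\nabla^{A_{x_0,\nu}}u|^2\,dx}{\int_{\bd\Omega}|u|^2\,ds}.
\]
This is the crucial simplification with respect to the classical Weinstock inequality: there is no constant in the kernel, so no Hersch-type balancing of the conformal map is required. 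First I would fix, by the Riemann mapping theorem, a biholomorphism $\Phi\colon\Omega\to\mathbb D$ onto the unit disk with $\Phi(x_0)=0$; since $\bd\Omega$ is smooth, $\Phi$ extends to a diffeomorphism of the closures, so pulled-back functions are admissible test functions and have well-defined boundary traces.

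The heart of the argument is a gauge/conformal reduction. Writing $A_0=d\theta_{x_0}$ for the angular form around $x_0$, the pullback $\Phi^*(d\theta_{\mathbb D})$ of the angular form of $\mathbb D$ is closed on $\Omega\setminus\{x_0\}$ and, because $\Phi$ is conformal with $\Phi(x_0)=0$, has winding number one around $x_0$, just like $A_0$. Hence $\Phi^*(d\theta_{\mathbb D})-A_0$ is closed with vanishing period, so exact on $\Omega\setminus\{x_0\}$ (here simple connectivity of $\Omega$ enters), and its logarithmic singular parts cancel so that it extends smoothly across $x_0$: $\Phi^*(d\theta_{\mathbb D})=A_0+d\psi$ with $\psi\in C^\infty(\Omega)$. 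Thus $\nu\Phi^*(d\theta_{\mathbb D})=A_{x_0,\nu}+d(\nu\psi)$ is gauge-equivalent to $A_{x_0,\nu}$. Let $\tilde v$ be the ground state on $\mathbb D$ for the potential $\nu\,d\theta_{\mathbb D}$, which by separation of variables (cf.\ the disk computation in Theorem \ref{ThmBrock}) is $\tilde v=r^{m}e^{ik_0\theta}$ with $m=\inf_{k}|\nu-k|=|\nu-k_0|$; note $|\tilde v|\equiv 1$ on $\bd\mathbb D$ and $\int_{\mathbb D}|\nabla^{\nu d\theta_{\mathbb D}}\tilde v|^2=2\pi m$. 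I would then use as test function on $\Omega$ the function $v=e^{-i\nu\psi}\,(\tilde v\circ\Phi)$. Gauge covariance gives $|\nabla^{A_{x_0,\nu}}v|=|\nabla^{\nu\Phi^*(d\theta_{\mathbb D})}(\tilde v\circ\Phi)|$ pointwise, and the conformal invariance in dimension two of the energy $\int|\nabla^{A}u|^2$ yields $\int_\Omega|\nabla^{A_{x_0,\nu}}v|^2=\int_{\mathbb D}|\nabla^{\nu d\theta_{\mathbb D}}\tilde v|^2=2\pi m$. On the boundary $|v|=|\tilde v\circ\Phi|\equiv 1$, so $\int_{\bd\Omega}|v|^2\,ds=|\bd\Omega|$. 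Inserting this into the Rayleigh quotient gives $\sigma_1(\Omega,A_{x_0,\nu})\le 2\pi m/|\bd\Omega|$, which is exactly $\sigma_1(B(x_0,R),A_{x_0,\nu})$ once $|\bd B(x_0,R)|=|\bd\Omega|$.

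For the rigidity, equality forces $v$ to realize the infimum, hence to be a ground-state eigenfunction; in particular it satisfies $\scal{\nabla^{A_{x_0,\nu}}v}{N}=\sigma v$ on $\bd\Omega$ with $\sigma=2\pi m/|\bd\Omega|$. Computing the magnetic normal derivative through the conformal map (angles are preserved, so the normal maps to the normal, and lengths scale by $|\Psi'|$ with $\Psi=\Phi^{-1}$), and using that $\tilde v$ has magnetic normal derivative $m\tilde v$ on $\bd\mathbb D$, one finds $\scal{\nabla^{A_{x_0,\nu}}v}{N}=m\,|\Psi'|^{-1}v$ on $\bd\Omega$. Since $|v|=1$ there, the boundary condition forces $|\Psi'|\equiv m/\sigma=\const$ on $\bd\mathbb D$. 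As $\log|\Psi'|$ is harmonic in $\mathbb D$ and constant on $\bd\mathbb D$, it is constant, so $\Psi'$ is constant and $\Psi$ is affine; together with $\Psi(0)=x_0$ this gives $\Omega=B(x_0,R)$. Conversely, equality trivially holds for such disks.

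The main obstacle, and the step I would treat most carefully, is the gauge/conformal reduction: verifying that $\Phi^*(d\theta_{\mathbb D})-A_0$ is exact with a potential $\psi$ that is smooth up to and across the pole $x_0$, and that the magnetic energy is genuinely conformally invariant so that the numerator transfers to the disk with no surviving conformal factor. The boundary regularity of the Riemann map—needed to make $v$ admissible and to justify the normal-derivative computation in the rigidity step—is the other point that requires care.
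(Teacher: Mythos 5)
Your proposal is correct and follows essentially the same route as the paper: a Riemann map $\Phi$ fixing the pole, gauge equivalence of $A_{x_0,\nu}$ with $\Phi^{\star}$ of the disk's Aharonov--Bohm potential, conformal invariance of the magnetic energy, and the transplanted ground state $r^{m}$ (with $m=\inf_{k\in\mathbb Z}|\nu-k|$, the paper's $\nu\in(0,\tfrac12]$ after gauge reduction) as test function, whose modulus is $1$ on $\partial\Omega$. In fact your write-up goes slightly beyond the paper's, which stops at the inequality: you verify explicitly that the gauge function $\psi$ is smooth across the pole and you supply the rigidity argument (equality forces $|\Psi'|$ constant on $\partial\mathbb D$, hence $\Psi$ affine and $\Omega$ a disk centered at $x_0$), both of which are sound.
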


Note that the upper bounds of Theorems \ref{ThmBrock} and \ref{ThmWeinstock} correctly reduce to zero whenever the flux is an integer.   

\smallskip

We stated Theorem \ref{ThmWeinstock} for planar, simply connected domains, however it extends to any Riemannian surface with boundary. 

\medskip

We conclude this section with a few remarks. It is natural to ask what happens for the second eigenvalue of \eqref{AB_N} and \eqref{AB}, at least on planar domains. One immediately observes that Theorems \ref{ThmNeumann} and \ref{ThmBrock} no longer hold, in the sense that the ball punctured at the origin is not a maximiser. In fact, $\lambda_2(B(x_0,R),A_{x_0,\nu})=\frac{(z_{1-\inf_{k\in\mathbb Z}\abs{\nu-k},1}')^2}{R^2}<\frac{(z_{1,1}')^2}{R^2}$ when $\nu\notin\mathbb Z$. Here $z_{\mu,1}'$ denotes the first positive zero of the derivative of the Bessel function $J_{\mu}$ (see Appendix \ref{disk}). We recall that $\frac{(z_{1,1}')^2}{R^2}$ is exactly the second Neumann eigenvalue of the Laplacian on a ball of radius $R$. Analogously, we have $\sigma_2(B(x_0,R),A_{x_0,\nu})=\dfrac{1-\inf_{k\in\mathbb Z}\abs{\nu-k}}{R}<\frac{1}{R}$ (see Appendix \ref{disk_S}), and $\frac{1}{R}$ is the second Steklov eigenvalue of the Laplacian on $B(x_0,R)$. However, for  problem \eqref{AB_N} (\eqref{AB}), it can be shown that the disjoint union of two balls with suitable radii, and one of them centered at the pole, and total area $\pi$, has second eigenvalue strictly greater than that of the standard Neumann (Steklov) eigenvalue on $B(0,1)$. Therefore we are left with the following

\medskip

{\bf Open problem 1.} Find (if exists) a maximiser for the second Neumann (Steklov) Aharonov-Bohm eigenvalue among all smooth bounded domains in $\mathbb R^2$. 

\medskip

As for inequality \eqref{Weinstock}, preliminary calculations show that it holds for all circular annuli in $\mathbb R^2$, but it fails in the case of long  cylinders. In fact, when $\Omega=\mathbb S^1\times (-L,L)$, the first Steklov eigenvalue is given by $\inf_{k\in\mathbb Z}\abs{\nu-k}\tanh\left(\inf_{k\in\mathbb Z}\abs{\nu-k}L\right)$, hence \eqref{Weinstock} does not hold for $L>L_0$, with $L_0$ sufficiently large. We are left with the following

\medskip

{\bf Open problem 2.} Does inequality \eqref{Weinstock} hold for all doubly connected domains of the plane?

\medskip

The present paper is organized as follows. In Section \ref{Schroedinger} we prove an isoperimetric inequality for the first (positive) Neumann eigenvalue of the Schr\"odinger operator $\Delta+V$ on domains in manifolds of revolution, under suitable hypothesis on the potential $V$ and on the density of the Riemannian metric  (Theorem \ref{sch}). In Section \ref{app_AB}, Theorem \ref{sch} is applied to the magnetic Neumann spectrum. In particular, in Subsection \ref{app_AB_Rev} the reverse Faber-Krahn inequality is proved for manifolds of revolution (Theorem \ref{ab}). As a consequence, we prove that it holds  for domains in $\mathbb R^2$ and $\mathbb H^2$, (Corollary \ref{plane_AB_N_1}). In Subsection \ref{app_AB_S} it is proved for spherical domains contained in a hemisphere centered at the pole (Theorem \ref{ab_S}). In Subsection \ref{sec:szego} we prove the isoperimetric inequality for spherical simply connected domains with area less than $2\pi$ (Theorem \ref{ab_S_szego}). In Section \ref{sec:brockweinstock} we prove Brock's inequality for planar domains (Theorem \ref{thm:brock}), and Weinstock's inequality for planar domains (Theorem \ref{thm:weinstock}).

\medskip
We have included in this article a quite complete set of appendices, where we discuss  the functional and geometrical setting for the magnetic problems that we consider. In particular, we will compute explicitly the Neumann and Steklov spectrum for the unit disk.

\medskip

 In Appendix \ref{functional} we provide the basic spectral theory for problems \eqref{AB_N} and \eqref{AB}. Appendix \ref{sec:AB_revolution} contains a more explicit description of the eigenvalues and the eigenfunctions of the magnetic Neumann and Steklov problems on disks in manifolds of revolution (see Appendices \ref{sub:rev_N} and \ref{rev_S}). These facts, which have an interest on their own, are crucial for the proofs of the main Theorems. In Appendices \ref{disk} and \ref{disk_S} we describe the eigenfunctions and eigenvalues on disks in $\mathbb R^2$. Finally, in Appendix \ref{sec:conformal} we prove the conformal invariance of the Aharonov-Bohm energy which is crucial in the proof of Weinstock's inequality.


\section{Isoperimetric inequality for Schr\"odinger operators}\label{Schroedinger} 

In this section,  $\Omega$ will be a bounded smooth domain in a $n$-dimensional manifold of revolution $(M,g)$ with pole $x_0$. With $D$ we denote the diameter of $M$ (which can be infinite) and with $D_{\Omega}$ we denote the diameter of $\Omega$. 

\medskip

We recall that a smooth $n$-dimensional Riemannian manifold $(M,g)$ with a distinguished point $x_0$ is called a {\it revolution manifold with pole $x_0$} if $M\setminus\{x_0\}$ is isometric to $(0,D]\times\mathbb S^{n-1}$ whose metric is, in normal coordinates based at the pole, $g=dr^2+\Theta(r)^2 g_{\mathbb S^{n-1}}$, for $r\in (0,D)$. Here $\Theta(0)=\Theta''(0)=0$, $\Theta'(0)=1$, and $g_{\mathbb S^{n-1}}$ is the standard metric on the $n-1$-dimensional sphere. The density of the Riemannian metric on $M$ in normal coordinates is given by $\sqrt{{\rm det}\,g}=\Theta^{n-1}(r)=\theta(r)$.
 
 It is known that, for space forms of constant curvature $K=0,-1,1$ we have:
$$
\theta(r)=
\begin{cases}r^{n-1} & \text{if\  }K=0\\
\sinh^{n-1}(r) & \text{if\  }K=-1\\
\sin^{n-1}(r)& \text{if \ }K=1
\end{cases}
$$
In general, we have $\theta>0$ on $(0,D)$. We refer to Appendix \ref{sec:AB_revolution} for more information on manifolds of revolution.

\medskip

We discuss here an isoperimetric inequality for the first eigenvalue of the Schr\"odinger operator:
\begin{equation}\label{schr}
\begin{cases}
\Delta u +Vu=\lambda u\,, & {\rm in\ }\Omega\\
\langle\nabla u,N\rangle=0\,, & {\rm on\ }\partial\Omega.
\end{cases}
\end{equation}

Note that the results of this section can be applied to manifolds of revolution of any dimension $n\geq 2$.

\medskip

{\bf Assumptions on the potential $V$.}
\begin{enumerate}
\item The potential $V$ is smooth on $M\setminus\{x_0\}$, non-negative and radial with respect to $x_0$, that is, $V=V(r)$;
\item $V$ is non-increasing on $(0,D_{\Omega})$ : $V'(r)\leq 0$ on $(0,D_{\Omega})$;
\item $\theta' V'+2V^2\theta\leq 0$ on $(0,R)$, where  $R>0$ is such that $|B(x_0,R)|=|\Omega|$;
\item there exists a first eigenfunction $u$ of \eqref{schr} on $B(x_0,R)$ which is non-negative, radial and non-decreasing in the radial direction: $u'\geq 0$.

\end{enumerate}

We consider the following number: 
\begin{equation}\label{minmax_V}
\lambda_1(\Omega,\Delta+V)=\inf_{0\ne u\in H^1_V(\Omega)}\frac{\int_{\Omega}|\nabla u|^2+Vu^2}{\int_{\Omega}u^2},
\end{equation}
where $H^1_V(\Omega)=\{u\in H^1(\Omega):V^{1/2}u\in L^2(\Omega)\}$, and $H^1(\Omega)$ is the standard Sobolev space of square integrable functions with square integrable weak first derivatives. Since $V$ is non-negative, the infimum in \eqref{minmax_V}  exists and is non-negative.  We are ready to state the main result of this section.

\begin{theorem}\label{sch}Let $\Omega$ be a smooth bounded domain in a manifold of revolution $M$ with pole at $x_0$. Let $B=B(x_0,R)$ be the ball centered at $x_0$ with the same volume of $\Omega$. Let Assumptions 1-4 hold. Then
$$
\lambda_1(\Omega,\Delta+V)\leq \lambda_1(B(x_0,R),\Delta+V).
$$
Equality holds if and only if $\Omega=B(x_0,R)$.
\end{theorem}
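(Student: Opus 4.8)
The plan is to adapt the Szeg\"o--Weinberger strategy, using as trial function on $\Omega$ the radial profile of the ground state of the ball of equal volume. Let $u=u(r)$ be the non-negative, radial, non-decreasing first eigenfunction on $B=B(x_0,R)$ provided by Assumption 4, with eigenvalue $\Lambda:=\lambda_1(B,\Delta+V)$; it satisfies the Neumann condition $u'(R)=0$ and the radial equation $(\theta u')'=\theta(V-\Lambda)u$ on $(0,R)$. I extend $u$ to a Lipschitz function $g$ on $[0,D)$ by setting $g=u$ on $[0,R]$ and $g\equiv u(R)$ on $[R,D)$, and I use $\phi(x):=g(r(x))\in H^1_V(\Omega)$ as a test function in the unconstrained variational characterization \eqref{minmax_V}. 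Since $|\nabla\phi|=|g'|$, this gives $\lambda_1(\Omega,\Delta+V)\le \int_\Omega P\big/\int_\Omega Q$, where $P(r):=g'(r)^2+V(r)g(r)^2$ and $Q(r):=g(r)^2$; the same computation on $B$, where $g=u$ is a genuine eigenfunction, yields $\int_B P\big/\int_B Q=\Lambda$.

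The core of the argument consists of two monotone-rearrangement comparisons, both exploiting $|B|=|\Omega|$, hence $|\Omega\setminus B|=|B\setminus\Omega|$. First, $Q=g^2$ is non-decreasing in $r$ (because $u\ge 0$ is non-decreasing and $g$ is eventually constant); comparing the contributions of $\Omega\setminus B$ (where $r>R$) and of $B\setminus\Omega$ (where $r<R$) against the threshold value $Q(R)$ yields $\int_\Omega Q\ge \int_B Q$. Second, and symmetrically, I will show that $P$ is non-increasing in $r$, which by the same threshold comparison gives $\int_\Omega P\le \int_B P$. Combining the two and using positivity of all the integrals, $\lambda_1(\Omega,\Delta+V)\le \int_\Omega P\big/\int_\Omega Q\le \int_B P\big/\int_B Q=\Lambda=\lambda_1(B,\Delta+V)$, which is the claim.

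The main obstacle is the monotonicity of the numerator density $P$, and this is exactly where Assumptions 2--3 enter. On $(R,D)$ one has $P=V\,u(R)^2$, non-increasing by Assumption 2, and $P$ is continuous at $R$ thanks to $u'(R)=0$. On $(0,R)$ I differentiate and substitute the radial equation to eliminate $u''$, obtaining
\[
P'=-2\Lambda\,uu'+\Big(V'u^2+4V\,uu'-2\tfrac{\theta'}{\theta}\,u'^2\Big).
\]
The first summand is $\le 0$ since $\Lambda\ge 0$ and $u,u'\ge 0$. The bracketed term is a quadratic form in $(u,u')$ with matrix $\left(\begin{smallmatrix}V' & 2V\\ 2V & -2\theta'/\theta\end{smallmatrix}\right)$: its diagonal entries are $\le 0$ by Assumption 2 and by $\theta'\ge 0$ (valid on $\mathbb R^2$, on $\mathbb H^2$, and on a hemisphere of $\mathbb S^2$, which is precisely why the spherical case must be restricted), while its determinant equals $-\tfrac{2}{\theta}\big(\theta'V'+2V^2\theta\big)\ge 0$ exactly by Assumption 3. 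Hence the form is negative semidefinite, so $P'\le 0$ on $(0,R)$ and $P$ is non-increasing on all of $(0,D)$.

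Finally, for the rigidity statement I trace the equalities backwards: if $\lambda_1(\Omega,\Delta+V)=\Lambda$, then both rearrangement inequalities must be equalities, and the strict monotonicity of $P$ (which holds when $V'<0$, as in the Aharonov--Bohm application with $\nu\notin\mathbb Z$) forces $|\Omega\setminus B|=|B\setminus\Omega|=0$; the smoothness of $\Omega$ then gives $\Omega=B(x_0,R)$.
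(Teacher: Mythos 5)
Your proposal is correct and follows essentially the same route as the paper's proof: the same capped radial test function $f$, the same two threshold comparisons exploiting $\abs{\Omega\setminus B}=\abs{B\setminus\Omega}$ (the paper's \eqref{chain1} and \eqref{chain2}), and the same monotonicity of $F=u'^2+Vu^2$ from Lemma \ref{lem_rev}, which you establish via negative semidefiniteness of the $2\times 2$ form instead of the paper's completed square --- an equivalent computation, and note that your determinant condition together with $V'<0$ already forces $\theta'\geq 0$, so the separate appeal to the geometry of $\mathbb R^2$, $\mathbb H^2$ or the hemisphere is unnecessary for the abstract statement. Your explicit rigidity step, with the caveat that strict monotonicity of $P$ requires $V'<0$ (as in the Aharonov--Bohm case with $\nu\notin\mathbb Z$), is if anything slightly more careful than the paper, whose proof of Theorem \ref{sch} leaves the equality case implicit.
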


If the spectrum of \eqref{schr} is discrete in its lower portion, the number $\lambda_1(\Omega,\Delta+V)$ is  the first eigenvalue.  This is the case of regular potentials (e.g., $V\in L^{n/2}$ for $n\geq 3$ or $V\in L^{1+\delta}$, $\delta>0$ for $n=2$), but also of singular potentials of the form $\frac{\nu^2}{r^2}$ (inverse-square potentials). In both these cases, the whole spectrum is purely discrete and made of non-negative eigenvalues of finite multiplicity diverging to $+\infty$.

\medskip

Let now $B(x_0,R)$ be the ball of radius $R$ centered at  the pole $x_0$ and assume that there exists a first eigenfunction of \eqref{schr} on $B(x_0,R)$ which is non-negative (and therefore radial, as $V$ is radial) and non-decreasing with respect to $r$. Let us denote this function by $u=u(r)$. It satisfies
\begin{equation}\label{radialtheta}
\begin{cases}
{u''+\dfrac{\theta'}{\theta}u'+(\lambda-V)u=0}\,, & {\rm in\ }(0,R)\\
{u'(R)=0},
\end{cases}
\end{equation}
where $\lambda=\lambda_1(B(x_0,R),\Delta+V)$ is the first eigenvalue.

\medskip

In order to prove Theorem \ref{sch} we need the following lemma:

\begin{lemme}\label{lem_rev}
Let $u=u(r)$ be a solution of \eqref{radialtheta} such that $u\geq 0$ and $u'\geq 0$ on $(0,R)$. 
Let
$$
F(r)=u'(r)^2+Vu(r)^2.
$$
If $V'\leq 0$ on $(0,R)$ and  
$
\theta' V'+ 2V^2\theta\leq 0
$ on $(0,R)$,
then one has:
$$
F'(r)\leq 0
$$
on $(0,R)$.
\end{lemme}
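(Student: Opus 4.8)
The plan is to prove the lemma by a direct computation: differentiate $F$, substitute the radial equation \eqref{radialtheta}, discard a manifestly nonpositive term coming from $\lambda$, and then absorb the remaining indefinite cross term by completing a square whose precise shape is dictated by the hypothesis $\theta'V'+2V^2\theta\le 0$.

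First I would differentiate $F(r)=u'(r)^2+Vu(r)^2$ to get $F'=2u'u''+V'u^2+2Vuu'$, and then insert $u''=-\frac{\theta'}{\theta}u'-(\lambda-V)u$ from \eqref{radialtheta}. Collecting terms, this yields
\[
F'=-\frac{2\theta'}{\theta}(u')^2+(4V-2\lambda)uu'+V'u^2.
\]
Since $\lambda=\lambda_1(B(x_0,R),\Delta+V)\ge 0$ and $u,u'\ge 0$ on $(0,R)$, the contribution $-2\lambda uu'$ is nonpositive and may simply be dropped, leaving the bound $F'\le -\frac{2\theta'}{\theta}(u')^2+4Vuu'+V'u^2$.

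The key step is then to control the indefinite cross term $4Vuu'$. Using $\theta'>0$ on $(0,R)$ (which holds for the densities to which the lemma is applied), I would complete the square
\[
-\frac{2\theta'}{\theta}(u')^2+4Vuu'=-\frac{2\theta'}{\theta}\Bigl(u'-\frac{V\theta}{\theta'}u\Bigr)^2+\frac{2V^2\theta}{\theta'}u^2,
\]
so that
\[
F'\le -\frac{2\theta'}{\theta}\Bigl(u'-\frac{V\theta}{\theta'}u\Bigr)^2+\frac{\theta'V'+2V^2\theta}{\theta'}u^2.
\]
The first term is $\le 0$ because $\theta'>0$, and the second is $\le 0$ precisely by hypothesis, since $\theta'V'+2V^2\theta\le 0$ while $\theta'>0$. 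Hence $F'\le 0$ on $(0,R)$, as claimed.

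I expect the only genuine subtlety to be the treatment of the cross term together with the sign of $\theta'$. The weight $\frac{V\theta}{\theta'}$ chosen in the square is exactly the one producing the combination $\theta'V'+2V^2\theta$ that appears in the assumption, so the heart of the matter is recognizing that hypothesis~3 is tailored to this completion of square; the nonnegativity assumption $V\ge 0$ and the monotonicity $V'\le 0$ enter through $u,u'\ge 0$ and the second remainder term respectively. The positivity $\theta'>0$ on $(0,R)$ is needed both to keep the squared term nonpositive and to divide by $\theta'$, and it is automatic for the Euclidean, hyperbolic, and sub-hemispheric spherical densities in which the lemma is used.
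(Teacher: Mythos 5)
Your proof is correct, and it is structurally parallel to the paper's: both differentiate $F$, substitute \eqref{radialtheta}, and discard the term $-2\lambda uu'$ using $\lambda\ge 0$ and $u,u'\ge0$. The difference lies in which variable the square is completed in, and it is worth noting. The paper completes the square in $u$ with $V'$ as the quadratic coefficient, writing $V'u^2+4Vuu'=V'\bigl(u+2\tfrac{V}{V'}u'\bigr)^2-4\tfrac{V^2}{V'}(u')^2$, so the leftover merges with $-2\tfrac{\theta'}{\theta}(u')^2$ into $-2\bigl(\tfrac{\theta'}{\theta}+2\tfrac{V^2}{V'}\bigr)(u')^2$, whose sign follows by dividing the hypothesis $\theta'V'+2V^2\theta\le0$ by $\theta V'$; you instead complete the square in $u'$ with $-2\theta'/\theta$ as the coefficient, so the leftover merges with $V'u^2$ into $\tfrac{\theta'V'+2V^2\theta}{\theta'}\,u^2$. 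The trade-off is the nondegeneracy each route silently requires: the paper's manipulation needs $V'<0$ (one divides by $V'$), while yours needs $\theta'>0$, which, as you rightly flag, is not among the lemma's hypotheses. Your assumption is, however, self-repairing: wherever $V>0$ and $V'<0$, the hypothesis $\theta'V'\le-2V^2\theta<0$ itself forces $\theta'\ge 2V^2\theta/(-V')>0$; and at points where $V'=0$, hypothesis 3 forces $V=0$ there (since $\theta>0$), the cross terms vanish, and both proofs then reduce to needing $\theta'\ge0$ --- exactly the implicit nondegeneracy the paper's division by $\theta V'$ also glosses over, and which is guaranteed in the applications (where $V=\nu^2/\theta^2$ and $\theta'\ge0$ is assumption i) of Theorem \ref{ab}). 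A small advantage of your version is that it exhibits hypothesis 3 directly as the nonpositivity of the $u^2$-coefficient after completing the square, without the paper's intermediate division step.
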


\begin{proof} One has:
\begin{multline*}
F'=2u'u''+V'u^2+2Vuu'
=2u'\Big(-\frac{\theta'}{\theta}u'-(\lambda -V)u\Big)+V'u^2+2Vuu'\\
=-2\frac{\theta'}{\theta}u'^2-2\lambda uu'+V'u^2+4Vuu'
\leq -2\frac{\theta'}{\theta}u'^2+V'u^2+4Vuu'
\end{multline*}
because $u\geq 0$ and $u'\geq 0$. Now:
$$
V'u^2+4Vuu'=V'\Big(u+2\frac{V}{V'}u'\Big)^2-4\frac{V^2}{V'}u'^2,
$$
and we have:
$$
F'\leq -2\Big(\dfrac{\theta'}{\theta}+2\dfrac{V^2}{V'}\Big)u'^2+V'\Big(u+2\frac{V}{V'}u'\Big)^2.
$$
As $V'\leq 0$ we conclude:
$$
F'\leq -2\Big(\dfrac{\theta'}{\theta}+2\dfrac{V^2}{V'}\Big)u'^2.
$$
If $\theta' V'+2V^2\theta\leq 0$ then, dividing by $\theta V'$ (which is non-positive) we indeed have
$$
\dfrac{\theta'}{\theta}+2\dfrac{V^2}{V'}\geq 0
$$
which guarantees that $F'\leq 0$.

\end{proof}

\begin{proof}[Proof of Theorem \ref{sch}]
Define the radial function $f:M\to\reals$ as follows:
$$
f(r)=\twosystem{u(r)\quad\text{for $r\leq R$}}
{u(R)\quad\text{for $r\geq R$}}
$$
We note that, by construction, $f_{|_{\Omega}}\in H^1_V(\Omega)$, therefore it is possible to use as test function in \eqref{minmax_V}.
\medskip

We start by observing that, by assumption $\abs{\Omega\cap B^c}=\abs{\Omega^c\cap B}$, so that, since $u$ is increasing, we have $u(r)\leq u(R)$ and
$$
\int_{\Omega}f^2\geq\int_{B}u^2
$$
In  fact:
\begin{multline}\label{chain1}
\int_{\Omega}f^2=\int_{\Omega\cap B}f^2+\int_{\Omega\cap B^c}f^2
=\int_{\Omega\cap B}u^2+u(R)^2\abs{\Omega\cap B^c}\\
=\int_{\Omega\cap B}u^2+u(R)^2\abs{\Omega^c\cap B}
\geq\int_{\Omega\cap B}u^2+\int_{\Omega^c\cap B}u^2
=\int_{\Omega}u^2
\end{multline}

We have to control the energy. Since $F(r)=u'^2(r)+V(r)u^2(r)$ is decreasing, $f$ is constant, equal to $u(R)$ on $\Omega\cap B^c$, $V'\leq 0$ on $(0,D_{\Omega})$, and $u(R)\leq u(r)$ on $B^c$:
\begin{multline}\label{chain2}
\int_{\Omega\cap B^c}\abs{\nabla f}^2+Vf^2=u(R)^2\int_{\Omega\cap B^c}V\leq u(R)^2V(R)|\Omega\cap B^c|\\
=u(R)^2V(R)|\Omega^c\cap B|\leq F(R)|\Omega^c\cap B|\leq \int_{\Omega^c\cap B}F,
\end{multline}
where we have used the monotonicity of $V$ in the first inequality and the monotonicity of $F$ in the last inequality. Therefore, from \eqref{chain1}, \eqref{chain2} and from the fact that $F=\abs{\nabla f}^2+Vf^2=\abs{\nabla u}^2+Vu^2$ on $B$, we deduce:
\begin{multline*}
\lambda_1(\Omega,\Delta+V)\int_{B}u^2\leq \lambda_1(\Omega,\Delta+V)\int_{\Omega}f^2
\leq\int_{\Omega}\abs{\nabla f}^2+Vf^2\\
=\int_{\Omega\cap B}\abs{\nabla f}^2+Vf^2+\int_{\Omega\cap B^c}\abs{\nabla f}^2+Vf^2
\leq \int_{\Omega\cap B}F+\int_{\Omega^c\cap B}F
=\int_BF\\
=\int_B \abs{\nabla u}^2+Vu^2
=\lambda_1(B,\Delta+V)\int_Bu^2
\end{multline*}
and the assertion follows.
\end{proof}

\begin{rem}
Note that Assumption $3$ may look quite involved. However, when we will apply Theorem \ref{sch} to the particular case of the Aharonov-Bohm operator we will choose $V=\frac{\nu^2}{\theta^2}$ and condition $3$ will take a simpler and more natural form (see Theorem \ref{ab}).
\end{rem}

\section{Application to the Aharonov-Bohm spectrum of the Neumann problem}\label{app_AB}

We apply now the results of Section \ref{Schroedinger} to the lowest eigenvalue of problem \eqref{AB_N}. We consider first the general case of manifolds of revolution, then we concentrate on $\mathbb R^2$, $\mathbb H^2$ and $\mathbb S^2$.

\subsection{Aharonov-Bohm spectrum on domains of manifolds of revolution}\label{app_AB_Rev}

We take a $2$-dimensional manifold of revolution $M^2$ with pole $x_0$, and  $\theta(r)$ the density of the Riemannian metric in polar coordinates $(r,t)$ around the pole.  The $1$-form  $A_{x_0,\nu}=\nu\,dt$ is closed, harmonic, and has flux $\nu$ around $x_0$. It will be called Aharonov-Bohm potential with flux $\nu$. 

\smallskip

On a smooth bounded domain $\Omega$ of $M^2$ we have that the spectrum of $\Delta_{A_{x_0,\nu}}$ with Neumann condition, namely problem \eqref{AB_N}, is made of an increasing sequence of non-negative eigenvalues of finite multiplicity diverging to $+\infty$ (see Appendix \ref{functional}).

\smallskip
For all radial functions $u=u(r)$ we have
$$
\abs{A_{x_0,\nu}}^2=\dfrac{\nu^2}{\theta^2}, \quad \scal{\nabla u}{A_{x_0,\nu}}=0,
$$
therefore $\Delta_{A_{x_0,\nu}}$ applied to a real, radial function $u=u(r)$, writes:
$$
\Delta_{A_{x_0,\nu}}u=\Delta u+Vu
$$
where $V=\dfrac{\nu^2}{\theta^2}$.

In Appendix \ref{sub:rev_N} we will prove that for a disk centered at the pole $B(x_0,R)$ the spectrum can be written in terms of that of the union a countable family of Sturm-Liouville problems indexed by an integer $k$ (see Lemma \ref{lemma_N}). In particular, the first eigenvalue, denoted by $\lambda_1(\Omega,A_{x_0,\nu})$, is non-negative, and is positive if and only if $\nu\notin\mathbb Z$.

\smallskip

As explained in Appendix \ref{sub:rev_N}, thanks to gauge invariance we can take $\nu\in (0,\frac 12]$ and in that case the first eigenfunction is real and radial.

\smallskip

We denote it by $u=u(r)$. Moreover, we prove in Appendix \ref{sub:rev_N} that $u>0$ and $u'>0$ for all $R\in(0,\bar R)$, where $\bar R$ is the first zero of $\theta'$ (see Theorem \ref{technical_sphere_0}).

\medskip

We can apply Theorem \ref{sch}, taking $V=\dfrac{\nu^2}{\theta^2}$. The conditions
$$
V'\leq 0, \quad \theta'V'+2V^2\theta\leq 0
$$ 
reduce to the conditions
$$
\theta'\geq 0, \quad \theta'^2\geq \nu^2.
$$

\medskip

{\bf Assumption.} Through all this subsection, we shall always assume $\nu\in(0,\frac{1}{2}]$.

\medskip 
We therefore have the following:

\begin{theorem}\label{ab} Let $\Omega$ be a smooth bounded domain with diameter $D_{\Omega}$ in a revolution manifold $M^2$ with pole $x_0$, and  with density $\theta$, and let $B(x_0,R)$ be the disk centered in $x_0$ with the same volume as $\Omega$.  Assume $\nu\in(0,\frac{1}{2}]$. If
\begin{enumerate}[i)]
\item $\theta'\geq 0$ on $(0,D_{\Omega})$,
\item $\theta'^2\geq \nu^2$ on $(0,R)$,
\end{enumerate}
then
\begin{equation}\label{ineqV}
\lambda_1(\Omega,A_{x_0,\nu})\leq \lambda_1(B(x_0,R),A_{x_0,\nu}).
\end{equation}
Equality holds if and only if $\Omega=B(x_0,R)$. 
\end{theorem}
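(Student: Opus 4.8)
The plan is to deduce Theorem \ref{ab} from Theorem \ref{sch} applied to the singular potential $V=\nu^2/\theta^2$, which is precisely the potential through which $\Delta_{A_{x_0,\nu}}$ acts on radial functions, as recorded just above the statement. First I would check that the hypotheses (i)--(ii) translate exactly into Assumptions 1--4 of Theorem \ref{sch}. Assumption 1 is immediate, since $\theta>0$ on $(0,D)$ makes $V=\nu^2\theta^{-2}$ smooth, positive and radial on $M\setminus\{x_0\}$. For Assumption 2 one computes $V'=-2\nu^2\theta'\theta^{-3}$, so $V'\leq 0$ on $(0,D_\Omega)$ is equivalent to (i). For Assumption 3 a direct computation gives
$$
\theta'V'+2V^2\theta=2\nu^2\theta^{-3}(\nu^2-\theta'^2),
$$
which, as $\theta>0$, is $\leq 0$ on $(0,R)$ exactly when $\theta'^2\geq\nu^2$ there, i.e. (ii). Finally, Assumption 4 is furnished by the radial analysis on the disk: combining (i) and (ii) yields $\theta'\geq\nu>0$ on $(0,R]$, hence $R<\bar R$ (the first zero of $\theta'$), and by Theorem \ref{technical_sphere_0} the first eigenfunction $u=u(r)$ on $B(x_0,R)$ is real, radial, positive and strictly increasing.

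With the assumptions verified, Theorem \ref{sch} gives $\lambda_1(\Omega,\Delta+V)\leq\lambda_1(B(x_0,R),\Delta+V)$, with equality iff $\Omega=B(x_0,R)$. It then remains to pass from the Schr\"odinger eigenvalue to the magnetic one, and the key is an energy identity: for real-valued $f$ the magnetic gradient $\nabla^{A_{x_0,\nu}}f=\nabla f-ifA_{x_0,\nu}$ has real part $\nabla f$ and purely imaginary part $-fA_{x_0,\nu}$, so that
$$
\abs{\nabla^{A_{x_0,\nu}}f}^2=\abs{\nabla f}^2+f^2\abs{A_{x_0,\nu}}^2=\abs{\nabla f}^2+Vf^2.
$$
Thus the magnetic Rayleigh quotient of a real function coincides with its Schr\"odinger quotient, and taking the infimum over real test functions (a subclass of the complex admissible ones) yields $\lambda_1(\Omega,A_{x_0,\nu})\leq\lambda_1(\Omega,\Delta+V)$. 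On the disk $B(x_0,R)$, the standing assumption $\nu\in(0,\frac12]$ and gauge invariance guarantee that the first magnetic eigenfunction is itself real and radial; using it in both quotients gives the reverse inequality there, whence $\lambda_1(B(x_0,R),A_{x_0,\nu})=\lambda_1(B(x_0,R),\Delta+V)$.

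Chaining these three relations produces
$$
\lambda_1(\Omega,A_{x_0,\nu})\leq\lambda_1(\Omega,\Delta+V)\leq\lambda_1(B(x_0,R),\Delta+V)=\lambda_1(B(x_0,R),A_{x_0,\nu}),
$$
which is \eqref{ineqV}. For the rigidity statement, if the two extremes agree then every inequality in the chain is an equality; in particular $\lambda_1(\Omega,\Delta+V)=\lambda_1(B(x_0,R),\Delta+V)$, and the equality case of Theorem \ref{sch} forces $\Omega=B(x_0,R)$ (note that $\nu\in(0,\frac12]$ automatically excludes integer flux). The step I expect to demand the most care is Assumption 4: one must ensure the hypotheses genuinely confine $R$ below the first critical radius $\bar R$, since only there is the disk ground state monotone—this is exactly what (ii) secures, and it is the point where the geometry of the revolution manifold, through the appendix analysis, really enters.
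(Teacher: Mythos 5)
Your proposal is correct and follows essentially the same route as the paper: both deduce Theorem \ref{ab} from Theorem \ref{sch} with $V=\nu^2/\theta^2$, reducing Assumptions 2--3 to conditions (i)--(ii) by the same computation, and securing Assumption 4 and the identification $\lambda_1(B(x_0,R),A_{x_0,\nu})=\lambda_1(B(x_0,R),\Delta+V)$ through the real, radial, increasing disk eigenfunction of Theorem \ref{technical_sphere_0} and the energy identity $\abs{\nabla^{A_{x_0,\nu}}f}^2=\abs{\nabla f}^2+Vf^2$ for real $f$. If anything, you spell out the magnetic--Schr\"odinger bridge and the equality case more explicitly than the paper, which states these steps only implicitly in Section \ref{app_AB_Rev}.
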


When the flux is $0$, inequality \eqref{ineqV} reduces to the identity $0=0$ for all domains $\Omega$.

Note that Theorem \ref{ab} provides sufficient conditions to have a reverse Faber-Krahn inequality on a manifold of revolution, which are quite simple to understand. Theorem \ref{ab} works well, as we shall see, in the case of $\mathbb R^2$ and $\mathbb H^2$. However, in some cases (e.g., spherical domains), condition $ii)$ is somehow restrictive. In Appendix \ref{sub:rev_N} we prove Theorem \ref{technical_sphere}, where we show that $ii)$ can be replaced by some other (more involved) condition, which, in the case of the sphere, turns out to be less restrictive than $ii)$. Nevertheless, we decide to keep Theorem \ref{ab} here for two reasons: it is simpler and has immediate application in many contexts; it is a consequence of a more general result valid for Schr\"odinger operators, namely Theorem \ref{sch}, which we believe has an interest per se. 



\medskip

We now assume that $M^2$ has infinite diameter, so that we can identify it with the manifold $(\mathbb R^2,g)$, where $g$ is a complete, rotationally invariant metric around $x_0$, and consider the $1$-form $A_{x_0,\nu}$. If $\theta(r)$ is the density of the Riemannian measure, then it is well-known that the Gaussian curvature of $(\mathbb R^2,g)$ is given by $K=-\frac{\theta''}{\theta}$. Assuming $K\leq 0$, we will get $\theta''\geq 0$, and since $\theta'(0)=1$, we immediately obtain $\theta'(r)\geq 1$ for all $r>0$. The assumptions $i)$ and $ii)$ of Theorem \ref{ab} are met, thus we have the following:





\begin{cor}\label{plane_AB_N_1} Let $\Omega$ be a smooth bounded domain in the manifold of revolution $(\mathbb R^2,g)$ with pole at $x_0$ and non-positive Gaussian curvature. Let $B(x_0,R)$ be the disk of radius $R$ centered at $x_0$ such that $|B(x_0,R)|=|\Omega|$. Then
$$
\lambda_1(\Omega,A_{x_0,\nu})\leq \lambda_1(B(x_0,R),A_{x_0,\nu}),
$$
with equality if and only if $\Omega=B(x_0,R)$.
\end{cor}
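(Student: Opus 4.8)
The plan is simply to verify that the curvature hypothesis forces the two geometric conditions i) and ii) of Theorem \ref{ab}, and then to invoke that theorem directly. Since $(\mathbb R^2,g)$ is complete and noncompact, its diameter is infinite ($D=\infty$), so every bounded domain $\Omega$ has finite diameter $D_\Omega$ and the equal-volume ball $B(x_0,R)$ exists with $R<\infty$.

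First I would translate the curvature assumption into a statement about the density $\theta$. Using the identity $K=-\theta''/\theta$ recalled above, together with $\theta>0$ on $(0,\infty)$, the hypothesis $K\le 0$ is equivalent to $\theta''\ge 0$, i.e.\ $\theta$ is convex. Hence $\theta'$ is non-decreasing on $(0,\infty)$, and combined with the normalization $\theta'(0)=1$ valid in normal coordinates on a revolution manifold this yields
$$
\theta'(r)\ge 1 \qquad \text{for all } r>0.
$$

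Next I would check the two assumptions of Theorem \ref{ab} under the standing hypothesis $\nu\in(0,\tfrac12]$ of this subsection. Condition i), that $\theta'\ge 0$ on $(0,D_\Omega)$, is immediate from $\theta'\ge 1>0$. For condition ii), $\theta'(r)^2\ge\nu^2$ on $(0,R)$, it suffices to observe that $\nu\in(0,\tfrac12]$ gives $\nu^2\le\tfrac14\le 1\le\theta'(r)^2$. Both hypotheses of Theorem \ref{ab} therefore hold, and applying it yields $\lambda_1(\Omega,A_{x_0,\nu})\le\lambda_1(B(x_0,R),A_{x_0,\nu})$ together with the equality case $\Omega=B(x_0,R)$. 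For an arbitrary non-integer flux, gauge invariance, which replaces $\nu$ by $\nu-k$ with $k\in\mathbb Z$ and leaves $\lambda_1$ unchanged, reduces matters to the range $(0,\tfrac12]$; for $\nu\in\mathbb Z$ the inequality is the trivial identity $0=0$.

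There is no substantial obstacle here: all the analytic content sits in Theorem \ref{ab}, and the corollary amounts to recognizing that non-positive curvature is precisely the convexity of $\theta$ that makes $\theta'\ge 1$, which comfortably dominates the range $\nu\le\tfrac12$. The only point needing a little care is the legitimacy of the gauge reduction to $\nu\in(0,\tfrac12]$ and the transfer of the equality statement through it, but this is already provided by the discussion of gauge invariance in Appendix \ref{sub:rev_N}.
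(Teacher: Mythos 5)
Your proof is correct and follows essentially the same route as the paper: the identity $K=-\theta''/\theta$ gives $\theta''\geq 0$, hence $\theta'\geq\theta'(0)=1$ by monotonicity, so conditions $i)$ and $ii)$ of Theorem \ref{ab} hold (with $\nu^2\leq\tfrac14\leq\theta'^2$) and the result follows by direct application of that theorem. Your added remarks on the gauge reduction to $\nu\in(0,\tfrac12]$ and the trivial integer-flux case match the paper's standing assumption in this subsection and its Appendix \ref{gauge_inv}.
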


In particular, Corollary \ref{plane_AB_N_1} applies to $\mathbb R^2$ with its Euclidean Riemannian metric, and to $\mathbb H^2$. Since all points in $\mathbb R^2$ and $\mathbb H^2$ can be chosen as poles of the manifold, this gives a proof of Theorem \ref{ThmNeumann}.

\smallskip

Another consequence of Theorem \ref{ab} is the following

\begin{cor}
Let $B(p,R)$ be a disk in $\mathbb R^2$ or $\mathbb H^2$, punctured at $x_0\in B(p,R)$. Then
$$
\lambda_1(B(p,R),A_{x_0,\nu})\leq\lambda_1(B(x_0,R),A_{x_0,\nu}),
$$
that is, among all disks of the same measure, the first eigenvalue is maximized by the disk punctured at its center.
\end{cor}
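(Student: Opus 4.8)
The plan is to deduce this statement directly from Corollary \ref{plane_AB_N_1}, with the disk $B(p,R)$ playing the role of the competitor domain $\Omega$. First I would set $\Omega:=B(p,R)$, which is a smooth bounded domain of $\mathbb R^2$ or $\mathbb H^2$ containing the pole $x_0$ in its interior. Corollary \ref{plane_AB_N_1} applies because both the Euclidean plane ($K=0$) and the hyperbolic plane ($K=-1$) are complete, rotationally invariant manifolds of non-positive Gaussian curvature about any chosen point, so $x_0$ is admissible as a pole of the ambient manifold $(\mathbb R^2,g)$.

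The one point that genuinely needs to be checked is that the equal-volume disk centered at the pole furnished by Corollary \ref{plane_AB_N_1} is precisely $B(x_0,R)$, i.e. that it has the same radius $R$ as $B(p,R)$. This follows from homogeneity: in a space of constant curvature the area of a geodesic disk depends only on its radius and not on the position of its center (explicitly, $|B(\cdot,R)|=\pi R^2$ in $\mathbb R^2$ and $|B(\cdot,R)|=2\pi(\cosh R-1)$ in $\mathbb H^2$). Hence $|B(x_0,R)|=|B(p,R)|=|\Omega|$, so the radius $\rho$ determined by the condition $|B(x_0,\rho)|=|\Omega|$ must equal $R$.

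With these two observations, Corollary \ref{plane_AB_N_1} yields $\lambda_1(B(p,R),A_{x_0,\nu})=\lambda_1(\Omega,A_{x_0,\nu})\leq \lambda_1(B(x_0,R),A_{x_0,\nu})$, which is the asserted inequality; moreover the equality clause of the corollary shows that equality holds if and only if $\Omega=B(x_0,R)$, that is, if and only if $p=x_0$. I do not expect a substantive obstacle here: the entire analytic content is already packaged in Corollary \ref{plane_AB_N_1} (and ultimately in Theorem \ref{sch}), and the only step requiring attention is the elementary homogeneity remark that pins down the radius of the centered competitor to be exactly $R$.
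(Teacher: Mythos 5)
Your proposal is correct and is exactly the paper's intended route: the paper states this corollary as an immediate consequence of Theorem \ref{ab} via Corollary \ref{plane_AB_N_1}, taking $\Omega=B(p,R)$ with pole $x_0$ (admissible since $\mathbb R^2$ and $\mathbb H^2$ are homogeneous, so any point serves as a pole) and using homogeneity to see that the equal-area centered disk has the same radius $R$. Your added observation on the equality case ($p=x_0$, under the standing assumption $\nu\in(0,\tfrac12]$) is also consistent with the equality clause of Corollary \ref{plane_AB_N_1}.
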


\subsection{Aharonov-Bohm spectrum on domains of $\mathbb S^2$}\label{app_AB_S}

\medskip

For the standard sphere $\mathbb S^2$ of curvature $1$ we have
$$
\theta(r)=\sin(r),
$$
hence a direct application of Theorem \ref{ab} is possible only for domains $\Omega$ such that $|\Omega|\leq|B(x_0,\pi/3)|$ and contained in a hemisphere centered at $x_0$.

\medskip

{\bf Assumption.} Through all this subsection, we shall always assume $\nu\in(0,\frac{1}{2}]$.

\medskip

\medskip 

The condition  $|\Omega|\leq|B(x_0,\pi/3)|$ ensures $\theta'^2\geq\nu^2$ on $B(x_0,R)$, in fact $R\leq\frac{\pi}{3}$ and then $\cos'(r)^2\geq\frac{1}{4}$.
\smallskip

The condition that $\Omega$ is contained in a hemisphere centered at $x_0$ ensures that $\theta'\geq 0$ on $\Omega\cup B(x_0,R)$. 

\medskip

Note that, restrictions to the class of spherical domains for which one usually proves isoperimetric inequalities are natural and common. For example, the Szeg\"o-Weinberger  inequality for the second eigenvalue of the Neumann Laplacian on spherical domains is proved under the assumption that the domain is contained in a hemisphere (\cite{ash_beng}) or that it is simply connected with total area less than $2\pi$ (\cite{bandle}). In the first case, the approach is that of Weinberger for planar domains \cite{weinberger} (which is the one that we have used up to now), while in the second case the approach is that of Szeg\"o by conformal transplantation \cite{szego_1}.

\medskip

The hypothesis of Theorem \ref{ab} are not sufficient to cover the case of spherical domains contained in a hemisphere centered at the pole of the magnetic field, which is the natural counterpart of the results in \cite{ash_beng}.  However, we prove in Theorem \ref{technical_sphere} that the function $F(r)=u'(r)^2+\frac{\nu^2}{\theta(r)^2}u(r)^2$ is decreasing in $(0,R)$ for all $R\in (0,\bar R)$ under suitable assumptions, even if $\theta'<\nu$. Recall that $\bar R$ is the first zero of $\theta'$ and $R$ is such that $|B(x_0,R)|=|\Omega|$. The fact that $F'\leq 0$, together with $\theta'> 0$, implies the hypotheses of Theorem \ref{sch} for $V=\frac{\nu^2}{\theta^2} $ and henceforth that of of Theorem \ref{schr}. In particular, the assumptions of Theorem \ref{technical_sphere} for $\mathbb S^2$ reduce to
\begin{enumerate}[i)]
\item $\theta'>0$
\item $
\nu(\nu+1)\theta^2-\nu^2+\nu^2(\theta')^2+\nu\theta\theta''\geq 0$,
\end{enumerate}
which are clearly satisfied as long as $R\leq\frac{\pi}{2}$.
In view of this, we only need to assume that $\Omega$ is contained in a hemisphere centered at the pole $x_0$. Then we have the expected result

\begin{theorem}\label{ab_S} Let $\Omega$ be a smooth bounded domain contained in a hemisphere centered at $x_0$. Let $B(x_0,R)$ be the disk centered in $x_0$ with the same volume as $\Omega$.  Then
$$
\lambda_1(\Omega,A_{x_0,\nu})\leq \lambda_1(B(x_0,R),A_{x_0,\nu}),
$$
with equality if and only if $\Omega=B(x_0,R)$. 
\end{theorem}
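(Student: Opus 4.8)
The plan is to realize $\lambda_1(\Omega,A_{x_0,\nu})$ as the first Neumann eigenvalue of a Schr\"odinger operator and then invoke Theorem \ref{sch}, using the spherical density $\theta(r)=\sin r$ and the radial potential $V=\frac{\nu^2}{\theta^2}=\frac{\nu^2}{\sin^2 r}$. Since a radial function $u=u(r)$ satisfies $\scal{\nabla u}{A_{x_0,\nu}}=0$ and $\abs{A_{x_0,\nu}}^2=\frac{\nu^2}{\theta^2}$, the magnetic operator acts on such $u$ exactly as $\Delta u+Vu$, so the first (radial) magnetic eigenfunction on the centered disk $B(x_0,R)$ is precisely a first eigenfunction of problem \eqref{schr} with this $V$. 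It therefore suffices to secure the ingredients that drive the comparison argument in the proof of Theorem \ref{sch}.

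First I would record the geometric consequences of the hypothesis. The condition that $\Omega$ lies in a hemisphere centered at $x_0$ means that the radial coordinate stays in $(0,\frac{\pi}{2})$ on $\Omega\cup B(x_0,R)$; in particular $|\Omega|\le 2\pi$ forces $R\le\frac{\pi}{2}$, with $R<\frac{\pi}{2}=\bar R$ unless $\Omega$ is the whole hemisphere. On this range $\theta'=\cos r\ge 0$, so $V$ is smooth, nonnegative, radial and non-increasing away from the pole (Assumptions $1$ and $2$). The existence of a nonnegative, radial, non-decreasing first eigenfunction $u$ on $B(x_0,R)$ (Assumption $4$) is supplied by the appendix, namely Theorem \ref{technical_sphere_0}, valid for every $R$ strictly below the first zero $\bar R=\frac{\pi}{2}$ of $\theta'$.

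The crux is Assumption $3$, which for $\theta=\sin r$ reads $\cos^2 r\ge\nu^2$ and fails once $r$ approaches $\frac{\pi}{2}$; this is exactly why the naive Theorem \ref{ab} only covers small domains. The fix is to abandon the crude sufficient condition of Lemma \ref{lem_rev} and instead prove directly that $F(r)=u'(r)^2+\frac{\nu^2}{\sin^2 r}u(r)^2$ is non-increasing on $(0,R)$, which is the content of Theorem \ref{technical_sphere}. Its two hypotheses specialize on the sphere to $\theta'>0$ and $\nu(\nu+1)\theta^2-\nu^2+\nu^2(\theta')^2+\nu\theta\theta''\ge 0$. A short computation with $\theta=\sin r$ and $\theta''=-\sin r$ collapses the second expression to $\nu^2(\sin^2 r+\cos^2 r)-\nu^2\equiv 0$, so it holds identically, while the first is just $\cos r>0$, valid for $r<\frac{\pi}{2}$. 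Hence $F'\le 0$ on the whole range, bypassing the restrictive inequality $\cos^2 r\ge\nu^2$.

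With $F'\le 0$ in hand, together with $V'\le 0$ and the monotonicity and positivity of $u$, the argument of Theorem \ref{sch} runs verbatim: extending $u$ by the constant $u(R)$ outside $B(x_0,R)$ produces an admissible test function $f\in H^1_V(\Omega)$, the equal-volume condition gives $\int_\Omega f^2\ge\int_B u^2$, and the monotonicity of $F$ lets one bound the energy on $\Omega\cap B^c$ by the energy on $\Omega^c\cap B$, yielding $\lambda_1(\Omega,A_{x_0,\nu})\le\lambda_1(B(x_0,R),A_{x_0,\nu})$. The equality case is inherited from Theorem \ref{sch}: equality forces $|\Omega\cap B^c|=0$, that is $\Omega=B(x_0,R)$. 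The entire difficulty is thus concentrated in establishing $F'\le 0$ outside the range $\cos^2 r\ge\nu^2$; once Theorem \ref{technical_sphere} provides this on the full hemisphere, the isoperimetric conclusion follows at once.
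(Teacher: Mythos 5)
Your proposal is correct and follows essentially the same route as the paper: the paper likewise reduces Theorem \ref{ab_S} to Theorem \ref{sch} with $V=\nu^2/\sin^2 r$, replacing the restrictive condition $\cos^2 r\ge \nu^2$ of Theorem \ref{ab} by the monotonicity $F'\le 0$ supplied by Theorem \ref{technical_sphere}, whose spherical hypothesis (with $\bar\lambda=\nu(\nu+1)$) collapses identically to zero exactly as you computed, with $\theta'=\cos r>0$ on the hemisphere. Your explicit handling of the borderline case $R=\bar R=\tfrac{\pi}{2}$ via the addendum that $\theta^{\nu}=\sin^{\nu} r$ solves \eqref{SL_N} at $\bar R$ is a point the paper leaves implicit, but otherwise the two arguments coincide.
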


Note that this result coincides  with that of Ashbaugh-Benguria \cite{ash_beng}. 

\smallskip

We also deduce the following
\begin{cor}
Let $B(p,R)\subset\mathbb S^2$ be a spherical disk punctured at $x_0\in B(p,R)$. If $R\leq\frac{\pi}{4}$, then
$$
\lambda_1(B(p,R),A_{x_0,\nu})\leq\lambda_1(B(x_0,R),A_{x_0,\nu}),
$$
that is, among all disks of the same measure and radius smaller than $\frac{\pi}{4}$, the first eigenvalue is maximized by the disk punctured at its center.
\end{cor}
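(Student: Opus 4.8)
The plan is to recognize this corollary as an immediate application of Theorem \ref{ab_S}, taking $\Omega=B(p,R)$ with pole $x_0$. Two things must be checked: that $B(x_0,R)$ is genuinely the equal-area disk centered at the pole, and that $\Omega$ satisfies the hemisphere hypothesis of Theorem \ref{ab_S}.

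First I would record that on $\mathbb S^2$ the area of a geodesic disk of radius $R$ equals $2\pi(1-\cos R)$, independently of where the disk is centered. Consequently $|B(p,R)|=|B(x_0,R)|$, so $B(x_0,R)$ is precisely the disk centered at the pole $x_0$ having the same measure as $\Omega=B(p,R)$, as required by Theorem \ref{ab_S}.

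Next I would verify the containment in a hemisphere centered at $x_0$, which is the only place the restriction $R\leq\frac{\pi}{4}$ is used. Since $x_0\in B(p,R)$ we have $\dist(x_0,p)<R$, and every $y\in B(p,R)$ satisfies $\dist(p,y)<R$; the triangle inequality then gives $\dist(x_0,y)\leq\dist(x_0,p)+\dist(p,y)<2R\leq\frac{\pi}{2}$ whenever $R\leq\frac{\pi}{4}$. Hence $B(p,R)$ is contained in the open hemisphere $B(x_0,\frac{\pi}{2})$ centered at the pole, which is exactly the hypothesis of Theorem \ref{ab_S}.

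Applying Theorem \ref{ab_S} then yields $\lambda_1(B(p,R),A_{x_0,\nu})\leq\lambda_1(B(x_0,R),A_{x_0,\nu})$, and its equality clause gives equality if and only if $B(p,R)=B(x_0,R)$, i.e.\ $p=x_0$. There is no genuine analytic obstacle here, since all of the spectral work has already been carried out in Theorem \ref{ab_S}; the only substantive step is the elementary geometric containment, and the bound $2R\leq\frac{\pi}{2}$ is what forces the stated range $R\leq\frac{\pi}{4}$.
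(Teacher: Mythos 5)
Your proposal is correct and matches the paper's intended argument: the corollary is stated there as a direct deduction from Theorem \ref{ab_S}, with the hypothesis $R\leq\frac{\pi}{4}$ serving exactly to guarantee, via the triangle inequality ($\dist(x_0,y)<2R\leq\frac{\pi}{2}$), that the punctured disk lies in the hemisphere centered at the pole, while equality of areas of geodesic disks of the same radius on $\mathbb S^2$ makes $B(x_0,R)$ the correct comparison disk. Your added remark on the equality case ($p=x_0$) is a valid bonus consistent with the equality clause of Theorem \ref{ab_S}.
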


\subsection{Szeg\"o's isoperimetric inequality on spheres}\label{sec:szego}

Theorem \ref{ab_S} is valid for all bounded domains which are contained in a hemisphere centered at the pole $x_0$ of $A_{x_0,\nu}$. We have used the Weinberger's argument for its proof. If we take the Szeg\"o's point of view, we are able to extend the result to the class of simply connected domains on the sphere with area less than $2\pi$. Namely, we have:

\begin{theorem}\label{ab_S_szego} Let $\Omega$ be a bounded and simply connected domain in $\mathbb S^2$ with $|\Omega|\leq 2\pi$ and $-x_0\notin\Omega$, and let $B(x_0,R)$ be the disk in $\mathbb S^2$ centered at $x_0$ with $|B(x_0,R)|=|\Omega|$.  Then
$$
\lambda_1(\Omega,A_{x_0,\nu})\leq \lambda_1(B(x_0,R),A_{x_0,\nu}).
$$
If $\nu\notin\mathbb Z$, equality holds if and only if $\Omega=B(x_0,R)$. 
\end{theorem}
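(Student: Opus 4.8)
The plan is to follow Szeg\H{o}'s strategy of conformal transplantation, in the form adapted to the sphere by Bandle, now carrying along the Aharonov--Bohm potential. We may assume $x_0\in\Omega$, since otherwise the flux of $A_{x_0,\nu}$ inside the simply connected $\Omega$ vanishes, $\lambda_1(\Omega,A_{x_0,\nu})=0$, and the inequality is trivial. By gauge invariance we take $\nu\in(0,\tfrac12]$, so that the first eigenfunction $u=u(r)$ of $B(x_0,R)$ is real, radial, non-negative and non-decreasing, solving \eqref{radialtheta} with $V=\nu^2/\theta^2$ and $\theta=\sin r$. Note that $\abs{\Omega}\le 2\pi$ is equivalent to $R\le\tfrac\pi2=\bar R$, the first zero of $\theta'$, which is exactly the regime in which $u'\ge 0$.

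First I would set up the transplantation. Since $-x_0\notin\Omega$, stereographic projection from $-x_0$ maps $\mathbb S^2\setminus\{-x_0\}$ conformally to the plane with $x_0\mapsto 0$; combining this with the Riemann mapping theorem produces a conformal diffeomorphism $\Psi\colon\Omega\to B(x_0,R)$ fixing the pole, $\Psi(x_0)=x_0$. I take as test function in \eqref{minmax_V} the transplant $v=u\circ\Psi$ (suitably gauged, so that $\abs{v}=\abs{u\circ\Psi}$). The proof then splits into an identity for the magnetic energy in the numerator and a lower bound for the mass in the denominator.

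For the numerator I would invoke the conformal invariance of the Aharonov--Bohm energy proved in Appendix \ref{sec:conformal}: because $A_{x_0,\nu}=\nu\,dt$ is an angular form of flux $\nu$ around $x_0$ and $\Psi$ fixes the pole with winding number one, $\Psi^*A_{x_0,\nu}$ is gauge-equivalent to $A_{x_0,\nu}$, whence $\int_\Omega\abs{\nabla^{A_{x_0,\nu}}v}^2=\int_B\abs{\nabla^{A_{x_0,\nu}}u}^2=\lambda_1(B(x_0,R),A_{x_0,\nu})\int_B u^2$. For the denominator, writing $\Phi=\Psi^{-1}$ and its area Jacobian as $\mu^2$ (so $\Phi^*dA_{\mathbb S^2}=\mu^2\,dA_{\mathbb S^2}$), the area constraint gives $\int_B(\mu^2-1)\,dA_{\mathbb S^2}=0$. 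Since $u$ is radial and non-decreasing, a single integration by parts (using $M(0)=M(R)=0$, where $M(s)=\int_{B(x_0,s)}(\mu^2-1)\,dA_{\mathbb S^2}$) yields $\int_\Omega v^2-\int_B u^2=-2\int_0^R u\,u'\,M\,ds$, so the desired bound $\int_\Omega v^2\ge\int_B u^2$ reduces to the purely geometric claim
$$
\abs{\Phi(B(x_0,s))}_{\mathbb S^2}\le\abs{B(x_0,s)}_{\mathbb S^2}\qquad\text{for all }s\in(0,R).
$$

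Proving this area comparison is the main obstacle, and it is precisely where $\abs{\Omega}\le 2\pi$ enters. I would set $A(s)=\abs{\Phi(B(x_0,s))}_{\mathbb S^2}$ and combine three facts: the coarea formula with Cauchy--Schwarz gives $A'(s)\,2\pi\sin s\ge L(s)^2$, where $L(s)$ is the spherical length of $\Phi(\partial B(x_0,s))$; the spherical isoperimetric inequality gives $L(s)^2\ge A(s)\bigl(4\pi-A(s)\bigr)$; and for the model both inequalities are equalities, so $h(s):=\abs{B(x_0,s)}_{\mathbb S^2}$ solves $h'=\tfrac{h(4\pi-h)}{2\pi\sin s}$ exactly. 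Hence $A$ satisfies $A'\ge\tfrac{A(4\pi-A)}{2\pi\sin s}$ with the same endpoint values as $h$. As $A$ is non-decreasing by inclusion, $A(s)\le A(R)=\abs{\Omega}\le 2\pi$, so the right-hand side $x\mapsto\tfrac{x(4\pi-x)}{2\pi\sin s}$ is non-decreasing on the relevant range; an ODE comparison (if $A-h>0$ somewhere then it would stay positive up to $s=R$, contradicting $A(R)=h(R)$) then forces $A\le h$. The threshold $2\pi$ is exactly where this monotonicity, hence the comparison, breaks down. Chaining the two estimates gives $\lambda_1(\Omega,A_{x_0,\nu})\le\lambda_1(B(x_0,R),A_{x_0,\nu})$. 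Finally, when $\nu\notin\mathbb Z$ one has $u'>0$, so equality throughout forces $M\equiv 0$, i.e. $\Phi$ preserves the area of every concentric cap; this pins $\Phi$ down to an isometry and gives $\Omega=B(x_0,R)$.
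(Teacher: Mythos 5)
Your proposal is correct and follows essentially the same route as the paper: Szeg\H{o}-type conformal transplantation of the radial, increasing first eigenfunction of $B(x_0,R)$ (via stereographic projection from $-x_0$ plus the Riemann mapping theorem), conformal invariance of the magnetic energy for the numerator, and reduction of the denominator bound to the cap-area comparison $\abs{\Phi(B(x_0,s))}\le\abs{B(x_0,s)}$, proved by coarea, Cauchy--Schwarz and the spherical isoperimetric inequality --- exactly the paper's argument with $a(r)$ and $v(r)$ in the stereographic chart, including the same equality analysis. The only (harmless) deviation is in the comparison step: you run a direct ODE comparison using monotonicity of $x\mapsto x(4\pi-x)$ on $[0,2\pi]$, whereas the paper shows $r\mapsto a(r)/\bigl(r^2(4\pi-a(r))\bigr)$ is non-decreasing, which works for all $a<4\pi$; thus in the paper the hypothesis $\abs{\Omega}\le 2\pi$ is needed only to ensure $R\le\bar R=\pi/2$ and hence $u'\ge 0$ (a point you also note at the outset), so your closing remark that $2\pi$ is the threshold for the comparison itself is slightly off, though accurate within your own variant.
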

\begin{proof}
Through the stereographic projection $f$ we identify a point $re^{i t}\in\mathbb S^2$ ($r$ is the distance to $x_0$, and $t$ is the angular coordinate) with $z=f(re^{it})=\tan(r/2)e^{it}\in\mathbb C$. Then, for any function $v$ defined on $\Omega$ we have
$$
\int_{\Omega}v=\int_{f(\Omega)}(v\circ f^{-1})\frac{4}{(1+|z|^2)^2}dz.
$$
Moreover,
$$
|\Omega|=\int_{f(\Omega)}\frac{4}{(1+|z|^2)^2}dz\,,\ \ \  |\partial \Omega|=\int_{f(\partial\Omega)}\frac{2}{1+|z|^2}dz.
$$
Let $u$ be an eigenfunction associated to $\lambda_1(B(x_0,R),A_{x_0,\nu})$. As proved in Theorem \ref{technical_sphere_0}, we know that $u=u(r)$ is real, radial, and can be chosen such that $u,u'>0$ on $(0,R)$. In fact, from the our assumptions we have $R\leq\frac{\pi}{2}$, thus Theorem \ref{technical_sphere_0} applies. Then, $u\circ f^{-1}$ is positive and increasing as well.

\medskip

Let now $g:f(B(x_0,R))\rightarrow f(\Omega)$ be a conformal map with $f(0)=0$. To simplify our notation, we will set $\tilde\Omega=f(\Omega)$ and $\tilde B=f(B(x_0,R))$. Note that $\tilde B$ is a ball centered at $0$ of radius $T:=\tan(R/2)$.

\medskip

We set
$$
\hat u:=u\circ f^{-1}\circ g^{-1}\circ f
$$
Then $\hat u$ is a function defined in $\Omega$, the conformal transplantation of $u$ through the map $f^{-1}\circ g^{-1}\circ f$. The conformal invariance of the Aharonov-Bohm energy, proved in Appendix \ref{sec:conformal} tells that
$$
\int_{\Omega}|\nabla^{A_{x_0,\nu}}\hat u|^2=\int_{B(x_0,R)}|\nabla^{A_{x_0,\nu}}u|^2,
$$
then $\hat u\in H^1_{A_{x_0,\nu}}(\Omega)$ and it is a suitable test function for the min-max principle \eqref{minmax_N} for $\lambda_1(\Omega,A_{x_0,\nu})$. In particular
\begin{equation}\label{minmax_szego}
\lambda_1(\Omega,A_{x_0,\nu})\leq\frac{\int_{\Omega}|\nabla^{A_{x_0,\nu}}\hat u|^2}{\int_{\Omega}{\hat u}^2}=\frac{\int_{B(x_0,R)}|\nabla^{A_{x_0,\nu}} u|^2}{\int_{\Omega}{\hat u}^2}.
\end{equation}
If
\begin{equation}\label{claim_szego}
\int_{\Omega}{\hat u}^2\geq\int_{B(x_0,R)}u^2
\end{equation}
then we conclude from \eqref{minmax_szego}
$$
\lambda_1(\Omega,A_{x_0,\nu})\leq\frac{\int_{B(x_0,R)}|\nabla^{A_{x_0,\nu}} u|^2}{\int_{B(x_0,r)}u^2}=\lambda_1(B(x_0,R),A_{x_0,\nu}).
$$
which is what we want. Note that \eqref{claim_szego} is equivalent to
\begin{equation}\label{claim_szego_2}
\int_{\tilde B}U|g'|^2(\sigma\circ g)\geq\int_{\tilde B}U\sigma
\end{equation}
where
$$
U=(u\circ f^{-1})^2
$$
and $\sigma(z)=\frac{4}{(1+|z|^2)^2}$. We note that $U'>0$ on $(0,T)=(0,\tan(R/2))$. Let us set $B_r=B(0,r)$ the disk in $\mathbb C$ centered at $0$ of radius $r$. In particular $\tilde B=B_T=B_{\tan(R/2)}$. We define
$$
a(r):=\int_{B_r}|g'|^2(\sigma\circ g)\,,\ \ \ v(r)=\int_{B_r}\sigma=2\pi\int_0^r\frac{4r}{(1+r^2)^2}dr=\frac{4\pi r^2}{1+r^2}.
$$
Then, since $|\Omega|=|B(x_0,R)|$, we have $a(T)=v(T)$.

\medskip

We write a differential inequality for $a(r)$:
\begin{equation}\label{dineq_0}
a'(r)=\int_{\partial B_r}|g'|^2(\sigma\circ g)\geq\frac{\left(\int_{\partial B_r}|g'|(\sigma\circ g)^{1/2}\right)^2}{2\pi r}
\end{equation}
If $\Omega_r=(g\circ\sigma)^{-1}(B_r)\in\mathbb S^2$, then
$$
|\partial\Omega_r|=\int_{\partial B_r}|g'|(\sigma\circ g)^{1/2}\,,\ \ \ |\Omega_r|=\int_{B_r}|g'|^2(\sigma\circ g)=a(r)
$$
The isoperimetric inequality $|\partial\Omega_r|^2\geq|\Omega_r|(4\pi-|\Omega_r|)$ holds for spherical domains, hence from \eqref{dineq_0}
\begin{equation}\label{dineq}
a'(r)\geq\frac{a(r)(4\pi-a(r))}{2\pi r}
\end{equation}
Then, the function
$$
r\mapsto\frac{a(r)}{r^2(4\pi-a(r))}
$$
is not decreasing as long as $a(r)\leq 4\pi$, just take the derivative and use \eqref{dineq}. We see now that
$$
\frac{v(r)}{r^2(4\pi-v(r))}=1
$$
for all $r$, and in particular, since $v(T)=a(T)$,
$$
\frac{a(r)}{r^2(4\pi-a(r))}\leq \frac{a(T)}{T^2(4\pi-a(T))}=1
$$
and then
$$
a(r)\leq\frac{4\pi r^2}{1+r^2}=\int_{B_r}\sigma=v(r).
$$

This gives the desired result. In fact, since $U$ is radial with $U'>0$,
\begin{multline}\label{bandle_step1_sphere}
\int_{\tilde B}U|g'|^2(\sigma\circ g)=\int_0^{T}U(r)a'(r)dr\\
=U(T)a(T)-\int_0^{T}U'(r)a(r)
=U(T)v(T)-\int_0^{T}U'(r)a(r)\\
\geq U(T)v(T)-\int_0^{T}U'(r)v(r)=\int_{\tilde B}U\sigma.
\end{multline}
This proves \eqref{claim_szego_2} and then the isoperimetric inequality for $\lambda_1(\Omega,A_{x_0,\nu})$. Finally, if equality holds in the isoperimetric inequality for $\lambda_1(\Omega,A_{x_0,\nu})$, then equality holds in the isoperimetric inequality $|\partial\Omega_r|^2\geq|\Omega_r|(4\pi-|\Omega_r|)$ used in \eqref{dineq_0} for all $r$, hence all $\Omega_r$ are spherical disks, and $\Omega=\Omega_T$ as well.

\end{proof}

\section{The magnetic Steklov problem: Brock's and Weinstock's inequalities}\label{sec:brockweinstock}

 We now focus on the magnetic Steklov problem on a bounded smooth domain  $\Omega\subset\mathbb R^2$, namely problem \eqref{AB}. The min-max principle for the first eigenvalue reads
 \begin{equation}\label{minmax_S}
 \sigma_1(\Omega,A_{x_0,\nu})=\inf_{0\ne u\in H^1_{A_{x_0,\nu}}(\Omega,\mathbb C)}\frac{\int_{\Omega}|\nabla^{A_{x_0,\nu}}u|^2}{\int_{\partial\Omega}|u|^2},
 \end{equation}
where $H^1_{A_{x_0,\nu}}(\Omega,\mathbb C)$ denotes the standard magnetic Sobolev space (see Appendix \ref{functional} for the precise definition).

\smallskip

If we consider the maximisation problem for the lowest eigenvalue under volume constraint, we have Brock's Theorem for $\sigma_1(\Omega,A_{x_0,\nu})$:

\begin{thm}\label{thm:brock} Let $\Omega$ be a smooth bounded domain in $\mathbb  R^2$, $x_0\in\mathbb R^2$ a fixed pole, and let $B(x_0,r)$ be the disk with the same measure of $\Omega$. Let $\nu\in(0,\frac 12]$. Then:
$$
\sigma_1(\Omega,A_{x_0,\nu})\leq \sigma_1(B(x_0,R),A_{x_0,\nu})=
\dfrac{\sqrt{\pi}\nu}{\abs{\Omega}^{\frac 12}}.
$$
Equality holds if and only if $\Omega=B(x_0,r)$.
\end{thm}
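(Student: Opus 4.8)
The plan is to bound $\sigma_1(\Omega,A_{x_0,\nu})$ from above by testing the min--max principle \eqref{minmax_S} with the ground state of the centred disk, viewed as an explicit function on $\Omega$. By the analysis in Appendix \ref{disk_S}, for $\nu\in(0,\tfrac12]$ the first Steklov eigenfunction on $B(x_0,R)$ is real, radial, and, placing the pole at the origin so that $r=|x|$, equals $u(x)=r^\nu$, with $\sigma_1(B(x_0,R),A_{x_0,\nu})=\nu/R$. Since $u$ is real and radial and $|A_{x_0,\nu}|^2=\nu^2/r^2$, a direct computation gives $|\nabla^{A_{x_0,\nu}}u|^2=|\nabla u|^2+u^2|A_{x_0,\nu}|^2=2\nu^2r^{2\nu-2}$, which is integrable near the pole because $\nu>0$; hence $u\in H^1_{A_{x_0,\nu}}(\Omega,\mathbb C)$ and
$$
\sigma_1(\Omega,A_{x_0,\nu})\le\frac{\int_\Omega 2\nu^2 r^{2\nu-2}}{\int_{\partial\Omega}r^{2\nu}}.
$$
On $B(x_0,R)$ this quotient equals exactly $\nu/R$, so it suffices to show that replacing $B(x_0,R)$ by an arbitrary $\Omega$ of the same area can only increase the numerator and only decrease the denominator.

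For the numerator I would use a rearrangement (bathtub) argument: the weight $r^{2\nu-2}$ is strictly radially decreasing, since $2\nu-2<0$, so among all sets of volume $|\Omega|=|B(x_0,R)|$ the integral $\int_\Omega r^{2\nu-2}$ is maximised by the centred disk. This gives $\int_\Omega 2\nu^2 r^{2\nu-2}\le\int_{B(x_0,R)}2\nu^2 r^{2\nu-2}=2\pi\nu R^{2\nu}$, with equality if and only if $\Omega=B(x_0,R)$ up to a null set.

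For the denominator the required estimate is the weighted isoperimetric inequality $\int_{\partial\Omega}r^{2\nu}\,ds\ge\int_{\partial B(x_0,R)}r^{2\nu}\,ds=2\pi R^{2\nu+1}$, i.e. that the centred disk minimises the weighted perimeter with radially increasing density $r^{2\nu}$ under a fixed-volume constraint. This is the heart of the matter. When $\nu=\tfrac12$ it is immediate: the field $Z=r^{2\nu-1}x=x$ satisfies $|Z|=r^{2\nu}=r$ and $\operatorname{div}Z=2$, so by the divergence theorem $\int_{\partial\Omega}r\,ds\ge\int_{\partial\Omega}\langle Z,N\rangle\,ds=2|\Omega|=2\pi R^2$. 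For $\nu<\tfrac12$ the same calibration fails, because the corresponding field $r^{2\nu-1}x$ has divergence $(2\nu+1)r^{2\nu-1}$ which is no longer bounded below away from the origin; I expect this to be the main obstacle. The inequality must instead be obtained by spherical (circular) symmetrisation, which preserves area and, because the density is radial, does not increase the weighted perimeter, thereby reducing the problem to a one-dimensional minimisation over the angular-width profile whose minimiser is the centred disk. The equality case again forces $\Omega=B(x_0,R)$.

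Combining the two estimates yields
$$
\sigma_1(\Omega,A_{x_0,\nu})\le\frac{2\pi\nu R^{2\nu}}{2\pi R^{2\nu+1}}=\frac{\nu}{R}=\sigma_1(B(x_0,R),A_{x_0,\nu}),
$$
and, using $|\Omega|=\pi R^2$, the right-hand side rewrites as $\sqrt{\pi}\,\nu/|\Omega|^{1/2}$. For rigidity, if equality holds then the chain $\sigma_1(\Omega,A_{x_0,\nu})\le \text{(quotient)}\le\sigma_1(B(x_0,R),A_{x_0,\nu})$ collapses, which forces both the rearrangement and the weighted isoperimetric inequality to be equalities; since each is an equality only for the centred disk, we conclude $\Omega=B(x_0,R)$. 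The substantive work is thus concentrated entirely in the weighted isoperimetric inequality for $\nu<\tfrac12$.
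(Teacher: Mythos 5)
Your proposal reproduces the paper's argument almost step for step: the same test function $u=r^{\nu}$ from Appendix \ref{disk_S} in the min--max principle \eqref{minmax_S}, the same computation $\abs{\nabla^{A_{x_0,\nu}}u}^2=2\nu^2r^{2\nu-2}$, the same bathtub comparison $\int_{\Omega\cap B^c}r^{2\nu-2}\leq R^{2\nu-2}\abs{\Omega\cap B^c}=R^{2\nu-2}\abs{\Omega^c\cap B}\leq\int_{\Omega^c\cap B}r^{2\nu-2}$ for the numerator, and the same weighted isoperimetric inequality $\int_{\partial\Omega}r^{2\nu}\geq 2\pi^{\frac{1-2\nu}{2}}\abs{\Omega}^{\frac{2\nu+1}{2}}=2\pi R^{2\nu+1}$ for the denominator; your rigidity bookkeeping (equality in the quotient forces equality in both estimates) is also sound. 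The single point of divergence is how the denominator inequality is established. The paper does not prove it: it invokes the known result of \cite{brock2} that for every $p\geq 0$ the centered disk is the unique minimizer of $\int_{\partial\Omega}r^{p}$ among domains of fixed area, applied with $p=2\nu$; uniqueness there also settles the equality case. You correctly identify this as the heart of the matter, and your calibration proof for $\nu=\tfrac12$ (the field $Z=x$, $\operatorname{div}Z=2$, $\langle Z,N\rangle\leq r$) is complete and nice. For $\nu<\tfrac12$, however, your circular-symmetrization sketch is not yet a proof: symmetrization about a ray only reduces the competition to sets symmetric about that ray (granting, which itself needs an argument, that the weighted perimeter does not increase --- plausible since the density $r^{2\nu}$ is constant on each circle), and the residual one-dimensional minimization over angular-width profiles, which you assert but do not carry out, is essentially the entire difficulty; note that for a radially \emph{increasing} perimeter density with unweighted area constraint the optimality of the centered disk is genuinely nontrivial. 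Since the needed inequality is a citable theorem, this is a gap in self-containedness rather than a flaw in strategy: replacing your sketch by the reference to \cite{brock2} makes your proof coincide with the paper's.
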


\begin{proof}  From \eqref{minmax_S} we have:
$$
\sigma_1(\Omega,A_{x_0,\nu})\leq\frac{\int_{\Omega}|\nabla^Au|^2}{\int_{\partial\Omega}|u|^2}
$$
for all $u\in H^1_A(\Omega,\mathbb C)$. Then we choose $u=r^{\nu}$ which is the first eigenfunction for any disk centered at $x_0$ (see Appendix \ref{disk_S}). 
$$
|\nabla^A u|^2=2\nu^2r^{2\nu-2}.
$$
In particular, since $\nu\in\left(0,\frac{1}{2}\right]$,
$$
\int_{\Omega}|\nabla^A u|^2=2\nu^2\int_{\Omega}r^{2\nu-2}\leq 2\nu^2\int_{B(0,R)}r^{2\nu-2}=2\pi \nu R^{2\nu}.
$$
In fact, 
$$
\int_{\Omega\cap B(0,R)^c}r^{2\nu-2}\leq R^{2\nu-2}|\Omega\cap B(0,R)^c|=R^{2\nu-2}|\Omega^c\cap B(0,R)|\leq\int_{\Omega^c\cap B(0,R)}r^{2\nu-2}.
$$
Here $R=\frac{|\Omega|^{1/2}}{\pi^{1/2}}$ because $B(x_0,R)$ has the same volume of $\Omega$.

We recall a well-known fact: for all $p\geq 0$,
$$
\int_{\partial\Omega}r^p\geq 2\pi^{\frac{1-p}{2}}|\Omega|^{\frac{p+1}{2}}.
$$
When $p=0$ this is just the classical isoperimetric inequality. For $p>0$ this inequality says that the infimum of $\int_{\partial\Omega}r^p$ among all domains with fixed measure is attained by the ball centered at $x_0$, which is the unique minimizer. This result is proved in \cite{brock2}. Using $u$ as test function for $\sigma_1(\Omega,A_{x_0,\nu})$ and the isoperimetric inequality above with $p=2\nu$ we obtain
\begin{equation}\label{br1}
\sigma_1(\Omega,A_{x_0,\nu})\leq \frac{2\nu\pi R^{2\nu}}{2\pi^{\frac{1-2\nu}{2}}|\Omega|^{\frac{2\nu+1}{2}}},
\end{equation}
that is
\begin{equation}\label{br2}
|\Omega|^{\frac{1}{2}}\sigma_1(\Omega,A_{x_0,\nu})\leq\pi^{\frac{1}{2}}\nu.
\end{equation}
\end{proof}

By gauge invariance (see Appendix \ref{gauge_inv}), if $\nu\notin(0,\frac 12]$, we can replace $\nu$ in \eqref{br1} and \eqref{br2} by $\inf_{k\in\mathbb Z}|\nu-k|$.
\medskip

If we consider instead the problem of maximising the lowest eigenvalue under perimeter constraint, we have Weinstock's Theorem for $\sigma_1(\Omega,A_{x_0,\nu})$:


\begin{theorem}\label{thm:weinstock} Let $\Omega$ be bounded simply connected domain in $\mathbb  R^2$, $x_0\in\mathbb R^2$ be a fixed pole, and let $B(x_0,r)$ the disk with the same perimeter of $\Omega$. Let $\nu\in(0,\frac 12]$. Then:
$$
\sigma_1(\Omega,A_{x_0,\nu})\leq \sigma_1(B(x_0,R),A_{x_0,\nu})= \dfrac{2\pi}{\abs{\bd\Omega}}\nu.
$$
Equality holds if and only if $\Omega=B(x_0,R)$.
\end{theorem}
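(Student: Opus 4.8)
The plan is to adapt the conformal transplantation argument of Weinstock, the decisive tool being the conformal invariance of the Aharonov--Bohm energy proved in Appendix~\ref{sec:conformal}. Write $A=A_{x_0,\nu}$. If $x_0\notin\Omega$ then the flux vanishes, $\nu=0$, and \eqref{Weinstock} is the trivial identity $0=0$; so I assume $x_0\in\Omega$, and by gauge invariance (Appendix~\ref{gauge_inv}) I may take $\nu\in(0,\tfrac12]$. Put $R=\frac{|\partial\Omega|}{2\pi}$, so that $B(x_0,R)$ is the disk with the same perimeter as $\Omega$ and, by Appendix~\ref{disk_S}, $\sigma_1(B(x_0,R),A)=\frac{\nu}{R}=\frac{2\pi\nu}{|\partial\Omega|}$. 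Since $\Omega$ is bounded and simply connected, the Riemann mapping theorem furnishes a conformal diffeomorphism $\Phi\colon B(x_0,R)\to\Omega$, which I normalise so that $\Phi(x_0)=x_0$, i.e. the centre of the model disk maps to the pole. This normalisation is crucial: it forces $\Phi$ to have winding number one about the pole, so the flux of $A$ is preserved and the conformal invariance of the magnetic energy may be applied.

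Next I would transplant the radial first eigenfunction. On $B(x_0,R)$ the first Steklov eigenfunction is $u=r^\nu$ (Appendix~\ref{disk_S}), with $|\nabla^{A}u|^2=2\nu^2 r^{2\nu-2}$ and $\int_{B(x_0,R)}|\nabla^{A}u|^2=2\pi\nu R^{2\nu}$. Set $\hat u=u\circ\Phi^{-1}\in H^1_{A}(\Omega,\mathbb C)$. Conformal invariance of the Aharonov--Bohm energy gives
$$\int_{\Omega}|\nabla^{A}\hat u|^2=\int_{B(x_0,R)}|\nabla^{A}u|^2=2\pi\nu R^{2\nu}.$$
For the denominator I would exploit that $u\equiv R^\nu$ is constant on $\partial B(x_0,R)$: changing variables by $\Phi$ and using $\int_{\partial B(x_0,R)}|\Phi'|\,ds=|\partial\Omega|$,
$$\int_{\partial\Omega}|\hat u|^2=R^{2\nu}\int_{\partial B(x_0,R)}|\Phi'|\,ds=R^{2\nu}|\partial\Omega|.$$
Inserting $\hat u$ into the min-max principle \eqref{minmax_S} then yields
$$\sigma_1(\Omega,A)\le\frac{2\pi\nu R^{2\nu}}{R^{2\nu}|\partial\Omega|}=\frac{2\pi\nu}{|\partial\Omega|},$$
which is \eqref{Weinstock}. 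It is worth stressing that, in contrast with the classical Weinstock proof, no balancing (Hersch-type) normalisation of the test function is needed: because $\nu\notin\mathbb Z$ the eigenvalue $\sigma_1$ is positive and the single radial eigenfunction is already constant on the boundary circle.

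For the equality discussion I would argue that equality in \eqref{Weinstock} forces $\hat u$ to attain the infimum in \eqref{minmax_S}, hence to be a first eigenfunction of $(\Omega,A)$. Transporting its Steklov condition $\langle\nabla^{A}\hat u,N\rangle=\sigma_1\hat u$ back through $\Phi$ — using that the magnetic normal flux $\langle\nabla^{A}\cdot,N\rangle\,ds$ is conformally invariant while $\hat u=R^\nu$ and $ds_\Omega=|\Phi'|\,ds_B$ along the boundary — turns the boundary condition into $\nu R^{\nu-1}=\sigma_1 R^\nu|\Phi'|$ on $\partial B(x_0,R)$, that is $|\Phi'|\equiv1$ there. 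Since $\Phi$ is univalent, $\Phi'$ never vanishes and $\log|\Phi'|$ is harmonic on $B(x_0,R)$; having zero boundary values it vanishes identically, so $\Phi'$ is a unimodular constant and $\Phi$ is a rotation about $x_0$. Hence $\Omega=B(x_0,R)$.

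The principal obstacle, and the heart of the argument, is the conformal invariance of the Aharonov--Bohm energy with the pole sent to the pole, which is precisely what makes the radial eigenfunction an admissible, energy-preserving test function; this is supplied by Appendix~\ref{sec:conformal}. A secondary point to be checked is the regularity of the Riemann map up to the smooth boundary of $\Omega$, needed to justify the boundary change of variables and the equality analysis; this is guaranteed by classical regularity of conformal maps onto smooth domains.
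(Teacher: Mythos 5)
Your proposal follows essentially the same route as the paper's proof: transplant the radial first eigenfunction $r^{\nu}$ by the Riemann map fixing the pole, invoke the conformal invariance of the Aharonov--Bohm energy (Appendix \ref{sec:conformal}), and exploit that the transplanted function has constant modulus on $\partial\Omega$ so that no Hersch-type balancing is needed. One step, however, is stated too loosely and, as written, is not what the invariance lemma gives: the display $\int_{\Omega}|\nabla^{A}\hat u|^{2}=\int_{B(x_0,R)}|\nabla^{A}u|^{2}$ with the \emph{same} canonical potential $A$ on both sides. Proposition \ref{cienergy} yields the energy identity for the pulled-back potential $\hat A=(\Phi^{-1})^{\star}A$ on $\Omega$, not for $A$ itself. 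Since $\hat A$ is again harmonic with flux $\nu$ around the pole (this is where your winding-number remark enters), it differs from $A$ by $\nabla\psi$ with $e^{i\psi}$ well defined, and one must explicitly invoke gauge invariance (Lemma \ref{gaugeinvariance}) to close the argument --- exactly as the paper does, writing $\sigma_1(\Omega,A)=\sigma_1(\Omega,\hat A)$ and testing the $\hat A$-problem with $\hat u$; equivalently, you may test the $A$-problem with $e^{-i\psi}\hat u$, whose boundary modulus is still the constant $R^{\nu}$, so the final bound is unchanged. You cite gauge invariance only to normalise $\nu\in(0,\tfrac12]$, so this needed second use is a genuine (if one-line) omission rather than a mere stylistic difference.

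With that correction your inequality argument coincides with the paper's (the paper maps $\Omega$ onto the unit disk rather than the perimeter-matched disk onto $\Omega$; the direction of the map is immaterial), and your numerical computations ($|\nabla^{A}r^{\nu}|^{2}=2\nu^{2}r^{2\nu-2}$, total energy $2\pi\nu R^{2\nu}$, boundary norm $R^{2\nu}|\partial\Omega|$) are correct. Your equality discussion --- equality forces the (gauge-corrected) transplant to be a first eigenfunction, the transported Steklov condition gives $|\Phi'|\equiv 1$ on the boundary circle, and harmonicity of $\log|\Phi'|$ then forces $\Phi$ to be a rotation --- is sound and in fact goes beyond the paper's written proof, which stops at the inequality and leaves the rigidity statement undetailed; your acknowledgement that boundary regularity of the Riemann map onto a smooth domain is needed for the boundary change of variables is also appropriate.
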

\begin{proof}
Assume for simplicity that $x_0=0$. Take the unique conformal map $\Phi:\Omega\to B$, where $B$ is the unit disk centered at the origin, and with $\Phi(0)=0$, and fix the eigenfunction $u=r^{\nu}$ of the unit disk, associated to $\sigma_1(B,A_{x_0,\nu})=\nu$. We refer to Appendix \ref{disk_S} for more details. We take as test-function
$$
\hat u=u\circ\Phi.
$$
Then:
$$
\sigma_1(\Omega,A)\int_{\bd\Omega}\abs{\hat u}^2=
\sigma_1(\Omega,\hat A)\int_{\bd\Omega}\abs{\hat u}^2
\leq\int_{\Omega}\abs{d^{\hat A}\hat u}^2
=\int_D\abs{d^Au}^2
=\sigma_1(D,A)\int_{\bd D}\abs{u}^2
=2\pi\nu
$$
where, on the first line, we used gauge invariance (Lemma \ref{gaugeinvariance})  and in the third we used the conformal invariance of the magnetic energy (Lemma \ref{cienergy}). Here $\hat A=\Phi^{\star}A$. On the other hand, $\hat u=1$ on $\bd\Omega$ so that 
$$
\int_{\bd\Omega}\abs{\hat u}^2=\abs{\bd\Omega}
$$
The conclusion is
$$
\sigma_1(\Omega,A_{x_0,\nu})\leq \dfrac{2\pi\nu}{\abs{\bd\Omega}}
$$
as asserted.
\end{proof}

\appendix


\section{Setting}\label{functional}

In this section we prove that problems \eqref{AB_N} and \eqref{AB} admit  purely discrete spectrum made of non-negative eigenvalues of finite multiplicity diverging to $+\infty$. Through all this section $\Omega$ will denote a bounded domain in $\mathbb R^2$. We will also prove the gauge invariance property of the two problems.

\subsection{Functional setting}

 Let $H^1(\Omega,\mathbb C)$ be the standard Sobolev space of complex-valued functions. As for problem \eqref{AB_N}, we shall assume that $\Omega$ is such that the embedding $H^1(\Omega,\mathbb C)\subset L^2(\Omega,\mathbb C)$ is compact. As for problem \eqref{AB} we shall assume that the trace operator $\gamma_0:H^1(\Omega,\mathbb C)\rightarrow L^2(\partial\Omega,\mathbb C)$ is compact. In both cases the compactness is guaranteed if $\Omega$ is smooth (Lipschitz is enough).

Let $x_0\in\mathbb R^2$ and let $C^{\infty}_{x_0}(\Omega,\mathbb C)$ be the space of smooth functions on $\Omega$ vanishing in a neighborhood of $x_0$. We introduce the magnetic Sobolev space $H^1_A(\Omega,\mathbb C)$ defined as the closure of $\{u\in C^{\infty}_{x_0}(\Omega,\mathbb C):\nabla^Au,u\in L^2(\Omega,\mathbb C)\}$ with respect to the norm
$$
\|u\|_A^2:=\int_{\Omega}|\nabla^Au|^2 + |u|^2 \,,\ \ \ \forall u\in C^{\infty}_0(\Omega,\mathbb C):\nabla^Au,u\in L^2(\Omega,\mathbb C).
$$ 
To simplify the notation, through all this section we shall denote by $A$ the Aharonov-Bohm potential $\nu A_0$ with pole at $x_0$.
We will make use of equivalent norms, suitable to our problems. One fundamental equivalent norm is given in the following proposition.

\begin{prop}\label{eq1}
If $\nu\notin\mathbb Z$, the norm $\|u\|_A$ is equivalent to the following norm:
$$
\|u\|_{A'}^2:=\int_{\Omega}|\nabla u|^2 + |u|^2 +\frac{|u|^2}{|x-x_0|^2}.
$$
\end{prop}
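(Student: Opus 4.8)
The plan is to establish the two one-sided bounds $c\,\norm{u}_{A'}\le \norm{u}_A\le C\,\norm{u}_{A'}$ on the common dense space $C^\infty_{x_0}(\Omega,\mathbb{C})$ and then pass to the closures. Throughout I place the pole at the origin, write $r=\abs{x-x_0}$, and use that for $A=\nu A_0$ one has $\abs{A}^2=\nu^2/r^2$. The inequality $\norm{u}_A\le C\,\norm{u}_{A'}$ is elementary and holds for every $\nu$: from $\nabla^A u=\nabla u-iuA$ and the triangle inequality I get $\abs{\nabla^A u}^2\le 2\abs{\nabla u}^2+2\abs{u}^2\abs{A}^2=2\abs{\nabla u}^2+2\nu^2 r^{-2}\abs{u}^2$, and integrating gives $\norm{u}_A^2\le(2+2\nu^2)\,\norm{u}_{A'}^2$.

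The reverse bound $\norm{u}_{A'}\le C\,\norm{u}_A$ is the substantial direction, and it is here that the hypothesis $\nu\notin\mathbb{Z}$ is used, through a \emph{magnetic Hardy inequality}. Writing the magnetic gradient in polar coordinates $(r,t)$ around $x_0$, where $A=\nu\,dt$, I will use the identity $\abs{\nabla^A u}^2=\abs{\partial_r u}^2+r^{-2}\abs{\partial_t u-i\nu u}^2$. Expanding $u(r,\cdot)=\sum_{k\in\mathbb{Z}}c_k(r)e^{ikt}$ on each circle and applying Parseval gives, for each fixed $r$, the pointwise estimate $\int_0^{2\pi}\abs{\partial_t u-i\nu u}^2\,dt=2\pi\sum_k(k-\nu)^2\abs{c_k(r)}^2\ge d_\nu^2\int_0^{2\pi}\abs{u}^2\,dt$, where $d_\nu:=\inf_{k\in\mathbb{Z}}\abs{\nu-k}>0$ precisely because $\nu\notin\mathbb{Z}$. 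Crucially, no integration by parts in $r$ is involved, so no boundary terms at $r=0$ or on $\partial\Omega$ appear; integrating in $r$ over any disk $B(x_0,\rho)\subset\Omega$ then yields $\int_{B(x_0,\rho)}\abs{\nabla^A u}^2\ge d_\nu^2\int_{B(x_0,\rho)}r^{-2}\abs{u}^2$.

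To globalize I localize near the singularity. Fix $\rho$ with $B(x_0,\rho)\subset\Omega$. On $B(x_0,\rho)$ the Hardy inequality bounds $\int r^{-2}\abs{u}^2$ by $d_\nu^{-2}\int\abs{\nabla^A u}^2\le d_\nu^{-2}\norm{u}_A^2$, while on $\Omega\setminus B(x_0,\rho)$ one has $r^{-2}\le\rho^{-2}$, so $\int_{\Omega\setminus B(x_0,\rho)}r^{-2}\abs{u}^2\le\rho^{-2}\int_\Omega\abs{u}^2\le\rho^{-2}\norm{u}_A^2$; adding the two gives $\int_\Omega r^{-2}\abs{u}^2\le C_1\norm{u}_A^2$. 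Then, from $\nabla u=\nabla^A u+iuA$, I get $\int_\Omega\abs{\nabla u}^2\le 2\norm{u}_A^2+2\nu^2\int_\Omega r^{-2}\abs{u}^2\le C_2\norm{u}_A^2$, and combining with $\int_\Omega\abs{u}^2\le\norm{u}_A^2$ produces $\norm{u}_{A'}^2\le C\,\norm{u}_A^2$. Since $C^\infty_{x_0}(\Omega,\mathbb{C})$ is dense for both norms, the equivalence extends to the closures.

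The only genuine obstacle is the magnetic Hardy inequality of the second paragraph, and the role of the hypothesis is transparent there: the constant $d_\nu$ degenerates to $0$ exactly when $\nu\in\mathbb{Z}$, in which case the Aharonov–Bohm potential is gauge-equivalent to $0$ and no scalar Hardy inequality controls $\int_\Omega r^{-2}\abs{u}^2$ in dimension two. Thus $\nu\notin\mathbb{Z}$ is used in an essential way, and one should expect the conclusion to fail for integer flux.
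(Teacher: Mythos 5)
Your proof is correct and follows essentially the same route as the paper: the substantial direction rests on the Laptev--Weidl magnetic Hardy inequality, which the paper (Lemma \ref{h}) proves by exactly your Fourier decomposition on circles, with the spectral gap $d_\nu^2=\inf_{k\in\mathbb Z}(\nu-k)^2>0$ for $\nu\notin\mathbb Z$. The only difference is that you spell out the globalization (splitting $\Omega$ into $B(x_0,\rho)$ and its complement, then recovering $\int_\Omega\abs{\nabla u}^2$ from $\nabla u=\nabla^A u+iuA$) that the paper dismisses as ``standard.''
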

If $\Omega$ does not contain the pole $x_0$, then the equivalence is immediate to check. In any case, the proof of Proposition \ref{eq1} is standard, and follows from the well-known Hardy-type inequality proved in \cite{laptev_weidl}:

\begin{lemme}\label{h}
For any $R>0$ and any $u\in H^1(B(x_0,R),\mathbb C)$ we have
\begin{equation}\label{hardy}
\int_{B(x_0,R)}|\nabla^A u|^2dx\geq C^2\int_{B(x_0,R)}\frac{|u|^2}{|x-x_0|^2},
\end{equation}
where $C=\inf\{|\nu-k|:k\in\mathbb Z\}$. In particular, if $\nu\in\left(0,\frac{1}{2}\right]$, then $C=\nu$.
\end{lemme}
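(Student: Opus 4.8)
The plan is to separate variables in the angular coordinate: this diagonalizes the magnetic energy and reduces the estimate to the elementary bound $(k-\nu)^2\ge C^2$ on each Fourier mode. This is the classical argument of Laptev--Weidl \cite{laptev_weidl}.

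First I would translate so that $x_0=0$ and pass to polar coordinates $(r,t)$ on $B(0,R)\setminus\{0\}$, where the Euclidean metric is $dr^2+r^2\,dt^2$ and $A=\nu\,dt$. The magnetic differential is then $d^Au=du-i\nu u\,dt$, and in the orthonormal coframe $\{dr,\,r\,dt\}$ one obtains the pointwise identity
\[
|\nabla^A u|^2=|\partial_r u|^2+\frac{1}{r^2}\,|\partial_t u-i\nu u|^2 .
\]
The decisive feature is that the potential enters only through the angular covariant derivative $\partial_t u-i\nu u$.

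Next I would expand $u$ in its angular Fourier series $u(r,t)=\sum_{k\in\mathbb Z}u_k(r)e^{ikt}$, so that $\partial_t u-i\nu u=i\sum_k(k-\nu)u_k(r)e^{ikt}$. Parseval's identity on the circle gives, for almost every $r$,
\[
\int_0^{2\pi}|\partial_r u|^2\,dt=2\pi\sum_k|u_k'(r)|^2,\qquad
\int_0^{2\pi}|\partial_t u-i\nu u|^2\,dt=2\pi\sum_k(k-\nu)^2|u_k(r)|^2 .
\]
Integrating against $r\,dr$ and interchanging sum and integral --- legitimate by Tonelli, as the integrands are non-negative, so that the identity holds in $[0,+\infty]$ --- I would obtain
\[
\int_{B(0,R)}|\nabla^A u|^2=2\pi\sum_{k\in\mathbb Z}\int_0^R\Big(|u_k'|^2+\frac{(k-\nu)^2}{r^2}|u_k|^2\Big)r\,dr .
\]
Discarding the non-negative term $|u_k'|^2$, using $(k-\nu)^2\ge C^2$ for every $k$, and applying Parseval once more in the form $\int_0^{2\pi}|u|^2\,dt=2\pi\sum_k|u_k|^2$, I would recover
\[
\int_{B(0,R)}|\nabla^A u|^2\ge 2\pi C^2\sum_k\int_0^R\frac{|u_k|^2}{r^2}\,r\,dr=C^2\int_{B(0,R)}\frac{|u|^2}{r^2},
\]
which is \eqref{hardy}. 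For the final assertion, if $\nu\in(0,\tfrac12]$ then $|\nu-0|=\nu$, while $|\nu-k|\ge 1-\nu\ge\tfrac12\ge\nu$ for $k\ge1$ and $|\nu-k|\ge1+\nu>\nu$ for $k\le-1$, so $C=\nu$.

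The hard part will not be the computation itself but its rigorous justification for a generic $u\in H^1(B(0,R),\mathbb C)$: one must check that the coefficients $u_k$ are weakly differentiable in $r$ with $u_k'$ equal to the $k$-th angular Fourier coefficient of $\partial_r u$, and that every series/integral interchange is valid. Since $u\in H^1$ forces $\partial_r u,\partial_t u\in L^2$, differentiation under the integral sign identifies $u_k'$ as claimed, and because all integrands appearing are non-negative, Tonelli's theorem licenses the interchanges without any a priori finiteness of the magnetic energy; both sides of \eqref{hardy} are then simply compared as elements of $[0,+\infty]$, so the possible singularity of $|u|^2/r^2$ at the origin is absorbed into the term-by-term identity and causes no difficulty.
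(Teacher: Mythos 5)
Your proof is correct and is essentially the paper's own argument, namely the Laptev--Weidl separation of variables: the paper phrases the angular bound as the first eigenvalue of $-\partial^2_{\theta\theta}+2i\nu\partial_\theta+\nu^2$ on $\mathbb{S}^1$, whose eigenfunctions are exactly the Fourier modes $e^{ik\theta}$ with eigenvalues $(k-\nu)^2$, so your mode-by-mode Parseval computation is the same estimate made explicit. Your attention to the Tonelli interchange and to reading both sides of \eqref{hardy} in $[0,+\infty]$ for a general $u\in H^1$ is a welcome tightening of details the paper's sketch leaves implicit.
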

\begin{proof}
Inequality \eqref{hardy} is proved in \cite{laptev_weidl}. We recall briefly here the proof. We use polar coordinates $(r,\theta)$ in $\mathbb R^2$ centered at $x_0$, and express $\int_{B(x_0,R)}|\nabla^Au|^2dx$ in this new coordinate system. We have
$$
\int_{B(x_0,R)}|\nabla^A u|^2dx=\int_0^R\int_0^{2\pi}\left(|\partial_r u|^2+\frac{1}{r^2}|\partial_{\theta}u-i\nu  u|^2\right)rd\theta dr.
$$

We focus on $\int_0^{2\pi}|\partial_{\theta}u-i\nu  u|^2d\theta$. We estimate 
$$
\inf_{0\ne u\in H^1(\mathbb S^1,\mathbb C)}\frac{\int_0^{2\pi}|\partial_{\theta}u-i\nu u|^2d\theta}{\int_0^{2\pi}r^2|u|^2d\theta}.
$$
This infimum corresponds to the first eigenvalue, which we denote by $\mu_1$, of
$$
-\partial^2_{\theta\theta}u+2i\nu \partial_{\theta}u+\nu^2 u=\mu u
$$
on $\mathbb S^1$. A set of $L^2(\mathbb S^1,\mathbb C)$-normalized eigenfunctions is given by $(2\pi)^{-1/2}e^{i\theta k}$, $k\in\mathbb Z$, with corresponding eigenvalue $\mu=(k-\nu)^2$. If $\nu\notin\mathbb Z$, then $\mu_1=\inf\{(k-\nu)^2:k\in\mathbb Z\}$.

This concludes the proof.
\end{proof}

We recall the following well-known result
\begin{lemme}\label{bdry_norm}
Let $\Omega$ be a bounded domain in $\mathbb R^n$ with Lipschitz boundary. Then the norm $\|u\|_{H^1(\Omega,\mathbb C)}^2$ is equivalent to the following norm:
\begin{equation}\label{normS}
\|u\|_{\partial}^2:=\int_{\Omega}|\nabla u|^2+\int_{\partial\Omega}|u|^2,
\end{equation}
for all $u\in H^1(\Omega,\mathbb C)$. With abuse of notation, we write $\int_{\partial\Omega}|u|^2$ in place of $\int_{\partial\Omega}|\gamma_0(u)|^2$. 
\end{lemme}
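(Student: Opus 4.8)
The plan is to prove the two inequalities separately, the nontrivial content being a Poincaré-type inequality with a boundary term. One direction is immediate from the continuity of the trace operator $\gamma_0:H^1(\Omega,\mathbb C)\to L^2(\bd\Omega,\mathbb C)$, which holds on a Lipschitz domain: there is a constant $C$ with $\int_{\bd\Omega}\abs{u}^2\leq C\norm{u}_{H^1(\Omega,\mathbb C)}^2$, and adding $\int_\Omega\abs{\nabla u}^2\leq\norm{u}_{H^1(\Omega,\mathbb C)}^2$ gives $\norm{u}_{\bd}^2\leq(1+C)\norm{u}_{H^1(\Omega,\mathbb C)}^2$.

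For the reverse inequality it suffices to control the $L^2$ mass of $u$, that is, to establish
$$
\int_\Omega\abs{u}^2\leq C_0\Big(\int_\Omega\abs{\nabla u}^2+\int_{\bd\Omega}\abs{u}^2\Big)
$$
for some constant $C_0$ independent of $u$; adding $\int_\Omega\abs{\nabla u}^2$ to both sides then yields $\norm{u}_{H^1(\Omega,\mathbb C)}^2\leq(1+C_0)\norm{u}_{\bd}^2$. I would prove this inequality by contradiction. If it failed, there would be a sequence $(u_n)$ with $\int_\Omega\abs{u_n}^2=1$ and $\int_\Omega\abs{\nabla u_n}^2+\int_{\bd\Omega}\abs{u_n}^2\to 0$.

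Such a sequence is bounded in $H^1(\Omega,\mathbb C)$, so by the Rellich--Kondrachov theorem for bounded Lipschitz domains a subsequence converges strongly in $L^2(\Omega,\mathbb C)$ and weakly in $H^1(\Omega,\mathbb C)$ to a limit $u$. Strong $L^2$-convergence preserves the normalization, so $\int_\Omega\abs{u}^2=1$ and in particular $u\neq 0$. Weak lower semicontinuity of the Dirichlet energy together with $\int_\Omega\abs{\nabla u_n}^2\to 0$ forces $\nabla u=0$, so $u$ is constant on each connected component of $\Omega$. Finally, continuity of $\gamma_0$ gives $\gamma_0 u_n\rightharpoonup\gamma_0 u$ weakly in $L^2(\bd\Omega,\mathbb C)$, and weak lower semicontinuity with $\int_{\bd\Omega}\abs{u_n}^2\to 0$ yields $\int_{\bd\Omega}\abs{u}^2=0$, i.e. $\gamma_0 u=0$.

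The decisive step is the last one, where the geometry enters: $u$ is a component-wise constant whose boundary trace vanishes. Since $\Omega$ is bounded, every connected component has nonempty boundary, and on a Lipschitz domain the trace of a constant $c$ equals $c$; hence the constant is $0$ on each component, so $u\equiv 0$, contradicting $\int_\Omega\abs{u}^2=1$. This contradiction proves the inequality above and hence the equivalence of the two norms. The only point requiring a little care is applying the trace-vanishing argument on each connected component separately; the remaining ingredients (compactness of the embedding, continuity of the trace, and weak lower semicontinuity) are standard, and the argument applies verbatim to complex-valued functions.
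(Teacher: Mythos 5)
Your proof is correct, but note that the paper itself offers no argument to compare against: Lemma \ref{bdry_norm} is introduced there with ``We recall the following well-known result'' and is simply cited, with the boundary-term norm \eqref{normS} then used to make the Steklov form coercive. Your compactness argument is the standard proof of this folklore fact and is sound in every step: trace continuity on Lipschitz domains gives the easy direction; for the reverse, Rellich--Kondrachov (valid for bounded Lipschitz domains, which are $H^1$-extension domains) plus weak lower semicontinuity of both the Dirichlet energy and the boundary $L^2$ norm (the latter since $\gamma_0$, being a bounded linear map, sends weakly convergent sequences to weakly convergent sequences) forces the limit to be locally constant with vanishing trace, hence zero. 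The point you flag about connected components is handled correctly --- for a bounded Lipschitz set there are finitely many components, each with boundary of positive surface measure, so each constant vanishes --- though if ``domain'' is read as connected, as is standard, this is not even needed. Two minor observations: your argument is non-constructive (it yields no explicit constant $C_0$), whereas a direct proof via the divergence identity $\int_{\Omega}\abs{u}^2\,\mathrm{div}\,X=-2\,\mathrm{Re}\int_{\Omega}\bar u\,\scal{\nabla u}{X}+\int_{\bd\Omega}\abs{u}^2\scal{X}{N}$ with a suitable Lipschitz vector field $X$ gives quantitative constants; and the passage to complex-valued functions is automatic since all ingredients apply to real and imaginary parts separately. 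Neither point affects validity.
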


A consequence of Lemmas \ref{eq1} and \ref{bdry_norm} is the following

\begin{lemme}\label{eq2}
For $\nu\notin\mathbb Z$, the norm $\|u\|_A$ is equivalent to the following norms:
\begin{equation}\label{Abdry1}
\|u\|^2_{A,\partial}:=\int_{\Omega}|\nabla^Au|^2+\int_{\partial\Omega}|u|^2,
\end{equation}
and
\begin{equation}\label{Abdry2}
\|u\|^2_{A',\partial}:=\int_{\Omega}|\nabla u|^2+\frac{|u|^2}{|x-x_0|^2}+\int_{\partial\Omega}|u|^2.
\end{equation}
\end{lemme}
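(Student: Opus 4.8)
The plan is to prove that both $\|\cdot\|_{A,\partial}$ and $\|\cdot\|_{A',\partial}$ are equivalent to the magnetic norm $\|\cdot\|_A$; since Proposition~\ref{eq1} already gives $\|\cdot\|_A\sim\|\cdot\|_{A'}$, it is enough to compare each of the two boundary norms with $\|\cdot\|_A$ or $\|\cdot\|_{A'}$. Throughout I would use the elementary principle that equivalence is stable under adding a common non-negative term: if two non-negative quantities satisfy $c_1 Q\le P\le c_2 Q$ and $S\ge 0$ is arbitrary, then $\min(c_1,1)(Q+S)\le P+S\le\max(c_2,1)(Q+S)$, so $P+S\sim Q+S$.

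First I would dispose of $\|\cdot\|_{A',\partial}$, which is the easy one. Writing $\|u\|_{A'}^2=\|u\|_{H^1(\Omega,\mathbb C)}^2+\int_\Omega |u|^2/|x-x_0|^2$ and $\|u\|_{A',\partial}^2=\|u\|_\partial^2+\int_\Omega |u|^2/|x-x_0|^2$, Lemma~\ref{bdry_norm} gives $\|u\|_{H^1(\Omega,\mathbb C)}\sim\|u\|_\partial$, and adding the common non-negative term $\int_\Omega|u|^2/|x-x_0|^2$ yields $\|u\|_{A'}\sim\|u\|_{A',\partial}$. Combined with Proposition~\ref{eq1} this already gives $\|u\|_A\sim\|u\|_{A',\partial}$.

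It remains to treat $\|\cdot\|_{A,\partial}$, and I would show $\|u\|_{A,\partial}\sim\|u\|_A$ directly. For the bound $\|u\|_{A,\partial}\lesssim\|u\|_A$ note that $\int_\Omega|\nabla^A u|^2\le\|u\|_A^2$ trivially, while the trace estimate contained in Lemma~\ref{bdry_norm} gives $\int_{\partial\Omega}|u|^2\le\|u\|_\partial^2\lesssim\|u\|_{H^1(\Omega,\mathbb C)}^2\le\|u\|_{A'}^2\lesssim\|u\|_A^2$, the last step by Proposition~\ref{eq1} (this is where $\nu\notin\mathbb Z$, through the Hardy inequality of Lemma~\ref{h}, is used). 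The reverse bound $\|u\|_A\lesssim\|u\|_{A,\partial}$ is the heart of the matter: since $\int_\Omega|\nabla^A u|^2\le\|u\|_{A,\partial}^2$, it suffices to prove the magnetic Poincar\'e--trace estimate
\[
\int_\Omega |u|^2\;\lesssim\;\int_\Omega|\nabla^A u|^2+\int_{\partial\Omega}|u|^2 .
\]
Here I would invoke the diamagnetic inequality $|\nabla|u||\le|\nabla^A u|$, valid a.e.\ for $u\in H^1_A(\Omega,\mathbb C)$, which lets me pass to the real function $|u|\in H^1(\Omega)$ and apply the scalar equivalence of Lemma~\ref{bdry_norm} to it: $\int_\Omega|u|^2\lesssim\int_\Omega|\nabla|u||^2+\int_{\partial\Omega}|u|^2\le\int_\Omega|\nabla^A u|^2+\int_{\partial\Omega}|u|^2$. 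This closes the estimate, and hence $\|u\|_{A,\partial}\sim\|u\|_A\sim\|u\|_{A',\partial}$.

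The main obstacle is exactly this last inequality. The difficulty is that a direct approach, bounding $\int_\Omega|\nabla u|^2$ by $\int_\Omega|\nabla^A u|^2$ plus the Hardy term $\int_\Omega|u|^2/|x-x_0|^2$ and then trying to reabsorb the latter through Lemma~\ref{h}, is circular: controlling the Hardy term away from the pole again requires $\int_\Omega|u|^2$, with no usable control on the constants. The diamagnetic inequality sidesteps this cleanly by reducing everything to the already-established non-magnetic equivalence of Lemma~\ref{bdry_norm}. (Alternatively one could argue by contradiction and compactness, using the compact embeddings assumed on $\Omega$ together with the weak lower semicontinuity of the magnetic Dirichlet energy and Lemma~\ref{h} to force a limiting covariantly constant function to vanish; but the diamagnetic route is shorter.)
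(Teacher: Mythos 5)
Your proof is correct, and it in fact supplies more than the paper does: in the paper, Lemma \ref{eq2} carries no proof at all, being presented as an immediate ``consequence of'' Proposition \ref{eq1} and Lemma \ref{bdry_norm}. For the norm \eqref{Abdry2} your argument is exactly that immediate combination (the scalar equivalence of Lemma \ref{bdry_norm} plus the addition of the common non-negative Hardy term), so there you and the paper coincide. The genuinely different content is your treatment of \eqref{Abdry1}: you correctly identify that the coercivity direction $\|u\|_A^2\lesssim\|u\|_{A,\partial}^2$ reduces to the magnetic Poincar\'e--trace bound $\int_\Omega|u|^2\lesssim\int_\Omega|\nabla^Au|^2+\int_{\partial\Omega}|u|^2$, and that this does \emph{not} follow by naively chaining Proposition \ref{eq1} with Lemma \ref{bdry_norm} --- the Hardy term away from the pole produces an unabsorbable multiple of $\int_\Omega|u|^2$, exactly the circularity you describe. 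Your diamagnetic fix is sound: since $\nu\notin\mathbb Z$, Proposition \ref{eq1} gives $H^1_A(\Omega,\mathbb C)\subset H^1(\Omega,\mathbb C)$, hence $|u|\in H^1(\Omega)$, and because $A$ is real one has $\mathrm{Re}(\bar u\,\nabla^Au)=\mathrm{Re}(\bar u\,\nabla u)$, so $|\nabla|u||\le|\nabla^Au|$ a.e.; applying the scalar Lemma \ref{bdry_norm} to $|u|$ (whose trace is $|\gamma_0(u)|$) then closes the estimate. What each approach buys: the paper's one-line derivation is economical but, read literally, incomplete for \eqref{Abdry1}; your version adds a single standard input (the diamagnetic inequality, which the paper never invokes) and makes the lemma a genuine corollary with explicit constants traceable to Lemmas \ref{h} and \ref{bdry_norm}. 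Your parenthetical compactness-and-contradiction alternative would also work, using the compact embedding and trace assumed on $\Omega$ together with weak lower semicontinuity of the magnetic energy and a holonomy argument to kill a covariantly constant limit, but it is non-quantitative; the diamagnetic route is preferable. One small polish: state explicitly that the a.e.\ identity for $\nabla|u|$ is justified precisely because $u\in H^1(\Omega,\mathbb C)$, which is where $\nu\notin\mathbb Z$ enters the argument a second time, beyond the trace bound.
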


\begin{rem}
Lemma \ref{eq1} essentially says that, if $\nu\notin\mathbb Z$
$$
H^1_A(\Omega,\mathbb C)=\left\{u\in H^1(\Omega,\mathbb C):\frac{u}{|x-x_0|}\in L^2(\Omega,\mathbb C)\right\}.
$$
In particular, $H^1_A(\Omega,\mathbb C)\subset H^1(\Omega,\mathbb C)$.
\end{rem}

\subsection{The Neumann eigenvalue problem}

Problem \eqref{AB_N} is understood in the weak sense, namely, find a function $u\in H^1_A(\Omega,\mathbb C)$ and number $\lambda\in\mathbb C$ such that
$$
\int_{\Omega}\langle\nabla^A u,\overline{\nabla^A \phi}\rangle=\lambda\int_{\Omega}u\bar\phi \,,\ \ \ \forall\phi\in H^1_A(\Omega,\mathbb C).
$$
We rewrite it as
\begin{equation}\label{weak_N}
\int_{\Omega}\langle\nabla^A u,\overline{\nabla^A \phi}\rangle  +\int_{\Omega}u\bar\phi =(\lambda+1)\int_{\Omega}u\bar\phi \,,\ \ \ \forall\phi\in H^1_A(\Omega,\mathbb C).
\end{equation}

Since the quadratic form on the left-hand side of \eqref{weak_N} is bounded and coercive on $H^1_A(\Omega,\mathbb C)$  and the embedding $H^1_A(\Omega,\mathbb C)\subset L^2(\Omega,\mathbb C)$ is compact, problem \eqref{weak_N} is recast to an eigenvalue problem for a compact self-adjoint operator on the Hilbert space $H^1_A(\Omega,\mathbb C)$. Standard Spectral Theory implies that problem \eqref{weak_N} admits a sequence of eigenvalues
$$
-\infty<\lambda_1\leq\lambda_2\leq\cdots\leq\lambda_k\leq\cdots\nearrow+\infty.
$$

Moreover, there exists a Hilbert basis of $H^1_A(\Omega,\mathbb C)$ of eigenfunctions of \eqref{weak_N}. The eigenfunctions can be chosen to form a orthonormal basis of $L^2(\Omega,\mathbb C)$ as well.

\smallskip

We finally recall the variational characterization of the eigenvalues:

\begin{equation}\label{minmax_N}
\lambda_k=\min_{\substack{U\subset H^1_A(\Omega,\mathbb C)\\{\rm dim}\,U=k}}\max_{0\ne u\in U}\frac{\int_{\Omega}|\nabla^Au|^2}{\int_{\Omega}|u|^2}.
\end{equation}
In particular, $\lambda_1\geq 0$. If $\nu\notin\mathbb Z$ and $x_0\in\Omega$, we deduce from Lemma \ref{h} that $\lambda_1>0$. On the other hand, if $\nu\in\mathbb Z$, then $e^{i\nu\theta}\in H^1_A(\Omega,\mathbb C)$ is an eigenfunction corresponding to the eigenvalue $\lambda=0$. We deduce by gauge invariance (see Appendix \ref{gauge_inv} ) that in this case the eigenvalues of \eqref{AB_N} coincide with those of the Neumann Laplacian (i.e., $\nu=0$). If $x_0$ belongs to the unbounded component of $\Omega^c$, then the flux of $A_{x_0,\nu}$ is zero, hence $A_{x_0,\nu}=\nabla\phi$ with $\Delta\phi=0$ on $\Omega$, and the eigenvalues of \eqref{AB_N} coincide with those of the Neumann Laplacian. If $x_0$ belongs to some bounded component of $\Omega^c$, the flux is $\nu$, and if it is not an integer, inequality \eqref{hardy} with $C>0$ holds with $B(x_0,R)$ replaced by an annular region $A\subset\Omega$ such that $x_0$ belongs to the bounded component of $A^c$ (one immediately realizes that \eqref{hardy} holds if we replace $B(x_0,R)$ by $B(x_0,R)\setminus B(x_0,r)$ for any $0<r<R$). Then $\lambda_1>0$.

\subsection{The Steklov eigenvalue problem}

Problem \eqref{AB} is understood in the weak sense, namely, find a function $u\in H^1_A(\Omega,\mathbb C)$ and number $\sigma\in\mathbb C$ such that
$$
\int_{\Omega}\langle\nabla^A u,\overline{\nabla^A \phi}\rangle=\sigma\int_{\partial\Omega}u\bar\phi\,,\ \ \ \forall\phi\in H^1_A(\Omega,\mathbb C).
$$
We rewrite it as
\begin{equation}\label{weak}
\int_{\Omega}\langle\nabla^A u,\overline{\nabla^A \phi}\rangle  +\int_{\partial\Omega}u\bar\phi =(\sigma+1)\int_{\partial\Omega}u\bar\phi \,,\ \ \ \forall\phi\in H^1_A(\Omega,\mathbb C).
\end{equation}

For the study of problem \eqref{weak} we consider the space $H^1_A(\Omega,\mathbb C)$ endowed with the equivalent norm $\|u\|_{A,\partial}$.

Since the quadratic form on the left-hand side of \eqref{weak} is bounded and coercive on $H^1_A(\Omega,\mathbb C)$ (with the equivalent norm $\|u\|_{A,\partial}$, see Lemma \ref{eq2}) and the trace operator $\gamma_0:H^1_A(\Omega,\mathbb C)\rightarrow L^2(\partial\Omega,\mathbb C)$ is compact, problem \eqref{weak} is recast to an eigenvalue problem for a compact self-adjoint operator on the Hilbert space $H^1_A(\Omega,\mathbb C)$. Standard Spectral Theory implies that problem \eqref{weak} admits a sequence of eigenvalues
$$
-\infty<\sigma_1\leq\sigma_2\leq\cdots\leq\sigma_k\leq\cdots\nearrow+\infty.
$$

Moreover, there exists a Hilbert basis of $H^1_{A,h}(\Omega,\mathbb C)$ of eigenfunctions of \eqref{weak}, where 
$$
H^1_{A,h}(\Omega,\mathbb C)=\left\{u\in H^1_A(\Omega,\mathbb C):\int_{\Omega}\langle\nabla^Au,\overline{\nabla^A\phi}\rangle=0\,,\ \ \ \forall \phi\in H^1_A(\Omega,\mathbb C):\gamma_0(u)=0\right\}.
$$
The eigenfunctions can be chosen to form a orthonormal basis of $L^2(\partial\Omega,\mathbb C)$ as well.

\smallskip

We finally recall the variational characterization of the eigenvalues:

\begin{equation}\label{minmax}
\sigma_k=\min_{\substack{U\subset H^1_A(\Omega,\mathbb C)\\{\rm dim}\,U=k}}\max_{0\ne u\in U}\frac{\int_{\Omega}|\nabla^Au|^2}{\int_{\partial\Omega}|u|^2}.
\end{equation}
In particular, $\sigma_1\geq 0$. As in the Neumann case, when $x_0\in\Omega$, $\sigma_1>0$ if and only if $\nu\notin\mathbb Z$. If $\nu\in\mathbb Z$ or $x_0$ belongs to the unbounded connected component of $\Omega^c$, the eigenvalues coincide with those of the usual Steklov problem (i.e., $\nu=0$). If $\nu\notin\mathbb Z$ and $x_0$ belongs to some bounded connected component of $\Omega^c$, then $\sigma_1>0$.

\smallskip

We finally remark that the discussion contained in this section applies with no essential modifications to the case of bounded domains in two-dimensional Riemannian manifolds  $M^2$, and also to the case in which multiple singularities (of Aharonov-Bohm type) occur, i.e., the domain is punctured in many points.

\subsection{Gauge invariance}\label{gauge_inv}

It is well-known that, if $A_{x_0,\nu}$ and $A_{\nu'}$ have fluxes $\nu,\nu'$ which differ by an integer, i.e., $\nu-\nu'\in\mathbb Z$, then $\Delta_{A_{x_0,\nu}}$ and $\Delta_{A_{\nu'}}$ are unitarily equivalent. In fact, one can define a multivalued function
$$
\psi(x)=\int_{y_0}^x(A_{x_0,\nu}-A_{\nu'})
$$
where $y_0$ is any reference point (but not the pole). By the given condition, $\psi$ is multi valued but $e^{i\psi}$ is well-defined. Gauge invariance identity is:
$$
\Delta_{A_{\nu'}}e^{-i\psi}=\Delta_{A_{x_0,\nu}-\nabla\psi}e^{i\psi}=e^{-i\psi}\Delta_{A_{x_0,\nu}}.
$$
One also observes that, if
$$
\nabla^{A_{x_0,\nu}}u=\nabla u-iuA_{x_0,\nu},
$$
then one has the identity:
$$
\nabla^{A_{x_0,\nu}-\nabla\psi}(e^{-i\psi}u)=e^{-i\psi}\nabla^{A_{x_0,\nu}}u.
$$

From this one gets easily that, if $u$ is an eigenfunction of the magnetic Neumann problem \eqref{AB_N} associated to $\lambda$, then $e^{-i\psi}u$ is an eigenfunction of the corresponding problem with potential $A_{x_0,\nu}-\nabla\psi$, associated to the same eigenvalue. The same assertion holds for the magnetic Steklov problem \eqref{AB}.

\smallskip

Since we are interested in the case $\nu\notin\mathbb Z$, we can always assume, without loss of generality, that 
\begin{equation}\label{gauge_ass}
\nu\in \left(0,\frac 12\right].
\end{equation}

\section{Aharonov-Bohm eigenvalues in two-dimensional manifolds of revolution}\label{sec:AB_revolution}

Let $(M^2,g)$ be a $2$-dimensional compact manifold of revolution with pole $x_0$ and diameter $D$. We assume that the metric, in polar coordinates $(r,t)$ around the pole, is given by:
$$
g=\twomatrix 100{\theta^2 (r)},
$$
with $\theta(r)$ smooth and positive. Here $r$ is the distance to $x_0$ (i.e., the radial coordinate). We shall always assume that
$$
\theta>0 {\rm\ on\ }(0,D)\,,\ \ \  \theta'(0)=1.
$$
We also define
 $$
 \bar R=\min\{R:\theta'(R)=0\}\in(0,D).
 $$
If such minimum does not exists, then we set $\bar R=D$.

\medskip

If $f=f(r,t)$ the Laplacian writes:
$$
\Delta f=-f''-\dfrac{\theta'}{\theta}f'-\frac{1}{\theta^2}\deriven 2f t,
$$
where by $f'$ we denote the derivative of $f$ with respect to the radial variable $r$.
\smallskip

We consider the harmonic $1$-form $A_{x_0,\nu}$ which is written in polar coordinates as
$$
A=\nu dt
$$
where $\nu$ is the flux around $x_0$.

\begin{lemme} For any $f=f(r,t)$:
$$
\Delta_{A_{x_0,\nu}}f=-f''-\dfrac{\theta'}{\theta}f'-\dfrac{1}{\theta^2}\deriven 2f{t}+\dfrac{\nu^2}{\theta^2}f+2i\dfrac{\nu}{\theta^2}\derive f{t}.
$$
If $f(r,t)=u(r)e^{ikt}$ then 
$$
\derive f{t} =iku e^{ikt}, \quad \deriven 2 f{t}=-k^2 u e^{ikt}
$$
and therefore:
$$
\Delta_{A_{x_0,\nu}}f=\left(-u''-\dfrac{\theta'}{\theta}u'+\dfrac{(k-\nu)^2}{\theta^2}u\right)e^{ikt}.
$$

\end{lemme}
\begin{proof}

The magnetic Laplacian writes, for co-closed potentials $A_{x_0,\nu}$:
$$
\Delta_{A_{x_0,\nu}}f=\Delta f+\abs{A_{x_0,\nu}}^2f+2i\scal{df}{A_{x_0,\nu}}.
$$

Normalizing the basis $\left(\derive{}{r}, \derive {}{t}\right)$ we obtain the orthonormal basis:
$$
(e_1,e_2)=\left(\derive{}{r}, \frac {1}{\theta}\derive {}{t}\right)
$$
hence we get:
$$
\abs{A_{x_0,\nu}}^2=\dfrac{\nu^2}{\theta^2}
$$
and 
$$
2i\scal{df}{A_{x_0,\nu}}=2i\dfrac{\nu}{\theta^2}\derive f{t}.
$$
The claim immediately follows.
\end{proof}

\begin{cor}
If $f(r,t)=u(r)e^{ikt}$, then  $\Delta_{A_{x_0,\nu}}f=\lambda f$ reads:
\begin{equation}\label{radial_part}
u''+\frac {\theta'}{\theta} u'+\left(\lambda-\frac{(k-\nu)^2}{\theta^2}\right)u=0.
\end{equation}
\end{cor}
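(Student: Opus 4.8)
The plan is to read the result directly off the preceding Lemma and impose the eigenvalue condition. That Lemma already establishes, for an eigenfunction of separated form $f(r,t)=u(r)e^{ikt}$, the identity
$$
\Delta_{A_{x_0,\nu}}f=\left(-u''-\dfrac{\theta'}{\theta}u'+\dfrac{(k-\nu)^2}{\theta^2}u\right)e^{ikt},
$$
so the only remaining task is to compare this with $\lambda f$. First I would write $\lambda f=\lambda\, u(r)\,e^{ikt}$ and equate it with the expression above, obtaining
$$
\left(-u''-\dfrac{\theta'}{\theta}u'+\dfrac{(k-\nu)^2}{\theta^2}u\right)e^{ikt}=\lambda\, u\, e^{ikt}.
$$

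Since $e^{ikt}$ never vanishes, I would cancel this common angular factor from both sides, which reduces the equation to the purely radial identity $-u''-\frac{\theta'}{\theta}u'+\frac{(k-\nu)^2}{\theta^2}u=\lambda u$; transposing all terms to the left and flipping the overall sign gives precisely \eqref{radial_part}. There is no genuine obstacle here: the full computational content lies in the Lemma, and the Corollary is merely its specialization to separated eigenfunctions $u(r)e^{ikt}$, with the cancellation of the nonvanishing factor $e^{ikt}$ being the only (and entirely routine) step.
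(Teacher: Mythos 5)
Your proof is correct and matches the paper's (implicit) argument exactly: the Corollary is an immediate consequence of the preceding Lemma's formula for $\Delta_{A_{x_0,\nu}}\bigl(u(r)e^{ikt}\bigr)$, obtained by equating it with $\lambda u e^{ikt}$, cancelling the nonvanishing factor $e^{ikt}$, and rearranging signs. Nothing further is needed.
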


\subsection{Eigenfunctions of the Neumann problem}\label{sub:rev_N}
We consider now problem \eqref{AB_N} on disks $B(x_0,R)$ of radius $R$ centered at the pole of a manifold of revolution $M^2$.

\smallskip

Looking for eigenfunctions of the form $f(r,t)=u(r)e^{ikt}$, $k\in\mathbb Z$, we find, thanks to \eqref{radial_part}, that $u$ is a bounded solution of the following singular Sturm-Liouville problem.
\begin{equation}\label{SL_N}
\begin{cases}
u''+\frac {\theta'}{\theta} u'+\left(\lambda-\frac{(k-\nu)^2}{\theta^2}\right)u=0 & {\rm in\ }(0,R),\\
u'(R)=0.
\end{cases}
\end{equation}
If $\nu\notin\mathbb Z$, we see from \eqref{SL_N} that necessarily $u(0)=0$  (and in fact, any function in $H^1_A(\Omega,\mathbb C)$ vanishes at $x_0$). If $\nu\in\mathbb Z$, we see that any bounded solution of \eqref{SL_N} with $\nu=k$ satisfies $u'(0)=0$, while for $\nu\ne k$, $u(0)=0$. This is exactly the case of the standard Laplacian. The condition $u'(R)=0$ is the Neumann condition; in fact, $\langle \nabla^{A_{x_0,\nu}}u,N\rangle=\langle\nabla u,N\rangle=u'(R)e^{ikt}$ in the case of a disk centered at $x_0$.

\smallskip

The associated quadratic form is 
$$
\int_0^R \left(u'^2+ V_ku^2\right)\theta\,dr
$$
where $V_k(r)=\frac{(k-\nu)^2}{\theta^2(r)}$. Note that, if $\nu\in (0,\frac 12]$, then $V_k$ is increasing in $\abs{k}$,  hence:
$$
V_k\geq V_0,
$$
for all $k$.

\begin{lemma}\label{lemma_N} Let $\nu\in (0,\frac 12]$. Then:
\begin{enumerate}[i)]

\item For each $k\in\mathbb Z$, problem \eqref{SL_N} admits an infinite sequence of eigenvalues:
$$
0<\lambda_{k1}<\lambda_{k2}<\cdots\leq\lambda_{kj}<\cdots\nearrow+\infty
$$
with associated bounded eigenfunctions $u_{kj}$. All eigenvalues are simple and $u_{kj}$ has $j$ zeros in $(0,R)$.

\item If $\abs{k}\leq \abs{h}$ then:
$$
\lambda_{k1}\leq \lambda_{h1}.
$$
In particular, $\min_{k,j}\{\lambda_{kj}\}=\lambda_{01}$.
\end{enumerate}
\end{lemma}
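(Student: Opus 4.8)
The plan is to analyze the singular Sturm-Liouville problem \eqref{SL_N} as a standard spectral problem on the weighted space $L^2((0,R),\theta\,dr)$, and then compare the bottom eigenvalues across different values of $k$ using the monotonicity of the effective potential $V_k$. The operator in question is $L_k u = -\frac{1}{\theta}(\theta u')' + V_k u$ with $V_k(r) = \frac{(k-\nu)^2}{\theta^2(r)}$ and a Neumann condition at $r=R$; near $r=0$ the term $V_k$ behaves like $\frac{(k-\nu)^2}{r^2}$ (since $\theta(r)\sim r$), which is an inverse-square singularity. For $\nu\notin\mathbb Z$ we have $(k-\nu)^2>0$ for every $k$, so the singularity forces the limit-point case at $0$ and the admissible solutions are exactly the bounded ones satisfying $u(0)=0$; this selects a self-adjoint realization with discrete spectrum.

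For part i), I would first establish that $L_k$ is a non-negative, self-adjoint operator with compact resolvent. Non-negativity is immediate from the quadratic form $\int_0^R(u'^2+V_ku^2)\theta\,dr\geq 0$ together with the Hardy inequality (Lemma \ref{h}), which in fact gives strict positivity $\lambda_{k1}>0$ when $\nu\notin\mathbb Z$. Discreteness and divergence of the eigenvalues to $+\infty$ follow from compact embedding of the form domain into $L^2((0,R),\theta\,dr)$, exactly as in the regular Sturm-Liouville theory; the inverse-square singularity is handled by the standard theory of singular endpoints in the limit-point case. The simplicity of each eigenvalue and the fact that $u_{kj}$ has exactly $j$ interior zeros are the classical Sturm oscillation results: simplicity follows because any two solutions with the same square-integrable behavior at the singular endpoint are proportional (the singular endpoint admits a one-dimensional space of admissible solutions), and the nodal count follows from the Sturm comparison theorem applied as $\lambda$ increases.

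For part ii), the key observation is the pointwise monotonicity of the effective potential. Since $\nu\in(0,\tfrac12]$, for $0\le\abs{k}\le\abs{h}$ one has $\abs{k-\nu}\le\abs{h-\nu}$, hence $(k-\nu)^2\le(h-\nu)^2$ and therefore $V_k(r)\le V_h(r)$ for all $r\in(0,R)$. The bottom eigenvalue is characterized variationally by
$$
\lambda_{k1}=\inf_{0\ne u}\frac{\int_0^R(u'^2+V_ku^2)\theta\,dr}{\int_0^R u^2\theta\,dr},
$$
and since the form domains coincide (all admissible functions vanish at $0$ and the weight $\theta$ is the same), the numerator for index $k$ is termwise dominated by that for index $h$ on the common test space. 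Taking the infimum preserves the inequality, giving $\lambda_{k1}\le\lambda_{h1}$. The choice $k=0$ minimizes $(k-\nu)^2$ among all integers $k$ precisely because $\nu\in(0,\tfrac12]$ places $\nu$ closer to $0$ than to any nonzero integer, so $\min_{k,j}\lambda_{kj}=\lambda_{01}$. I expect the main technical obstacle to be the careful justification of the functional-analytic setup at the singular endpoint $r=0$ — verifying the limit-point classification, the correct boundary behavior $u(0)=0$, and the compactness of the resolvent in the presence of the inverse-square potential — whereas the comparison in part ii) is essentially a one-line consequence of monotonicity of $V_k$ once the variational characterization is in place.
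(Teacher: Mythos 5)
Your part ii) is in substance identical to the paper's argument: the paper also works with the variational characterization of $\lambda_{k1}$ weighted with $\theta$ and the monotonicity of $V_k$ in $\abs{k}$, plugging the first eigenfunction $u_{h1}$ into the Rayleigh quotient for $\lambda_{01}$; your ``termwise domination over the common form domain'' is the same computation, and your observation that the form domains coincide (because $(k-\nu)^2>0$ for all $k$ when $\nu\notin\mathbb Z$) is the right justification. For part i) the paper merely invokes standard singular Sturm--Liouville theory, so your attempt to spell out the functional-analytic setup is reasonable --- but it contains a genuine error at the singular endpoint.

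You claim that since $(k-\nu)^2>0$, the inverse-square singularity ``forces the limit-point case at $0$'', so that the self-adjoint realization is canonical and $u(0)=0$ is automatic. This is false precisely for the indices that matter most. After the Liouville substitution $v=\theta^{1/2}u$, the equation near $0$ becomes $-v''+\frac{\mu^2-1/4}{r^2}v\approx\lambda v$ with $\mu=\abs{k-\nu}$, and the endpoint $r=0$ is limit point if and only if $\mu\geq 1$; equivalently, both local solutions $r^{\mu}$ and $r^{-\mu}$ belong to $L^2((0,\epsilon),\theta\,dr)$ exactly when $\mu<1$, since $\int_0 r^{1-2\mu}\,dr<\infty$ in that range. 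For $\nu\in(0,\frac12]$ the limit-circle case therefore occurs for $k=0$ (where $\mu=\nu\leq\frac12$) and $k=1$ (where $\mu=1-\nu<1$) --- in particular for $k=0$, the index producing $\lambda_{01}$, which is the eigenvalue the whole lemma is about. In the limit-circle case the self-adjoint extensions form a one-parameter family and square-integrability at $0$ does not single out a solution, so your simplicity argument (``the singular endpoint admits a one-dimensional space of admissible solutions'') breaks down as stated. The repair is to define $L_k$ as the Friedrichs extension of your quadratic form $\int_0^R(u'^2+V_ku^2)\theta\,dr$: membership in the form domain kills the $r^{-\mu}$ branch, because $\int_0 V_k\,r^{-2\mu}\,\theta\,dr\sim\int_0 r^{-1-2\mu}\,dr=+\infty$ for every $\mu>0$. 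This is exactly the realization dictated by the magnetic space $H^1_A$ in the paper (whose elements vanish at $x_0$, as noted before \eqref{SL_N}), and with it the boundary behavior $u(0)=0$, simplicity, compactness of the resolvent, and the oscillation count all go through as you describe; the rest of your outline, including the use of Lemma \ref{h} for strict positivity, is fine.
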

\begin{proof}
Point $i)$ follows from standard theory for singular Sturm-Liouville problems, since $\theta(r)$ is smooth and positive in $(0,R]$, and $\theta(r)\sim \theta'(0) r$ as $r\rightarrow 0^+$.

\smallskip

To prove $ii)$ it is sufficient to consider $k=0$. The claim is a consequence of the fact that $V_h$ is increasing in $h$. Let us take $u=u_{h1}$, an eigenfunction associated to $\lambda_{h1}$, as test function for $\lambda_{01}$ in its variational characterization (which is the analogue of \eqref{minmax_V} weighted with $\theta$). We get
$$
\lambda_{01}\int_0^Ru^2\theta\leq \int_0^R \left(u'^2+ V_0u^2\right)\theta\,dr
\leq  \int_0^R \left(u'^2+ V_hu^2\right) \theta\,dr
=\lambda_{h1}\int_0^Ru^2\theta dr
$$
\end{proof}

\begin{cor}\label{cor_first}
We have
$$
\lambda_1(B(x_0,R),A_{x_0,\nu})=\lambda_{01}.
$$
A first eigenfunction is given by $u_{01}(r)$, and it is real and radial.
\end{cor}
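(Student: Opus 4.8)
The plan is to show that separation of variables produces the \emph{entire} spectrum of the magnetic Neumann problem on the centered disk, after which the identity $\lambda_1(B(x_0,R),A_{x_0,\nu})=\lambda_{01}$ follows at once from the ordering established in Lemma \ref{lemma_N}.

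First I would record that, since $\Delta_{A_{x_0,\nu}}$ commutes with rotations about the pole, it leaves invariant each angular Fourier sector. Writing $L^2(B(x_0,R),\mathbb C)\cong L^2((0,R),\theta\,dr)\otimes L^2(\mathbb S^1)$ and decomposing in the orthonormal angular basis $\{(2\pi)^{-1/2}e^{ikt}\}_{k\in\mathbb Z}$, the computation preceding \eqref{radial_part} shows that on the sector spanned by $e^{ikt}$ the operator acts as the radial Sturm-Liouville operator of \eqref{SL_N}, while the magnetic Neumann condition $\langle\nabla^{A_{x_0,\nu}}u,N\rangle=0$ reduces to $u'(R)=0$ on a centered disk. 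Thus every pair $(k,j)$ yields a genuine eigenfunction $u_{kj}(r)e^{ikt}$ of \eqref{AB_N} with eigenvalue $\lambda_{kj}$.

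Next I would argue completeness. For each fixed $k$, the singular Sturm-Liouville theory already invoked in Lemma \ref{lemma_N} gives that $\{u_{kj}\}_{j\geq 1}$ is a complete orthogonal system of the weighted space $L^2((0,R),\theta\,dr)$; tensoring with the complete angular system $\{e^{ikt}\}_k$ shows that $\{u_{kj}(r)e^{ikt}\}_{k,j}$ is complete in $L^2(B(x_0,R),\mathbb C)$. Since Appendix \ref{functional} guarantees that \eqref{AB_N} has purely discrete spectrum with an $L^2$-orthonormal basis of eigenfunctions, a complete orthogonal family of eigenfunctions must exhaust the spectrum; hence the spectrum of \eqref{AB_N} on $B(x_0,R)$ is exactly $\{\lambda_{kj}:k\in\mathbb Z,\ j\geq 1\}$.

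Finally I would invoke Lemma \ref{lemma_N}, which gives $\min_{k,j}\lambda_{kj}=\lambda_{01}$, so that $\lambda_1(B(x_0,R),A_{x_0,\nu})=\lambda_{01}$. The associated eigenfunction is $u_{01}(r)e^{i0t}=u_{01}(r)$, which has no angular dependence (radial) and can be taken real, since for $k=0$ the coefficients of \eqref{SL_N} are real. I expect the only genuinely delicate point to be the completeness argument near the singular endpoint $r=0$: one must check that the boundary behavior forced by $\nu\notin\mathbb Z$, namely $u(0)=0$ (equivalently membership in $H^1_{A_{x_0,\nu}}$), is compatible with the limit-point/limit-circle classification so that no eigenfunctions are lost and the separated family is indeed a complete basis of the correct Hilbert space.
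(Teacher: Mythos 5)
Your argument is correct in outline and reaches the conclusion the same way at the final step (the ordering $\min_{k,j}\lambda_{kj}=\lambda_{01}$ from Lemma \ref{lemma_N}), but your completeness step takes a genuinely different route from the paper's. You argue structurally: per-sector Sturm--Liouville completeness of $\{u_{kj}\}_{j}$ in $L^2((0,R),\theta\,dr)$, tensored with the angular basis, exhausts $L^2(B(x_0,R))$. The paper instead proves completeness by contradiction (Theorem \ref{complete}): a hypothetical eigenfunction $f$ of \eqref{AB_N} orthogonal to all $\psi_{kj}$ is expanded as $f(r,t)=\sum_k a_k(r)e^{ikt}$, and Lemma \ref{lemma_ak} shows each nonzero coefficient $a_k$ is a bounded solution of \eqref{SL_N}, hence a multiple of some $u_{kj_k}$ by simplicity of the radial eigenvalues --- contradicting orthogonality. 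The practical difference concerns exactly the delicate point you flag at the end. Since $(k-\nu)^2\le\tfrac14<\tfrac34$ for $k=0$ and $\nu\in(0,\tfrac12]$, the endpoint $r=0$ is genuinely limit-circle in your sector operators, so your per-sector completeness claim requires identifying the self-adjoint extension selected by the boundary behavior $u(0)=0$ with the one generated by the form on $H^1_{A}$; you correctly name this as the missing check but do not carry it out. It can be closed using the paper's Hardy inequality (Lemma \ref{h}): the singular solution behaves like $r^{-|k-\nu|}$ near $0$, which fails the weighted integrability $\int_0 r^{-2|k-\nu|-1}\,dr<\infty$, so the form domain excludes it and the Friedrichs-type extension is the relevant one; one should also note that the Fourier projections preserve $H^1_A$, so the form operator really is the direct sum of the sector operators. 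The paper's contradiction argument buys immunity from all of this: the condition $a_k(0)=0$ is extracted directly from $f\in H^1_{A}$ (any such function vanishes at the pole when $\nu\notin\mathbb Z$), and only uniqueness of bounded solutions and simplicity of the $\lambda_{kj}$ are used, with no appeal to abstract singular Sturm--Liouville basis theorems. Your route is more standard and generalizes easily, but as written it defers the one nontrivial step; the paper's route is longer but self-contained at precisely that point.
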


Note that $\lambda_1(B(x_0,R),A_{x_0,\nu})$ is not necessarily simple (this is the case of $\nu=\frac{1}{2}$ and the unit disk, see Appendix \ref{disk}). At any rate, Corollary \ref{cor_first} states that we can always find a first eigenfunction which is real and radial. When $\nu\in\mathbb Z$, Lemma \ref{lemma_N} gives exactly the eigenvalues of the Neumann Laplacian, and in particular $\lambda_{01}=0$.

\smallskip

We have the following result on completeness of the family of eigenfunctions $\{u_{kj}(r)e^{ikt}\}$.

\begin{theorem}\label{complete} Let 
$$
\psi_{kj}(r,t)=u_{kj}(r)e^{ikt}
$$
for all $k\in\mathbb Z$, $j=1,2,...$, where $u_{kj}$ are the eigenfunctions of \eqref{SL_N}. Then $\{\psi_{kj}(r,t)\}_{k,j}$ is a complete system in $L^2(B(x_0,R))$.

\end{theorem}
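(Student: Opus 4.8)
The plan is to separate the angular and radial variables and reduce the completeness of the two-parameter family $\{\psi_{kj}\}$ to two one-dimensional statements: the completeness of the characters $\{e^{ikt}\}_{k\in\mathbb Z}$ in $L^2(\mathbb S^1)$, and, for each fixed $k$, the completeness of the radial eigenfunctions $\{u_{kj}\}_j$ in the weighted space $L^2((0,R),\theta\,dr)$. Since the volume element on $B(x_0,R)$ in polar coordinates is $\theta(r)\,dr\,dt$, Fubini together with Parseval's identity for Fourier series gives, for every $F\in L^2(B(x_0,R))$,
$$
\int_{B(x_0,R)}|F|^2 = 2\pi\sum_{k\in\mathbb Z}\int_0^R |F_k(r)|^2\,\theta(r)\,dr,
\qquad F_k(r):=\frac{1}{2\pi}\int_0^{2\pi}F(r,t)e^{-ikt}\,dt,
$$
so that $F$ lies in $L^2(B(x_0,R))$ if and only if each $F_k$ lies in $L^2((0,R),\theta\,dr)$ with the sum of squared norms finite.

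First I would record orthogonality. Because $\int_0^{2\pi}e^{i(k-k')t}\,dt=2\pi\delta_{kk'}$, the functions $\psi_{kj}$ with distinct $k$ are automatically orthogonal, while for $k=k'$ orthogonality reduces to that of $u_{kj}$ and $u_{kj'}$ in $L^2(\theta\,dr)$, which holds since these are eigenfunctions of a self-adjoint Sturm--Liouville operator with distinct eigenvalues (Lemma \ref{lemma_N}). Completeness then follows by the standard duality argument: if $F\in L^2(B(x_0,R))$ satisfies $\langle F,\psi_{kj}\rangle=0$ for all $k,j$, then
$$
0=\langle F,\psi_{kj}\rangle = 2\pi\int_0^R F_k(r)\overline{u_{kj}(r)}\,\theta(r)\,dr,
$$
so each $F_k$ is orthogonal to every $u_{kj}$ in $L^2((0,R),\theta\,dr)$. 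Granting the radial completeness, this forces $F_k\equiv 0$ for all $k$, whence $F=0$ by the Parseval identity displayed above.

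The crux, and the step demanding the most care, is therefore the completeness of $\{u_{kj}\}_j$ in $L^2((0,R),\theta\,dr)$ for each fixed $k$. The endpoint $r=R$ is regular and carries the Neumann condition $u'(R)=0$, but $r=0$ is a singular endpoint: since $\theta(r)\sim r$ near $0$, the radial operator $u\mapsto -\frac1\theta(\theta u')'+\frac{(k-\nu)^2}{\theta^2}u$ has a weight vanishing and a potential blowing up like $(k-\nu)^2/r^2$. The indicial exponents at $0$ are $\pm|k-\nu|$, and one checks $r^{-|k-\nu|}\notin L^2(\theta\,dr)$ near $0$ precisely when $|k-\nu|\ge 1$; thus one is in the limit-point case for $|k-\nu|\ge 1$ (no boundary condition needed) and in the limit-circle case for $|k-\nu|<1$, where the correct self-adjoint realization is the one whose form domain is the closure of functions vanishing near the pole, i.e. the radial part of $H^1_A(B(x_0,R),\mathbb C)$. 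In both cases this is the Friedrichs extension, and I would finish by showing its resolvent is compact: the form domain embeds compactly into $L^2((0,R),\theta\,dr)$, the coercivity near the pole being supplied by the Hardy inequality of Lemma \ref{h} and the compactness statements of Appendix \ref{functional}. Self-adjointness together with compact resolvent yields, via the spectral theorem, a complete orthonormal basis of eigenfunctions of the radial problem, which is exactly the family $\{u_{kj}\}_j$ needed to close the argument.
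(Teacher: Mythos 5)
Your proposal is correct, but it takes a genuinely different route from the paper's. The paper never performs the singular endpoint analysis at $r=0$: it instead uses the completeness in $L^2(B(x_0,R))$ of the eigenfunctions of the two-dimensional problem \eqref{AB_N}, already established in Appendix \ref{functional} via the compact embedding $H^1_A(\Omega,\mathbb C)\subset L^2(\Omega,\mathbb C)$, and then proves (Lemma \ref{lemma_ak}) that if $f=\sum_{k}a_k(r)e^{ikt}$ is any eigenfunction of \eqref{AB_N} with eigenvalue $\lambda$, then each Fourier coefficient $a_k$ is a bounded solution of the radial problem \eqref{SL_N} with that same $\lambda$ (the boundary behavior $a_k(0)=0$, $a_k'(R)=0$ being extracted from $f(x_0)=0$ and the Neumann condition); by simplicity of the radial eigenvalues (Lemma \ref{lemma_N}), $a_k=c_k u_{k,j_k}$, so every eigenfunction of \eqref{AB_N} lies in the closed span of the $\psi_{kj}$, and a short orthogonality argument concludes. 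You instead decompose $L^2(B(x_0,R))\cong\bigoplus_{k\in\mathbb Z}L^2((0,R),\theta\,dr)$ by Parseval and prove the radial completeness for each fixed $k$ from scratch, via the Weyl limit-point/limit-circle classification and the compact resolvent of the Friedrichs realization; your classification is accurate (with $\nu\in(0,\tfrac12]$ one is in the limit-circle case exactly for $k\in\{0,1\}$, where the Friedrichs condition selects the principal solution $\sim r^{|k-\nu|}$, consistent with the requirement $u(0)=0$ in \eqref{SL_N}), and the Hardy term $\frac{(k-\nu)^2}{\theta^2}$ does give the uniform smallness of $\int_0^\epsilon |u|^2\theta\,dr$ needed for compactness of the form-domain embedding. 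As for what each approach buys: the paper's argument stays entirely inside the 2D functional framework it has already built and needs only simplicity of the radial eigenvalues, at the price of the term-by-term manipulations of the Fourier series in Lemma \ref{lemma_ak}; your argument is more self-contained at the ODE level, makes explicit which self-adjoint realization at the pole underlies the boundary behavior $u(0)=0$ (a point the paper delegates to ``standard theory for singular Sturm--Liouville problems'' in Lemma \ref{lemma_N}), and does not presuppose completeness of the 2D eigenfunctions — though it obliges you to run the compact-resolvent argument in every angular sector, which the paper circumvents in one stroke.
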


\begin{proof} If $\nu\in\mathbb Z$ the result is well-known: in fact the eigenfunctions $\{\psi_{kj}\}$ are the eigenfunctions of the Neumann Laplacian. Assume then $\nu\notin\mathbb Z$. Let us assume that the system of eigenfunctions $\{\psi_{kj}(r,t)\}_{k,j}$ is not complete. Then, there exists an eigenfunction $f$ of \eqref{AB_N} which is orthogonal to the span of all $\psi_{kj}$, that is:
\begin{equation}\label{ortho}
\int_{B(x_0,R)} f\overline\psi_{hi}=0
\end{equation}
for all $h\in{\mathbb Z}, i=1,2\dots$. Expand $f(r,t)$ in a Fourier series, for every fixed $r$, and get:
$$
f(r,t)=\sum_{k\in{\mathbb Z}}a_k(r)e^{ikt}
$$
As by assumption $f\ne 0$, there exists $h\in {\mathbb Z}$ such that $a_h\ne 0$.

We prove in Lemma \ref{lemma_ak} here below that, for all $k\in\mathbb Z$,
\begin{equation}\label{claim}
a_k(r)=c_k u_{k{j_k}}(r)
\end{equation}
for some $j_k=1,2,...$ and $c_k\in\mathbb C$, $c_k\ne 0$.
The conclusion follows from \eqref{claim}. In fact, we can write
$$
f(r,t)= \sum_{k\in{\mathbb Z}}c_ku_{k,j_k}(r)e^{ikt}=\sum_{k\in{\mathbb Z}}c_k\psi_{k,j_k}(r,t)
$$
and then, by the orthogonality of $\psi_{kj}$'s:
\begin{equation}
\int_{B(x_0,R)}f\overline\psi_{h{j_h}}=c_h\ne 0
\end{equation}
which contradicts \eqref{ortho}. 
\end{proof}
\begin{lemma}\label{lemma_ak}
Let $\{u_{kj}\}$ denote the eigenfunctions of \eqref{SL_N} and assume that $f(r,t)=\sum_{k\in\mathbb Z}a_k(r)e^{ikt}$ is a nonzero eigenfunction of \eqref{AB_N}. Then, for all $k\in\mathbb Z$
$$
a_k(r)=c_k u_{k{j_k}}(r)
$$
for some $j_k=1,2,...$.
\end{lemma}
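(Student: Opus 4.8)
The plan is to fix $k\in\mathbb Z$ and show that the Fourier coefficient $a_k(r)$ of the eigenfunction $f$ is, up to a constant, one of the radial eigenfunctions $u_{kj}$ of the Sturm--Liouville problem \eqref{SL_N}. First I would use that $f$ is an eigenfunction of \eqref{AB_N} associated to some eigenvalue $\lambda$, so it solves $\Delta_{A_{x_0,\nu}}f=\lambda f$ weakly. The idea is to extract the $k$-th Fourier mode of this equation. Writing $f(r,t)=\sum_{k}a_k(r)e^{ikt}$ and applying the formula for $\Delta_{A_{x_0,\nu}}$ from the corollary preceding \eqref{radial_part}, the operator acts diagonally on the modes $e^{ikt}$: the $k$-th mode of $\Delta_{A_{x_0,\nu}}f$ is exactly $\left(-a_k''-\frac{\theta'}{\theta}a_k'+\frac{(k-\nu)^2}{\theta^2}a_k\right)e^{ikt}$. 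Projecting the eigenvalue equation onto $e^{ikt}$ (multiplying by $e^{-ikt}$ and integrating in $t$ over $[0,2\pi]$) therefore yields that $a_k$ satisfies the radial ODE in \eqref{SL_N} with eigenvalue parameter $\lambda$.

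Next I would address the boundary and regularity conditions so that $a_k$ is genuinely an eigenfunction of \eqref{SL_N}, not merely a solution of the ODE. The magnetic Neumann condition $\scal{\nabla^{A_{x_0,\nu}}f}{N}=0$ on $\partial B(x_0,R)$ becomes $f'(R,t)=0$ for all $t$ (since on a disk centered at the pole $\scal{\nabla^{A_{x_0,\nu}}f}{N}=\partial_r f$), and extracting the $k$-th mode gives $a_k'(R)=0$. For the behaviour at the pole, I would invoke the functional setting: since $f\in H^1_A(B(x_0,R),\mathbb C)$ and $\nu\notin\mathbb Z$, Lemma \ref{h} forces $f$, and hence each $a_k$, to lie in the weighted space with $a_k/r\in L^2$, which selects the bounded (integrable) solution branch at $r=0$ and rules out the singular one. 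Thus $a_k$ is a bounded solution of \eqref{SL_N}.

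Once $a_k$ is a bounded solution of the singular Sturm--Liouville problem \eqref{SL_N} with parameter $\lambda$, either $a_k\equiv 0$, or $\lambda$ must coincide with one of the eigenvalues $\lambda_{kj}$ of \eqref{SL_N} and $a_k$ must be a scalar multiple of the corresponding simple eigenfunction $u_{kj}$ (simplicity comes from Lemma \ref{lemma_N}, point $i)$). This gives precisely $a_k(r)=c_k u_{kj_k}(r)$ for some $j_k$ and $c_k\in\mathbb C$, which is the claim; the trivial case $c_k=0$ is harmlessly absorbed since the statement only asserts the form of $a_k$, and the nonvanishing $c_k\ne 0$ used in Theorem \ref{complete} is guaranteed there by the assumption $a_h\ne 0$.

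The main obstacle I expect is the rigorous justification of the termwise mode extraction at the singular point $x_0$: interchanging the differential operator with the infinite Fourier sum, and arguing that each $a_k$ inherits enough regularity to solve the ODE classically and to satisfy the correct selection condition at $r=0$. This is where the $H^1_A$ membership and the Hardy inequality of Lemma \ref{h} do the essential work, converting the weak eigenvalue equation into a pointwise radial ODE with the correct boundary behaviour; the rest is standard Sturm--Liouville theory already recorded in Lemma \ref{lemma_N}.
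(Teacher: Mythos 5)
Your proposal is correct and follows essentially the same route as the paper: both extract the $k$-th Fourier mode by multiplying by $e^{-ikt}$ and integrating (the paper does this via integration by parts in $t$ using $\Delta_{A_{x_0,\nu}}f=\lambda f$, exactly your diagonal-action projection), both obtain $a_k'(R)=0$ from the Neumann condition and $a_k(0)=0$ from $f(x_0)=0$ forced by membership in $H^1_A$ when $\nu\notin\mathbb Z$, and both conclude via simplicity of the Sturm--Liouville eigenvalues from Lemma \ref{lemma_N}. The only cosmetic difference is that you invoke the Hardy inequality of Lemma \ref{h} explicitly for the selection condition at the pole, whereas the paper states the same fact directly.
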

\begin{proof}
We show that for each $k\in\mathbb Z$ the function $a_k:[0,R]\to\reals$ is a bounded solution of
$$
\begin{cases}
a_k''+\frac {\theta'}{\theta} a_k'+\left(\lambda-\frac{(k-\nu)^2}{\theta^2}\right)a_k=0 & {\rm in\ } (0,R)\\
a_k'(R)=0.
\end{cases}
$$
In particular, since $\nu\notin\mathbb Z$, $a_k(0)=0$.
Hence, since the eigenvalues are simple, for every $k$, there exists $j=j_k$ and $c_k\in {\mathbb C}$ such that
$$
a_k(r)=c_ku_{k,j_k}(r),\quad \lambda=\lambda_{k,j_k}.
$$
If we assume $a_k\ne 0$, then $c_k\ne 0$.

First observe that $f(x_0)=0$ because otherwise $f$ is not in $H^1_A(\Omega,\mathbb C)$ when $\nu\notin\mathbb Z$. Then:
$$
0=f(x_0)=\sum_{k\in{\mathbb Z}}a_k(0)e^{ikt}
$$
for all $t$. Hence, by multiplying by $e^{-iht}$ and integrating on $[0,2\pi]$ we see that $a_h(0)=0$. Then $a_k(0)=0$ for all $k$. Similarly, the Neumann condition 
$\derive fr(R,t)=0$ implies $a'_k(R)=0$ for all $k$. 

\smallskip

It remains to show that $a_k$ is an eigenfunction of the problem \eqref{SL_N}.
First observe that
$$
a_k(r)=\int_0^{2\pi}f(r,t)e^{-ikt}\,dt
$$
Hence:
$$
-k^2a_k=\int_0^{2\pi}f(r,t)\deriven 2{}{t}e^{-ikt}\,dt
=\int_0^{2\pi}\deriven 2{f}{t}(r,t)e^{-ikt}\,dt
$$
therefore
\begin{equation}\label{principal}
-\frac{k^2}{\theta^2}a_k=\int_0^{2\pi}\frac{1}{\theta^2}\deriven 2{f}{t}(r,t)e^{-ikt}\,dt
\end{equation}
Now we know that $\Delta_{A_{x_0,\nu}}f=\lambda f$; hence
$$
\dfrac{1}{\theta^2}\deriven 2f{t}=-\lambda f-f''-\dfrac{\theta'}{\theta}f'+\dfrac{\nu^2}{\theta^2}f+2i\dfrac{\nu}{\theta^2}\derive f{t}.
$$
so that
\begin{multline*}
\int_0^{2\pi}\frac{1}{\theta^2}\deriven 2{f}{t}e^{-ikt}\,dt
=-\lambda\int_0^{2\pi}fe^{-ikt}dt-\int_0^{2\pi}f''e^{-ikt}dt\\
-\dfrac{\theta'}{\theta}\int_0^{2\pi}f'e^{-ikt}dt+\dfrac{\nu^2}{\theta^2}\int_0^{2\pi}fe^{-ikt}dt+2i\dfrac{\nu}{\theta^2}\int_0^{2\pi}\derive f{t}e^{-ikt}\,dt
\end{multline*}
Integrating by parts:
$$
\int_0^{2\pi}\derive f{t}e^{-ikt}\,dt=ik\int_0^{2\pi}fe^{-ikt}dt=ika_k.
$$
Then:
$$
\int_0^{2\pi}\frac{1}{\theta^2}\deriven 2{f}{t}e^{-ikt}\,dt=-\lambda a_k-a_k''-\frac{\theta'}{\theta}a_k'+\frac{\nu^2}{\theta^2}a_k-2k\frac{\nu}{\theta^2}
$$
which, substituted in \eqref{principal}, gives:
$$
a_k''+\frac{\theta'}{\theta}a_k'+\left(\lambda-\frac{(k-\nu)^2}{\theta^2}\right)a_k=0
$$
which is the final assertion. 

\end{proof}

In the following theorems we collect a few useful properties of the eigenfunction $u=u_{01}$ associated with the first eigenvalue $\lambda=\lambda_{01}$.

\begin{thm}\label{technical_sphere_0}
Let $\nu\in(0,\frac{1}{2}]$. Let $\lambda>0$ denote the first eigenvalue of problem \eqref{SL_N} with $k=0$ and $R\in(0,\bar R]$, and let $u$ denote a corresponding first eigenfunction.  Then the following statements hold:
\begin{enumerate}[i)]
\item $u(0)=0$ and $u\ne 0$ on $(0,R)$. In particular, we can choose $u>0$ on $(0,R)$.
\item $u'>0$ on $(0,R)$ and $\lambda>\frac{\nu^2}{\theta(R)^2}$.
\item The first eigenvalue $\lambda$ is strictly decreasing in $R$ for $R\in(0,\bar R)$. In particular $\lambda>\bar\lambda$, where $\bar\lambda$ is the first eigenvalue of \eqref{SL_N} with $R=\bar R$.
\end{enumerate}
\end{thm}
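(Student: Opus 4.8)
The plan is to prove the three assertions in order, with the bound $\lambda>\nu^2/\theta(R)^2$ in ii) serving as the hinge on which both ii) and iii) turn. Throughout I write $V=\nu^2/\theta^2$ and use the self-adjoint form of \eqref{SL_N} with $k=0$, obtained by multiplying by $\theta$, namely $(\theta u')'+(\lambda-V)\theta u=0$ on $(0,R)$. For i), I would first read off the behaviour at the singular endpoint: since $\theta(r)\sim r$ as $r\to0^+$, the indicial equation is $s^2-\nu^2=0$, so the two independent solutions behave like $r^{\nu}$ and $r^{-\nu}$, and the bounded (finite-energy) one is $\sim c\,r^{\nu}$; hence $u(0)=0$ because $\nu>0$ (consistent with the fact, recorded in Appendix \ref{functional}, that every element of $H^1_A$ vanishes at the pole). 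That $u$ has no interior zero is precisely the statement that the eigenfunction of the \emph{lowest} eigenvalue of a Sturm--Liouville problem is nodal-free on the open interval, so after a choice of sign $u>0$ on $(0,R)$.

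For ii) I would first prove $\lambda>\nu^2/\theta(R)^2$ purely variationally. Because $R\le\bar R$ and $\bar R$ is the first zero of $\theta'$, we have $\theta'>0$ and hence $V'=-2\nu^2\theta'/\theta^3<0$ strictly on $(0,R)$, so $V(r)>V(R)$ for every $r<R$. Feeding $u$ into the Rayleigh quotient (the analogue of \eqref{minmax_V} weighted with $\theta$) gives
$$\lambda=\frac{\int_0^R(u'^2+Vu^2)\theta\,dr}{\int_0^R u^2\theta\,dr}>\frac{\int_0^R V(R)u^2\theta\,dr}{\int_0^R u^2\theta\,dr}=\frac{\nu^2}{\theta(R)^2},$$
the inequality being strict since $(V-V(R))u^2\theta>0$ on $(0,R)$. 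With this in hand, $u'>0$ follows from a sign analysis of $w:=\theta u'$. Indeed $w'=(\theta u')'=(V-\lambda)\theta u$, and since $V$ decreases strictly from $V(0^+)=+\infty$ down to $V(R)<\lambda$, there is a unique $r^*\in(0,R)$ with $V(r^*)=\lambda$; thus $w'>0$ on $(0,r^*)$ and $w'<0$ on $(r^*,R)$. As $w(0^+)=0$ (from $\theta u'\sim c\nu\,r^{\nu}$) and $w(R)=0$ (the Neumann condition), $w$ rises from $0$ to a positive maximum at $r^*$ and decreases back to $0$, so $w>0$ throughout $(0,R)$; since $\theta>0$ this gives $u'>0$.

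For iii) I would avoid differentiating $\lambda(R)$ and argue instead by test-function extension. Fix $R_1<R_2\le\bar R$, let $u_1$ be the (positive, increasing) first eigenfunction on $(0,R_1)$, and extend it to $(0,R_2)$ by the constant $u_1(R_1)>0$ on $[R_1,R_2]$. Using this as a trial function in the Rayleigh quotient for $\lambda(R_2)$, a direct computation shows its quotient is strictly less than $\lambda(R_1)$ exactly when $\int_{R_1}^{R_2}V\theta<\lambda(R_1)\int_{R_1}^{R_2}\theta$; and this holds because $V\le V(R_1)<\lambda(R_1)$ on $[R_1,R_2]$ by the monotonicity of $V$ together with the bound from ii). Hence $\lambda(R_2)<\lambda(R_1)$, and taking $R_2=\bar R$ yields $\lambda>\bar\lambda$. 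As an alternative one may use the Hadamard formula $\lambda'(R)=(V(R)-\lambda)\,u(R)^2\theta(R)<0$, obtained by differentiating the normalised Rayleigh quotient and integrating by parts, but this requires justifying the smooth dependence of the eigenpair on $R$, which the variational argument sidesteps.

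The main obstacle is the singular endpoint $r=0$: every monotonicity conclusion rests on knowing $u(0)=0$ and $w(0^+)=\theta u'\to0$, and these must be extracted from the $r^{\pm\nu}$ indicial analysis rather than assumed. Once the correct boundary behaviour at the pole is secured, the variational bound $\lambda>\nu^2/\theta(R)^2$ drives everything else.
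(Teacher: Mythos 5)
Your proof is correct, and while parts i) and ii) track the paper's argument closely, part iii) takes a genuinely different route. For i) and ii) the paper works with the same auxiliary function $N(r)=\theta(r)u'(r)$ (your $w$), but in the reverse logical order: it establishes $u'>0$ from the sign pattern of $N'=\bigl(\nu^2/\theta^2-\lambda\bigr)\theta u$ together with $N(0)=N(R)=0$, and then reads off $\lambda>\nu^2/\theta(R)^2$ from $N'(R)<0$; you instead prove the eigenvalue bound first, variationally, from $V>V(R)$ on $(0,R)$, and only then run the sign analysis. Your ordering is arguably tidier: the paper's step that $N'$ ``vanishes exactly once in $(0,R)$'' tacitly uses that $\lambda$ lies strictly between $V(R)$ and $V(0^+)=+\infty$, which is essentially the bound being proven, whereas your Rayleigh-quotient inequality supplies it up front and removes any appearance of circularity. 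For iii) the paper differentiates the eigenvalue in $R$, obtaining the Hadamard-type formula $\lambda'(R)=\bigl(\nu^2/\theta(R)^2-\lambda(R)\bigr)u(R)^2\theta(R)\big/\int_0^R u^2\theta$, negative by ii), and appeals to \cite{dauge_helffer} for the (simple) eigenpair's smooth dependence on $R$. Your domain-monotonicity argument --- extend $u_1$ by the constant $u_1(R_1)$ and test on $(0,R_2)$, reducing strict decrease to $\int_{R_1}^{R_2}V\theta<\lambda(R_1)\int_{R_1}^{R_2}\theta$, which follows from $V\le V(R_1)<\lambda(R_1)$ on $[R_1,R_2]$ --- is more elementary, sidesteps perturbation theory entirely (as you note), and has the pleasant feature of reusing the constant-extension trick on which the proof of Theorem \ref{sch} itself is built; what it gives up is only the exact derivative formula, whose sign is all the paper ever uses. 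Both routes invoke the bound from ii) at the same hinge point, and your treatment of the singular endpoint (the $r^{\pm\nu}$ indicial analysis giving $u(0)=0$ and $\theta u'\to 0$) matches what the paper extracts from singular Sturm--Liouville theory and uses explicitly, via the Frobenius expansion $u\sim cr^{\nu}$, in the proof of Theorem \ref{technical_sphere}.
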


\begin{proof}
Point $i)$ has been already discussed in Lemma \ref{lemma_N} and follows from standard Sturm-Liouville theory and the fact that $\theta,\theta'>0$ on $(0,R)$, $\theta(r)\sim \theta'(0)r$ as $r\rightarrow 0^+$. 

\medskip

Consider now $ii)$. Set $N(r):=\theta(r)u'(r)$. Then $N(0)=N(R)=0$ and
$$
N'(r)=\left(\frac{\nu^2}{\theta^2(r)}-\lambda\right)\theta(r)u(r).
$$
Since $N'(r)>0$ on $(0,\delta)$ for some $\delta>0$, we deduce that the function $N$ increases from zero and decreases to zero. It has only one maximum point in $(0,R)$. In fact, since $\theta$ and $u$ are strictly positive in $(0,R)$, $N'(r)=0$ if and only if $\lambda=\frac{\nu^2}{\theta^2(r)}$. Since $\frac{1}{\theta^2}$ is strictly decreasing in $(0,R)$, it vanishes exactly once in $(0,R)$. This implies that $N>0$ on $(0,R)$ and therefore $u'>0$ on $(0,R)$. Moreover $N'(R)< 0$.

\medskip

As for $iii)$, let us write $\lambda=\lambda(R)$ to highlight the dependence on $R$. Then, as $\lambda(R)$ is simple, we derive the identity
$$
\int_0^R u^2\theta\lambda(R)=\int_0^R\left(u'^2+\frac{\nu^2}{\theta^2}u^2\right)\theta
$$
and obtain
$$
\lambda'(R)=\left(\frac{\nu^2}{\theta^2(R)}-\lambda(R)\right)\frac{u^2(R)\theta(R)}{\int_0^Ru^2\theta}, 
$$
which is strictly negative by point $ii)$ as long as $\theta'< 0$, that is, $R\in(0,\bar R)$. More details can be found e.g., in \cite{dauge_helffer}.
\end{proof}

\begin{thm}\label{technical_sphere}
Let $\nu\in(0,\frac{1}{2}]$. Let $\lambda>0$ denote the first eigenvalue of problem \eqref{SL_N} with $k=0$ and $R\in(0,\bar R)$, and let $u$ denote a corresponding eigenfunction. Let $F(r)=u'(r)^2+\frac{\nu^2u(r)^2}{\theta(r)^2}$. If either 
\begin{enumerate}[a)]
\item $\theta'(r)\geq \nu$ on $(0, R)$, or
\item $\bar \lambda\theta^2-\nu^2+\nu^2(\theta')^2+\nu\theta\theta''\geq 0$ on $(0, R)$,
\end{enumerate}
then $F'(r)\leq 0$ on $(0,R)$. If moreover $\theta^{\nu}$ is a solution to \eqref{SL_N} for $R=\bar R$, then  $F'(r)\leq 0$ on $(0,\bar R)$

\end{thm}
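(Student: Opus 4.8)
The plan is to reduce the whole statement to one scalar inequality and then lean on the qualitative information already secured in Theorem \ref{technical_sphere_0}, namely that on $(0,R)$ one has $u>0$, $u'>0$, $\theta'>0$ and $\lambda>\bar\lambda$. Differentiating $F=u'^2+\tfrac{\nu^2}{\theta^2}u^2$ and inserting $u''=-\tfrac{\theta'}{\theta}u'-\big(\lambda-\tfrac{\nu^2}{\theta^2}\big)u$ from \eqref{SL_N} gives
\[
F'=-2\tfrac{\theta'}{\theta}u'^2-2\lambda uu'+\tfrac{4\nu^2}{\theta^2}uu'-\tfrac{2\nu^2\theta'}{\theta^3}u^2,
\]
so that, setting $p:=u'/u>0$, one has $F'=-2u^2Q(p)$ with $Q(s):=\tfrac{\theta'}{\theta}s^2+\big(\lambda-\tfrac{2\nu^2}{\theta^2}\big)s+\tfrac{\nu^2\theta'}{\theta^3}$. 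Everything thus comes down to showing $Q(p(r))\ge 0$. Under hypothesis (a) I would simply discard the non-positive term $-2\lambda uu'$: what remains is exactly the quantity treated in Lemma \ref{lem_rev} with $V=\nu^2/\theta^2$, whose structural conditions $V'\le0$ and $\theta'V'+2V^2\theta\le0$ translate into $\theta'\ge0$ and $(\theta')^2\ge\nu^2$, both guaranteed by $\theta'\ge\nu$; Lemma \ref{lem_rev} then yields $F'\le0$ immediately.

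Under hypothesis (b), where $\theta'$ may drop below $\nu$, discarding the eigenvalue term loses too much, so instead I would pin down the range of $p$. The substitution $u=\theta^\nu v$ transforms \eqref{SL_N} into the Sturm--Liouville equation
\[
(\theta^{2\nu+1}v')'+\theta^{2\nu-1}Pv=0,\qquad P:=\lambda\theta^2-\nu^2+\nu^2(\theta')^2+\nu\theta\theta''.
\]
Writing $P=\big(\bar\lambda\theta^2-\nu^2+\nu^2(\theta')^2+\nu\theta\theta''\big)+(\lambda-\bar\lambda)\theta^2$ and using hypothesis (b) together with $\lambda>\bar\lambda$ gives $P>0$. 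Hence $\theta^{2\nu+1}v'$ is strictly decreasing; since the near-pole behaviour $u\sim c\,r^\nu$, $\theta\sim r$ forces $\theta^{2\nu+1}v'\to0$ as $r\to0^+$, we conclude $\theta^{2\nu+1}v'<0$, i.e. $v'<0$, on all of $(0,R)$. Equivalently, $0<p<\nu\theta'/\theta$.

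It then remains to check that $Q$ stays non-negative on the whole segment $[0,\nu\theta'/\theta]$ containing $p$. A computation gives $Q(0)=\nu^2\theta'/\theta^3>0$ and $Q(\nu\theta'/\theta)=\tfrac{\nu\theta'}{\theta^3}\big(\lambda\theta^2+\nu(\theta')^2+\nu(1-2\nu)\big)>0$, the last factor being positive because $\nu\in(0,\tfrac12]$. As $Q$ is an upward parabola positive at both endpoints, it can become negative inside only if both of its roots lie strictly within $(0,\nu\theta'/\theta)$; examining the vertex location and the sign of the discriminant, I would show that this configuration forces simultaneously $\theta'<\nu$ and $\theta'>1$, which is impossible for $\nu\le\tfrac12$. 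Hence $Q(p)\ge0$ and $F'\le0$. For the final assertion, if $\theta^\nu$ solves \eqref{SL_N} at $R=\bar R$ then $P\equiv0$, so $\theta^{2\nu+1}v'$ is constant equal to its vanishing limit; thus $v'\equiv0$, $p\equiv\nu\theta'/\theta$, and $F'=-2u^2Q(\nu\theta'/\theta)\le0$ on all of $(0,\bar R)$.

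The genuine difficulty is concentrated in this last interval estimate for $Q$: the naive sufficient condition, namely a non-positive discriminant of $Q$ (which would make $Q\ge0$ for every value of $s$), simply fails in the regime of interest. The argument must therefore combine the sign information on the roots of $Q$ (their product is $\nu^2/\theta^2>0$, so they share a sign) with the restriction $\nu\le\tfrac12$ in order to exclude the one configuration in which $Q$ would dip below zero between the endpoints; and it is precisely here that the term $\theta''$ entering hypothesis (b) does its work, hidden inside the positivity of $P$ and hence inside the bound $p<\nu\theta'/\theta$.
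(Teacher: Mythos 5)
Your proposal is correct, but under hypothesis (b) it takes a genuinely different route from the paper's. The paper works with the Riccati variable $q=\theta u'/u$ and splits $F'=2qq'\frac{u^2}{\theta^2}+\frac{2u^2}{\theta^3}(q^2+\nu^2)(q-\theta')$, so it must establish two dynamical facts: $q'\le 0$ (via a three-point convexity contradiction on the Riccati equation) and $q<(\nu+\delta)\theta'$ (via the comparison Lemma \ref{tec}, where the auxiliary $\delta>0$ is forced because $q(0)=\nu\theta'(0)$ makes the $\delta=0$ barrier violate the strict initial inequality of that lemma). You dispense with the monotonicity of $q$ altogether: the substitution $u=\theta^\nu v$ puts \eqref{SL_N} in self-adjoint form, and $P>0$ (hypothesis (b) combined with $\lambda>\bar\lambda$ from Theorem \ref{technical_sphere_0}) makes $\theta^{2\nu+1}v'$ strictly decreasing with nonpositive limit at $0$ (monotonicity plus boundedness of $v$ rules out a positive limit, since $\theta^{-2\nu-1}$ is non-integrable at $0$), yielding directly the strict barrier $u'/u<\nu\theta'/\theta$, i.e. $q<\nu\theta'$, with no $\delta$-limiting; indeed your condition $P>0$ is exactly the paper's barrier inequality $G(r,\nu\theta')<(\nu\theta')'$ in disguise. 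The price is the pointwise root-location analysis of $Q$, which you left as a sketch; I checked that it closes as you claim: with $\mu=\frac{2\nu^2}{\theta^2}-\lambda$, two real roots of $Q$ inside $\left(0,\frac{\nu\theta'}{\theta}\right)$ require the vertex in that interval, so $0<\mu<\frac{2\nu^2}{\theta^2}$ and $\mu<\frac{2\nu(\theta')^2}{\theta^2}$, together with nonnegative discriminant, $\mu\ge\frac{2\nu\theta'}{\theta^2}$; the first pair of bounds forces $\theta'<\nu$ and the second pair forces $\theta'>1$, incompatible since $\nu\le\frac12$. Your case (a) is legitimately reduced to Lemma \ref{lem_rev} (whose hypotheses with $V=\nu^2/\theta^2$ are precisely $\theta'\ge0$ and $(\theta')^2\ge\nu^2$, and whose positivity requirements on $u,u'$ are supplied by Theorem \ref{technical_sphere_0}), and your endgame for $R=\bar R$ ($P\equiv0$, hence $v'\equiv0$, hence $F'=-2u^2Q(\nu\theta'/\theta)$ with $Q\left(\frac{\nu\theta'}{\theta}\right)=\frac{\nu\theta'}{\theta^3}\left(\lambda\theta^2+\nu(\theta')^2+\nu(1-2\nu)\right)>0$) also checks out. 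In sum: the paper's route produces the structural fact $q'<0$, which it reuses, while yours gives the sharper bound $q<\nu\theta'$ and avoids both the three-point argument and the comparison lemma, at the cost of the quadratic case analysis.
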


\begin{proof}
Define
$$
q(r)=\theta(r)\frac{u'(r)}{u(r)}.
$$
Then $F=u'^2+\frac{\nu^2 u}{\theta^2}=\frac{u^2}{\theta^2}(q^2+\nu^2)$,  and with standard computations we see that
$$
F'=2qq'\frac{u^2}{\theta^2}+\frac{2 u^2}{\theta^3}(q^2+\nu^2)(q-\theta').
$$
Since $u,\theta,\theta'>0$ on $(0,R)$, the claim follows provided $q'\leq 0$ and $q\leq\theta'$ on $(0,R)$. 

\medskip

We start by proving $q'\leq 0$ on $(0,R)$. We see that
$$
q'=-\theta\lambda+\frac{\nu^2}{\theta}-\frac{q^2}{\theta}.
$$
The local behavior of $u$ near $r=0^+$ depends only on the differential equation \eqref{SL_N}, and in particular, using the Frobenius-Taylor expansion, we deduce that $u\sim c r^{\nu}$ as $r\rightarrow 0^+$ for some $c>0$. Therefore $q(0)=\nu$, $q'(0)=0$. Clearly there exists $a>0$ such that $q\not\equiv\nu$ on $(0,a)$, otherwise $q'=-\theta\lambda\ne 0$ on $(0,a)$. Choose then (a possibly smaller) $a>0$ such that $q,q'$ have constant sign on $(0,a)$ (in particular, $q>0$ on $(0,R)$). We have $q'<0$  in $(0,a)$. Otherwise, $q>q(0)=\nu$, but then the differential equation tells then $q'<0$. Then $q$ starts decreasing. The same argument  shows that $0<q<\nu$ on $(0,R)$. If $q'$ changes sign in $(0,R)$, let then $b,c,d\in(0,R)$, $b<c<d$, be such that $q'(b)<0$, $q'(c)>0$, $q'(d)<0$ and $q(b)=q(c)=q(d)=\bar q<\nu$. Such points exist if $q'$ changes sign since $q(R)=0$. We note now that, since $\theta'>0$ in $(0,R)$, $\theta(c)=t\theta(b)+(1-t)\theta(d)$ for some $t\in(0,1)$. Then, from the differential equation for $q'$, we see that
\begin{multline*}
0<q'(c)=-\lambda\theta(c)+	\frac{\nu^2-\bar q^2}{\theta(c)}\\
\leq t\left(-\lambda\theta(b)+	\frac{\nu^2-\bar q^2}{\theta(b)}\right)+(1-t)\left(-\lambda\theta(d)+	\frac{\nu^2-\bar q^2}{\theta(d)}\right)=tq'(b)+(1-t)q'(d)<0,
\end{multline*}
a contradiction. Then $q'<0$ on $(0,R)$. Note that $q'<0$ also in the case $R=\bar R$.

\medskip

It remains to prove that $q\leq\theta'$. Note that, if $\theta'\geq\nu$ on $(0,R)$, since $q$ is decreasing and $q(0)=\nu$, we have $q\leq\theta'$. We have used condition $a)$. Note that this condition has already been found in Theorem \ref{schr}. 

\medskip Assume now $b)$ holds. Let $R\in(0,\bar R)$. In order to conclude, it is sufficient to prove
\begin{equation}\label{cond1}
q\leq (\nu+\delta)\theta'
\end{equation}
on $(0,R)$, for some $\delta\in(0,1-\nu)$.  For any $\delta>0$ we have $q(0)<(\nu+\delta)\theta'(0)$
and
$$
q'(r)=G(r,q(r)),
$$
where $G(r,q)$ is defined as
$$
G(r,q):=-\lambda\theta(r)+\frac{\nu^2}{\theta(r)}-\frac{q^2}{\theta(r)}.
$$
Note also that, since $R\in(0,\bar R)$, we have from $iii)$ of Theorem \ref{technical_sphere_0} that $\lambda=\bar\lambda+\epsilon$ for some $\epsilon>0$.

Let us compute
\begin{multline*}
G(r,(\nu+\delta)\theta'(r))-((\nu+\delta)\theta'(r))'=-\lambda\theta(r)+\frac{\nu^2}{\theta(r)}-(\nu+\delta)^2\frac{\theta'(r)^2}{\theta(r)}-(\nu+\delta)\theta''(r)\\
=-\epsilon\theta(r)-\frac{1}{\theta}\left(\bar\lambda\theta^2(r)-\nu^2+\nu^2\theta'(r)^2+\nu\theta(r)\theta''(r)\right)\\
-\frac{\delta}{\theta}\left(\theta'(r)^2(\delta+2\nu)+\theta''(r)\theta(r)\right)
\end{multline*}
The second summand is less or equal than zero by hypothesis, therefore we can choose $\delta\in(0,1-\nu)$ arbitrarily small so that 
$$
G(r,(\nu+\delta)\theta'(r))-((\nu+\delta)\theta'(r))'<0
$$
on $(0,R)$.
From Lemma \ref{tec} here below  with $v=q$ and $w=(\nu+\delta)\theta'$, $a=0$, $b=R$, we deduce that
$$
q<(\nu+\delta)\theta'
$$
as claimed.

\medskip

In the case $R=\bar R$, if $u=\theta^{\nu}$, then $q=\nu\theta'\leq\theta'$ on $(0,\bar R)$.

\end{proof}

\begin{lemma}\label{tec}
Suppose $v,w$ continuous on an interval $[a,b]$ and differentiable on $(a,b]$, and let $G:(a,b]\times\mathbb R\rightarrow\mathbb R$ continuous. Suppose that
$$
v(a)<w(a)
$$
and
\begin{equation}\label{tec1}
v'(r)-G(r,v(r))<w'(r)-G(r,w(r))
\end{equation}
on $(a,b]$. Then $v<w$ on $[a,b]$.
\end{lemma}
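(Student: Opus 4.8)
The plan is to argue by contradiction, tracking the sign of the difference $h := w - v$. First I would record that $h$ is continuous on $[a,b]$, differentiable on $(a,b]$, and satisfies $h(a) = w(a) - v(a) > 0$; by continuity $h$ stays positive on a right-neighborhood of $a$. I then introduce the closed set $Z := \{r \in [a,b] : h(r) \le 0\}$ and suppose, toward a contradiction, that the desired conclusion $v < w$ fails, so that $Z \neq \emptyset$.

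Next I would show that $r_0 := \inf Z$ is a genuine \emph{first} crossing. Since $h > 0$ near $a$, we have $r_0 > a$, so $r_0 \in (a,b]$ and $h$ is differentiable at $r_0$. By definition of the infimum, $h > 0$ on $[a, r_0)$, whence continuity gives $h(r_0) \ge 0$; on the other hand $Z$ is closed (being the preimage of $(-\infty,0]$ under the continuous map $h$), so $r_0 \in Z$ and $h(r_0) \le 0$. Therefore $h(r_0) = 0$, i.e.\ $v(r_0) = w(r_0)$.

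The key step is to extract two incompatible pieces of information at $r_0$. On the geometric side, because $h > 0$ on $[a, r_0)$ and $h(r_0) = 0$, the left difference quotients $\frac{h(r_0)-h(r)}{r_0 - r} = \frac{-h(r)}{r_0-r}$ are $\le 0$ for $r < r_0$, and passing to the limit yields $h'(r_0) \le 0$. On the analytic side, evaluating the hypothesis \eqref{tec1} at $r = r_0$ and using $v(r_0) = w(r_0)$ — which makes $G(r_0, v(r_0))$ and $G(r_0, w(r_0))$ cancel — gives $v'(r_0) < w'(r_0)$, that is $h'(r_0) > 0$. These contradict one another, so $Z$ must be empty, $h > 0$ throughout $[a,b]$, and hence $v < w$ on $[a,b]$.

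I do not expect any real obstacle, as this is a standard first-order differential-inequality comparison; the only point that genuinely requires care is that the strict inequality in \eqref{tec1} is exactly what produces the strict sign $h'(r_0) > 0$ at the first contact point, overriding the weak inequality $h'(r_0) \le 0$ forced by the geometry of a first zero. The cancellation of the $G$-terms is automatic because $G$ is evaluated at the common value $v(r_0) = w(r_0)$, so continuity of $G$ is all that is used and no Lipschitz or monotonicity assumption on $G$ is needed.
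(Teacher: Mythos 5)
Your proof is correct and follows essentially the same route as the paper's: a first-crossing argument in which the difference $w-v$ has nonpositive derivative at the first contact point by the geometry of a first zero, while hypothesis \eqref{tec1} (with the $G$-terms cancelling at the common value) forces a strictly positive derivative there. The only difference is that you carefully construct the first crossing as $\inf\{r: w(r)-v(r)\le 0\}$, whereas the paper simply posits its existence; your version fills in that routine detail.
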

\begin{proof}
Assume that there exists $c\in(a,b]$ such that $v(c)=w(c)$ and $v-w<0$ on $[a,c)$. Then clearly $v'(c)\geq w'(c)$. On the other hand, inserting $v(c)=w(c)$ in \eqref{tec1}, we see that $v'(c)<w'(c)$. A contradiction.
\end{proof}

\subsection{Eigenfunctions of the Steklov problem}\label{rev_S}

The same results of the previous subsection hold also for the Steklov problem \eqref{AB}. For the reader's convenience, we briefly resume them, even if in this note we almost only consider the Steklov problem for domains in $\mathbb R^2$.

When looking for solutions on $B(x_0,R)$ of the form $f(r,t)=u(r)e^{ikt}$  we get the following singular Sturm-Liouville problem, for $k\in\mathbb Z$:
\begin{equation}\label{revolution_steklov}
\begin{cases}
u''+\frac {\theta'}{\theta} u'-\frac{(k-\nu)^2}{\theta^2}u=0 & {\rm in\ } (0,R),\\
u'(R)=\sigma u(R).
\end{cases}
\end{equation}
with the requirement that a solution is bounded near $0$ (and therefore, that it is vanishes at $0$ when $\nu\notin\mathbb Z$).

We have the following

\begin{lemme}\label{lemma_S} Let $\nu\in (0,\frac 12]$. Then problem \eqref{revolution_steklov} admits a unique solution $u_k(r)$ with $u_k(0)=0$ for all $k\in\mathbb Z$. Let us set
$$
\eta_k=\frac{u_k'(R)}{u_k(R)}.
$$
 If $\abs{k}\leq \abs{h}$ then:
$$
\eta_{k}\leq \eta_{h}.
$$
Then the set $\{\sigma_k(B(x_0,R),A_{x_0,\nu})\}_{k=1}^{\infty}$ of the Steklov eigenvalues on $B(x_0,R)$ coincides with $\{\eta_k\}_{k\in\mathbb Z}$. In particular, $\min_{k}\{\eta_k\}=\eta_0$, therefore
$$
\sigma_1(B(x_0,R),A_{x_0,\nu})=\eta_{0}.
$$
 The eigenfunctions corresponding to an eigenvalues $\sigma=\eta_k$ of \eqref{AB} are given by
$$
\psi_k(r,t)=u_k(r)e^{ikt}.
$$
In particular, an eigenfunction associated to $\sigma_1(B(x_0,R),A_{x_0,\nu})$  is given by $\psi_0(r,t)$ which is real and radial. The restrictions of $\{\psi_k(r,t)\}_k$ to $\partial B(x_0,R)$ form a orthonormal system in $L^2(\partial B(x_0,R))$
\end{lemme}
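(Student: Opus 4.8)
The plan is to follow the structure used for the Neumann modes in Lemma \ref{lemma_N}, Theorem \ref{complete} and Lemma \ref{lemma_ak}, adapting it to the Steklov boundary condition. First I would settle existence and uniqueness of the radial profile $u_k$. Writing the equation in \eqref{revolution_steklov} in self-adjoint form
$$
(\theta u')'=\frac{(k-\nu)^2}{\theta}\,u,
$$
the pole $r=0$ is a regular singular point, and since $\theta(r)\sim\theta'(0)r=r$ as $r\to0^+$ the indicial roots are $\pm|k-\nu|$. As $\nu\in(0,\frac12]$ gives $|k-\nu|>0$ for every $k\in\mathbb Z$, exactly one Frobenius solution (up to a multiplicative constant) stays bounded at the pole, behaving like $r^{|k-\nu|}$ and hence vanishing there; this is the desired $u_k$, and the requirement $u_k(0)=0$ pins it down up to scaling. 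The Steklov parameter is then forced, since $u_k'(R)=\sigma u_k(R)$ holds exactly for $\sigma=\eta_k=u_k'(R)/u_k(R)$. I would also note, paralleling Theorem \ref{technical_sphere_0}, that $u_k$ may be taken real with $u_k,u_k'>0$ on $(0,R]$: near the pole $u_k\sim cr^{|k-\nu|}$ with $c>0$, and $(\theta u_k')'=\frac{(k-\nu)^2}{\theta}u_k\ge0$ then forces $\theta u_k'$ to increase from $0$ and stay positive, whence $u_k'>0$ and $\eta_k>0$.

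Next I would obtain the monotonicity from a variational characterization of $\eta_k$. Restricting the Rayleigh quotient \eqref{minmax} to functions $u(r)e^{ikt}$ and integrating in $t$ identifies
$$
\eta_k=\inf_u\frac{\int_0^R\left(u'^2+\frac{(k-\nu)^2}{\theta^2}u^2\right)\theta\,dr}{\theta(R)\,u(R)^2},
$$
the infimum over radial $u$ bounded at the pole with $u(R)\ne0$; the Euler--Lagrange analysis (integration by parts against the equation, with the boundary term at $0$ vanishing) confirms the minimizer is $u_k$ and the minimum is $u_k'(R)/u_k(R)$. Because $(k-\nu)^2$ enters the numerator as a nonnegative weight, testing the quotient for $k$ with the minimizer for $h$ gives $\eta_k\le\eta_h$ whenever $(k-\nu)^2\le(h-\nu)^2$. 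Since $\nu\in(0,\frac12]$ forces $\min_{k\in\mathbb Z}(k-\nu)^2=\nu^2$, attained only at $k=0$ (indeed $|k-\nu|\ge1-\nu\ge\frac12\ge\nu$ for $k\ne0$), I conclude $\min_k\eta_k=\eta_0$ and hence $\sigma_1(B(x_0,R),A_{x_0,\nu})=\eta_0$, with real radial eigenfunction $\psi_0$.

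It then remains to show that the whole spectrum of \eqref{AB} on $B(x_0,R)$ is exactly $\{\eta_k\}_{k\in\mathbb Z}$, with eigenfunctions $\psi_k(r,t)=u_k(r)e^{ikt}$. Here I would transcribe the completeness argument of Theorem \ref{complete} and Lemma \ref{lemma_ak}: given a Steklov eigenfunction $f$ with eigenvalue $\sigma$, expand $f(r,t)=\sum_k a_k(r)e^{ikt}$; projecting $\Delta_{A_{x_0,\nu}}f=0$ and the boundary condition onto the $k$-th mode shows that every nonzero $a_k$ is a bounded solution of the radial equation with $a_k(0)=0$ (membership in $H^1_{A_{x_0,\nu}}$ forces $f(x_0)=0$) and $a_k'(R)=\sigma a_k(R)$, so $a_k=c_ku_k$ and necessarily $\sigma=\eta_k$. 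This identifies both the eigenvalues and the eigenspaces. Orthonormality on the boundary is then immediate, since $\psi_k|_{\partial B(x_0,R)}=u_k(R)e^{ikt}$ and $\int_0^{2\pi}e^{i(k-h)t}\,dt=2\pi\delta_{kh}$, so after rescaling each $u_k$ the traces form an orthonormal system in $L^2(\partial B(x_0,R))$.

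I expect the main obstacle to be this last completeness step rather than the ODE analysis: one must justify the termwise projection of the magnetic Laplacian onto Fourier modes and control the behavior at the singular pole (in particular the vanishing $a_k(0)=0$ and the boundedness of each $a_k$), exactly the delicate points already handled in the Neumann setting in Lemma \ref{lemma_ak}.
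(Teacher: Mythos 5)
Your proof is correct, and its overall skeleton (separation into Fourier modes $u_k(r)e^{ikt}$, identification of the full spectrum by transcribing the completeness argument of Theorem \ref{complete} and Lemma \ref{lemma_ak}, boundary orthonormality from $\int_0^{2\pi}e^{i(k-h)t}\,dt=2\pi\delta_{kh}$) matches what the paper does, since its proof simply declares itself ``analogous to that of Lemma \ref{lemma_N}''. Where you genuinely diverge is the ODE analysis: the paper's entire argument rests on the observation that the radial equation admits the closed-form bounded solution $u_k(r)=\exp\bigl(\int_R^r\frac{|k-\nu|}{\theta(s)}\,ds\bigr)$ (equivalently, $q=\theta u_k'/u_k\equiv|k-\nu|$ is a constant solution of the associated Riccati equation), which at one stroke gives existence, uniqueness up to scaling (the companion solution $\exp\bigl(-\int_R^r\frac{|k-\nu|}{\theta(s)}\,ds\bigr)\sim r^{-|k-\nu|}$ is unbounded at the pole), positivity and monotonicity of $u_k$, and the explicit value $\eta_k=|k-\nu|/\theta(R)$, from which $\eta_k\le\eta_h$ for $|k|\le|h|$ and $\sigma_1(B(x_0,R),A_{x_0,\nu})=\eta_0$ are read off instantly. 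You instead obtain existence and uniqueness from Frobenius analysis at the regular singular point, and monotonicity from a mode-wise variational characterization of $\eta_k$ combined with the pointwise comparison $(k-\nu)^2\le(h-\nu)^2$; both steps are sound (the boundary term at $r=0$ in your Euler--Lagrange computation vanishes because $\theta u'u\sim r^{2|k-\nu|}$), and your route is more robust in that it requires no lucky closed form, at the cost of never producing the explicit eigenvalues, which the paper gets for free and which make all assertions of the lemma transparent (consistently with $\eta_k=|k-\nu|$ for the unit disk in Appendix \ref{disk_S}). One small slip: for $\nu=\tfrac12$ the minimum of $|k-\nu|$ is attained at both $k=0$ and $k=1$, so your parenthetical ``attained only at $k=0$'' fails at the endpoint; this is harmless here, since the lemma claims neither simplicity nor uniqueness of the minimizing mode (the paper itself notes the double eigenvalues at $\nu=\tfrac12$ in Appendix \ref{disk_S}).
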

\begin{proof}
The proof is analogous of that of Lemma \ref{lemma_N}. It is sufficient to note that a bounded solution of \eqref{revolution_steklov} is of the form.
$$
u_k(r)=e^{\int_{R}^r\frac{|k-\nu|}{\theta(s)}ds}.
$$
The last statement is straightforward.
\end{proof}

\subsection{The magnetic Neumann spectrum of the unit disk}\label{disk}

We consider the particular case $M=B(x_0,1)$, where $B(x_0,1)$ is the unit disk in $\mathbb R^2$ centered at $x_0$. We recall that $A=\nu dt$ and we assume $\nu\in(0,\frac 12]$. Here $(r,t)$ are the standard polar coordinates in $\mathbb R^2$ centered at $x_0$. In this case $\theta(r)=r$.

\smallskip

We can describe the spectrum of \eqref{AB_N} on $B(x_0,1)$ more explicitly:

 \begin{thm}
Let $\nu\in(0,\frac 12]$. Then:
\begin{enumerate}[i)]
\item The spectrum of \eqref{AB_N} on $B(x_0,1)$ consists of the numbers $\lambda_{kj}$, $k\in\mathbb Z$, $j=1,2,...$, where $\sqrt{\lambda_{kj}}=z'_{|k-\nu|,j}$ and $z'_{\mu,j}$ denotes the $j$-th positive zero of $J_{\mu}'(\sqrt{\lambda})$. Here $J_{\mu}(z)$ denotes the Bessel function of the first kind and order $\mu$.

\item The eigenspace associated to $\lambda_{kj}$ is spanned by $\psi_{kj}(r,t)=J_{|k-\nu|}(\sqrt{\lambda_{kj}}r)e^{ikt}$.

\item The lowest eigenvalue is $\lambda_1(B(x_0,1),A_{x_0,\nu})=\lambda_{01}$ and a first eigenfunction is $\psi_{01}(r,t)=J_{\nu}(\sqrt{\lambda_{01}}\,r)$. It is real and radial.
\item The set of eigenfunctions $\{\psi_{kj}\}_{k,j}$ is complete in $L^2(B(x_0,1))$.
\end{enumerate}
\end{thm}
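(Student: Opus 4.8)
The plan is to specialize the revolution-manifold analysis of Appendix \ref{sub:rev_N} to the flat disk, where $\theta(r)=r$, so that the radial Sturm--Liouville problem \eqref{SL_N} becomes Bessel's equation, and then to read off the spectrum from the zeros of $J'_{|k-\nu|}$. First I would substitute $\theta(r)=r$ into \eqref{SL_N}: for each $k\in\mathbb Z$ the radial equation is
$$
u''+\frac{1}{r}u'+\Big(\lambda-\frac{(k-\nu)^2}{r^2}\Big)u=0 \qquad\text{on } (0,1),
$$
subject to $u'(1)=0$ together with boundedness at $r=0$. Writing $\mu=|k-\nu|$ and rescaling $x=\sqrt{\lambda}\,r$ turns this into the standard Bessel equation $u''+\frac1x u'+(1-\mu^2/x^2)u=0$ of order $\mu$, whose solution space is spanned by $J_\mu(x)$ and a second solution that is singular as $x\to0^+$. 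Since $\nu\in(0,\frac12]$ forces $\mu=|k-\nu|\notin\mathbb Z$, one may take this second solution to be $J_{-\mu}$, which genuinely blows up at the origin; hence the unique (up to scaling) bounded solution is $u(r)=J_{|k-\nu|}(\sqrt{\lambda}\,r)$.

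Next I would impose the boundary condition. From $\frac{d}{dr}J_{|k-\nu|}(\sqrt{\lambda}\,r)=\sqrt{\lambda}\,J'_{|k-\nu|}(\sqrt{\lambda}\,r)$ we see that $u'(1)=0$ is equivalent to $J'_{|k-\nu|}(\sqrt{\lambda})=0$, i.e. $\sqrt{\lambda}$ is a positive zero of $J'_{|k-\nu|}$. Enumerating these as $z'_{|k-\nu|,j}$, $j=1,2,\dots$, produces the eigenvalues $\lambda_{kj}=(z'_{|k-\nu|,j})^2$ of \eqref{SL_N} for the index $k$, and, recombining with the angular factor as in \eqref{radial_part}, the eigenfunctions $\psi_{kj}(r,t)=J_{|k-\nu|}(\sqrt{\lambda_{kj}}\,r)e^{ikt}$. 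This gives i) and ii). That these numbers exhaust the full spectrum of \eqref{AB_N} on $B(x_0,1)$, and that $\{\psi_{kj}\}_{k,j}$ is complete in $L^2(B(x_0,1))$, is precisely Theorem \ref{complete} applied with $R=1$; this is statement iv).

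For iii), Lemma \ref{lemma_N} has already established $\min_{k,j}\lambda_{kj}=\lambda_{01}$, so the lowest eigenvalue is attained at $k=0$, $j=1$, with eigenfunction $\psi_{01}(r,t)=J_\nu(\sqrt{\lambda_{01}}\,r)$, which is manifestly real and radial.

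The computational steps---the Bessel substitution and the differentiation of $J_\mu$ at the boundary---are routine, and the structural facts (discreteness, the ordering $\lambda_{k1}\le\lambda_{h1}$ for $|k|\le|h|$, and completeness) are already available from Appendix \ref{sub:rev_N}. I therefore do not anticipate a serious obstacle; the one point needing a word of justification is that $J_{|k-\nu|}$ is the bounded solution, which relies on $|k-\nu|\notin\mathbb Z$ so that the companion solution $J_{-|k-\nu|}$ is truly singular at the origin and linearly independent.
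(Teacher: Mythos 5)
Your proposal is correct and follows essentially the same route as the paper: specializing \eqref{SL_N} to $\theta(r)=r$, reducing to Bessel's equation via $x=\sqrt{\lambda}\,r$, discarding the singular solution $J_{-|k-\nu|}$ (legitimate since $\nu\in(0,\frac12]$ gives $|k-\nu|\notin\mathbb Z$), imposing $u'(1)=0$ to get the zeros $z'_{|k-\nu|,j}$, and then invoking Lemma \ref{lemma_N} and Theorem \ref{complete} for the ordering, exhaustion of the spectrum, and completeness. No gaps; the one point you flag as needing justification (singularity of the companion solution) is exactly the point the paper also singles out.
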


\begin{proof}
We consider \eqref{SL_N} with $\theta(r)=r$ and $R=1$. As in Subsection \ref{sub:rev_N}, when looking for solutions of the form $f(r,t)=u(r)e^{ikt}$ in polar coordinates $(r,t)$ , we obtain the following differential equation for $u$:
\begin{equation}\label{Bessel}
u''+\frac 1r u'+\left(\lambda-\frac{(k-\nu)^2}{r^2}\right)u=0.
\end{equation}
With the substitution $\sqrt{\lambda}r=z$, this equation can be recast to a standard Bessel equation. Therefore, a couple of linearly independent solutions of \eqref{Bessel} is given by $J_{|k-\nu|}(\sqrt{\lambda}r)$, $J_{-|k-\nu|}(\sqrt{\lambda}r)$, since by hypothesis $\nu\notin\mathbb Z$. Therefore 
$$
u(r)=b_{k1}J_{|k-\nu|}(\sqrt\lambda r)+b_{k2} J_{-|k-\nu|}(\sqrt\lambda r).
$$
Note that, while $J_{|k-\nu|}$ vanishes at the origin whenever $\nu\notin\mathbb Z$, the function $J_{-|k-\nu|}$ is unbounded near $r=0$. Any bounded solution of \eqref{Bessel} is given by setting $b_{k2}=0$. Therefore, any solution of \eqref{SL_N} with $\theta(r)=r$ has the form 
$$
u_k(r)=b_{k1}J_{|k-\nu|}(\sqrt\lambda r).
$$
We impose now the Neumann boundary condition $u_k'(1)=0$. We recall that for any $\mu>0$, the function $J_{\mu}$ has an infinite set of positive zeroes, denoted  by $z_{\mu j}$, $j=1,2,...$. Moreover, the function $J'_{\mu}$ an infinite number of positive zeroes as well, namely $z'_{\mu j}$, $j=1,2,...$. 

\smallskip

Then, points $i)$-$iv)$ follow from Lemma \ref{lemma_N} and Theorem \ref{complete} with $\sqrt{\lambda_{kj}}=z'_{|k-\nu|,j}$ and  $\psi_{kj}(r,t)=J_{|k-\nu|}(\sqrt{\lambda_{kj}}r)e^{ikt}$.

\end{proof}

If $\nu\in (0,\frac 12)$ then the set $\{|k-\nu|\}_{k\in\mathbb Z}$ is given by $\nu,1-\nu,1+\nu,2-\nu,2+\nu,...$. For example, when $\nu=\frac 14$ we have $\frac 14, \frac 34, \frac 54, \frac 74,...$
If $\nu\in(0,\frac 12)$, we immediately see that $\lambda_1(B(x_0,1),A_{x_0,\nu})$ is simple, and a first eigenfunction is real and radial:
$$
\psi_{01}(r,t)=J_{\nu}(\sqrt{\lambda_{01}} r).
$$
Such function is is positive and increasing on $(0,1)$.

\smallskip

When $\nu=\frac 12$, we have that the set of $\{|k-\nu|\}_{k\in\mathbb Z}$ is given by $\frac 12, \frac 12, \frac 32,\frac 32, \dots$,
and the first eigenvalue has multiplicity $2$ with eigenspace spanned by the eigenfunctions with $k=0$ and $k=1$:
$$
\twosystem
{\psi_{01}(r,t)=J_{\frac 12}(\sqrt{\lambda_{01}}r)}
{\psi_{11}(r,t)=J_{\frac 12}(\sqrt{\lambda_{01}}r)e^{it}}
$$
and $\lambda_{01}=\lambda_{11}=z'_{\frac 12,1}$ is the first zero of $J'_{\frac 12}$.

\subsection{The magnetic Steklov spectrum of the unit disk}\label{disk_S}

In this section we investigate the spectrum of problem \eqref{AB} on $B(x_0,1)\subset\mathbb R^2$. Also in this section we assume that $\nu\in(0,\frac 12]$.

We apply the results of Subsection \ref{rev_S}  with $\theta(r)=r$ and $R=1$.

\begin{thm}
Let $\nu\in(0,\frac 12]$. Then:
\begin{enumerate}[i)]
\item The spectrum of \eqref{AB} on $B(x_0,1)$ consists of the numbers $\eta_k=|k-\nu|, \quad k\in\mathbb Z$.
\item The eigenspace associated to $\eta_k$ is spanned by $\psi_{k}(r,t)= r^{|k-\nu|}e^{ikt}$.
\item The lowest eigenvalue is $\sigma_1(B(x_0,1),A_{x_0,\nu})=\eta_{0}=\nu$. A first eigenfunction is $\psi_1(r,t)=r^{\nu}$.
It is real and radial.
\item The restrictions of $\{\psi_{kj}\}_{k,j}$ to $\partial B(x_0,1)$ is complete in $L^2(\partial B(x_0,1))$.
\end{enumerate}
\end{thm}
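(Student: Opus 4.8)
The plan is to specialize the revolution-manifold analysis of Subsection~\ref{rev_S} to the flat density $\theta(r)=r$ and radius $R=1$, for which the radial equation \eqref{revolution_steklov} becomes an Euler (equidimensional) equation and can be solved in closed form. First I would write \eqref{revolution_steklov} with $\theta(r)=r$, namely
$$
u''+\frac 1r u'-\frac{(k-\nu)^2}{r^2}u=0\qquad\text{on }(0,1),
$$
and note that, being equidimensional, it admits the two independent solutions $r^{|k-\nu|}$ and $r^{-|k-\nu|}$, where $|k-\nu|>0$ for every $k$ because $\nu\notin\mathbb Z$. Boundedness near the origin forces $u_k(r)=r^{|k-\nu|}$, which is precisely the profile predicted by the general formula for $u_k$ in Lemma~\ref{lemma_S}.

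Next I would read off the Steklov boundary parameter. Since $u_k'(r)=|k-\nu|\,r^{|k-\nu|-1}$, one has $u_k(1)=1$ and $u_k'(1)=|k-\nu|$, so that
$$
\eta_k=\frac{u_k'(1)}{u_k(1)}=|k-\nu|.
$$
By Lemma~\ref{lemma_S} the Steklov spectrum of \eqref{AB} on $B(x_0,1)$ is therefore exactly $\{\eta_k\}_{k\in\mathbb Z}=\{|k-\nu|\}_{k\in\mathbb Z}$, with eigenfunctions $\psi_k(r,t)=r^{|k-\nu|}e^{ikt}$; this proves $i)$ and $ii)$. For $iii)$ I would invoke the hypothesis $\nu\in(0,\tfrac12]$ to order these values: since $|{-\nu}|=\nu\le 1-\nu=|1-\nu|\le|k-\nu|$ for every $k\neq 0$, the minimum is attained at $k=0$, whence $\sigma_1(B(x_0,1),A_{x_0,\nu})=\eta_0=\nu$, with real and radial first eigenfunction $\psi_0(r,t)=r^{\nu}$.

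Finally, for the completeness assertion $iv)$ I would restrict the eigenfunctions to $\partial B(x_0,1)$: since $r\equiv 1$ there, their traces are exactly $\{e^{ikt}\}_{k\in\mathbb Z}$, the standard Fourier basis of $L^2(\partial B(x_0,1))=L^2(\mathbb S^1)$, whose completeness is classical and is already recorded in the last line of Lemma~\ref{lemma_S}. I do not anticipate any genuine obstacle: the whole argument is a direct specialization of Subsection~\ref{rev_S}, and the only real computation—solving the Euler equation and evaluating the logarithmic derivative at $r=1$—is elementary. The one point deserving care is the ordering in $iii)$, where the assumption $\nu\le\tfrac12$ is precisely what guarantees that the ground state lies in the $k=0$ sector rather than the $k=1$ one.
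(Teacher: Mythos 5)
Your proposal is correct and follows essentially the same route as the paper: it specializes Lemma~\ref{lemma_S} to $\theta(r)=r$, solves the resulting Euler equation to get the two solutions $r^{\pm|k-\nu|}$, discards the unbounded one, and reads off $\eta_k=|k-\nu|$ from the boundary condition at $r=1$, exactly as in the paper's proof. The extra details you supply (the explicit ordering $\nu\le 1-\nu\le|k-\nu|$ for $k\ne 0$ under $\nu\in(0,\tfrac12]$, and the identification of the boundary traces with the Fourier basis $\{e^{ikt}\}$) are precisely the steps the paper leaves to Lemma~\ref{lemma_S}, so nothing is missing.
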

\begin{proof}
It is sufficient to note that any solution to the differential equation in \eqref{revolution_steklov} is of the form
$$
u_k(r)=b_{k1}r^{|k-\nu|}+b_{k2}r^{-|k-\nu|},
$$
and in order to have a solution of \eqref{revolution_steklov} we need to impose $b_{k2}=0$. The theorem now easily follows from Lemma \ref{lemma_S}.
\end{proof}

When $\nu\in(0,\frac{1}{2})$, the first eigenvalue is $\nu>0$. It is positive and simple and a corresponding eigenfunction is given by $r^{\nu}$ and it is radial, positive and increasing on $(0,1)$.

\smallskip

We can list the Steklov eigenvalues $\{\sigma_k(B(x_0,1),A_{x_0,\nu})\}_{k=1}^{\infty}$ as follows:
$$
\nu,1-\nu,1+\nu,2-\nu,2+\nu,...
$$ 

If $\nu=\frac{1}{2}$ all eigenvalues are double and are given by $\frac{1}{2},\frac{1}{2},\frac{3}{2},\frac{3}{2},...$.

In particular, $\sigma_1(B(x_0,1),A_{x_0,\nu})=\sigma_2(B(x_0,1),A_{x_0,\nu})=\frac{1}{2}$ and two linearly independent eigenfunctions are $u_1(x)=\sqrt{|x|}, u_2(x)=\sqrt{|x|}\left(\frac{x_1}{|x|}+i\frac{x_2}{|x|}\right)$. However, also in this case there exist a radial, real eigenfunction associated with $\sigma_1(B(x_0,1),A_{x_0,\nu})$, positive and increasing on $(0,1)$.

\section{Conformal invariance of magnetic energy}\label{sec:conformal}

Let  $(\Omega_1,g)$, $(\Omega_2,g)$ bet two $2$-dimensional manifolds, and let
$$
\Phi: (\Omega_1,g_1)\to (\Omega_2,g_2)
$$
be a conformal map between them. We can assume then that $\Phi^{\star}g_2=e^{2f}g_1$, where $f$ is the conformal factor. Let $\omega$ be a $1$-form on $\Omega_2$. We start from:

\begin{lemma}\label{conformaldelta}
We have
\begin{enumerate}[i)]
\item In the above notation
$$
\delta_{g_1}\Phi^{\star}\omega=e^{2f}\Phi^{\star}(\delta_{g_2}\omega).
$$
In particular, $\omega$ is co-closed if and only if $\Phi^{\star}\omega$ is co-closed.

\item For any complex $1$-form $\Omega$ on $\Omega_2$, one has:
$$
\int_{\Omega_1}\abs{\Phi^{\star}\omega}_{g_1}^2d\mu_{g_1}=\int_{\Omega_2}\abs{\omega}^2_{g_2}d\mu_{g_2}.
$$
\end{enumerate}
\end{lemma}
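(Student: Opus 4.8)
The plan is to factor the map through the pulled-back metric. Set $h:=\Phi^{\star}g_2=e^{2f}g_1$, so that $\Phi:(\Omega_1,h)\to(\Omega_2,g_2)$ is an isometry. Every metric operation (Hodge star, codifferential, pointwise norm, volume form) is natural with respect to isometries, so I would first transfer all quantities on $\Omega_2$ to $h$-quantities on $\Omega_1$ via $\Phi^{\star}$, and then reduce both claims to a purely \emph{conformal} comparison between the metric $h$ and the metric $g_1$ on the fixed manifold $\Omega_1$. The whole argument therefore rests on one local fact: how the Hodge star and the pointwise norm on forms scale under a conformal change $g\mapsto e^{2f}g$.

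For part i), I would write the codifferential on $1$-forms as $\delta_g=c\,*_g d\,*_g$ with a universal sign $c$ depending only on $n=2$ and the degree, and recall the conformal scaling of the Hodge star: on $k$-forms in dimension $n$ one has $*_{e^{2f}g}=e^{(n-2k)f}*_g$, which I would verify in a local $g_1$-orthonormal coframe. In dimension $2$ this means $*$ is conformally \emph{invariant} on $1$-forms and scales by $e^{-2f}$ on $2$-forms. Applying this twice to $\delta_h=c\,*_h d\,*_h$ (with $d$ metric-independent) yields $\delta_h=e^{-2f}\delta_{g_1}$ on $1$-forms, i.e. $\delta_{g_1}=e^{2f}\delta_h$; note the sign $c$ cancels, so the sign convention is immaterial. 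Combining with the naturality identity $\delta_h\Phi^{\star}\omega=\Phi^{\star}\delta_{g_2}\omega$ (valid because $\Phi$ is an $h$-to-$g_2$ isometry) gives $\delta_{g_1}\Phi^{\star}\omega=e^{2f}\Phi^{\star}(\delta_{g_2}\omega)$. Since $e^{2f}>0$ and $\Phi^{\star}$ is injective on forms, $\omega$ is co-closed if and only if $\Phi^{\star}\omega$ is.

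For part ii), the key is the pointwise identity $\abs{\omega}^2_{e^{2f}g}\,d\mu_{e^{2f}g}=\abs{\omega}^2_g\,d\mu_g$ for $1$-forms in dimension $2$: the squared norm uses the inverse metric and scales by $e^{-2f}$, while the area element scales by $e^{2f}$, so the product is conformally invariant (this is exactly the middle-degree phenomenon $n-2k=0$). Thus $\abs{\Phi^{\star}\omega}^2_{g_1}\,d\mu_{g_1}=\abs{\Phi^{\star}\omega}^2_{h}\,d\mu_{h}$, and since $\Phi$ is an isometry from $(\Omega_1,h)$ to $(\Omega_2,g_2)$ we have $\abs{\Phi^{\star}\omega}^2_h\,d\mu_h=\Phi^{\star}\big(\abs{\omega}^2_{g_2}\,d\mu_{g_2}\big)$. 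Integrating over $\Omega_1$ and applying the change-of-variables formula for the diffeomorphism $\Phi$ gives $\int_{\Omega_1}\abs{\Phi^{\star}\omega}^2_{g_1}\,d\mu_{g_1}=\int_{\Omega_2}\abs{\omega}^2_{g_2}\,d\mu_{g_2}$. The only genuine obstacle in either part is the conformal scaling law for the Hodge star; once that local computation is in hand, everything else is naturality of tensorial operations together with the change-of-variables formula.
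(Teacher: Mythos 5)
Your proposal is correct, and for part ii) it is in substance the same argument as the paper's, just packaged more systematically. The paper proves ii) by a direct frame computation: it pushes forward the rescaled $g_1$-orthonormal frame, $E_j=d\Phi(e^{-f}e_j)$, to get the pointwise identity $\abs{\Phi^{\star}\omega}_{g_1}^2=e^{2f}\bigl(\abs{\omega}_{g_2}^2\circ\Phi\bigr)$, and then integrates via the change-of-variables formula (the factor $e^{2f}$ being exactly the Jacobian, since $d\mu_{\Phi^{\star}g_2}=e^{2f}d\mu_{g_1}$ in dimension $2$). Your factorization through the intermediate metric $h=\Phi^{\star}g_2=e^{2f}g_1$, combined with the middle-degree invariance $\abs{\omega}^2_{e^{2f}g}\,d\mu_{e^{2f}g}=\abs{\omega}^2_{g}\,d\mu_{g}$ for $1$-forms on surfaces, encodes precisely the same pointwise identity; what your phrasing buys is that the mechanism (the exponent $n-2k=0$) is made explicit, so it is clear the statement is special to $1$-forms in dimension $2$, whereas the paper's frame computation is more elementary and self-contained. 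For part i) the paper gives no proof at all (``the first fact is standard''), so here your proposal genuinely adds content: the derivation via $\delta=c\,*d*$, the scaling law $*_{e^{2f}g}=e^{(n-2k)f}*_g$ (so $\delta_h=e^{-2f}\delta_{g_1}$ on $1$-forms), and naturality of $\delta$ under the isometry $(\Omega_1,h)\to(\Omega_2,g_2)$ is complete and correct, and your observation that the sign $c$ cancels is apt; one can add that since $\delta$ contains two Hodge stars it commutes with isometries regardless of orientation, so no orientation-preserving hypothesis is needed for i), and for ii) the use of Riemannian measures (rather than volume forms) makes the change of variables valid in either orientation class as well.
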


\begin{proof}
The first fact is standard; for the second, fix a $g_1$- orthonormal basis $(e_1,e_2)$ and let
$$
E_1=d\Phi(e^{-f}e_1), \quad E_2=d\Phi(e^{-f}e_2).
$$
Using these orthonormal frames to compute the norms we end-up with the identity:
$$
\abs{\Phi^{\star}\omega}_{g_1}^2=e^{2f}(\abs{\omega}^2_{g_2}\circ\Phi).
$$
Integrating the identity on $\Omega_1$ and using the change of variables formula, we obtain the assertion.
\end{proof}

Now fix a potential one-form $A$ on $\Omega_2$. We get a potential one-form $\hat A=\Phi^{\star}A$ on $\Omega_1$ by pull-back. Recall the magnetic gradient on $\Omega_2$:
$$
d^Au=du-iuA
$$
for a complex valued function $u$ on $\Omega_2$. Consider the function
$$
\hat u=u\circ\Phi=\Phi^{\star}u
$$
on $\Omega_1$. Then, since $d\Phi^{\star}=\Phi^{\star}d$:
$$
d^{\hat A}\hat u=d^{\Phi^{\star}A}\Phi^{\star}u
=d\Phi^{\star}u-i\Phi^{\star}u\Phi^{\star}A
=\Phi^{\star}\Big(du-iuA\Big)
=\Phi^{\star}d^Au
$$
Applying the $L^2$ invariance property to $\omega=d^Au$ we conclude with the following fact, expressing the conformal invariance of the magnetic energy.

\begin{prop}\label{cienergy} Let $\Phi:\Omega_1\to\Omega_2$ be a conformal map between 2-manifolds, let $A$ be a potential one-form on $\Omega_2$ and let $\hat A=\Phi^{\star}A$.
For any complex function $u$ on $\Omega_2$, let $\hat u=u\circ\Phi$. Then we have:
$$
\int_{\Omega_1}\abs{d^{\hat A}\hat u}^2d\mu_1=\int_{\Omega_2}\abs{d^Au}^2 d\mu_2.
$$
\end{prop}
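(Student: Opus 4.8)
The plan is to reduce the statement to the $L^2$-invariance of $1$-forms already established in Lemma \ref{conformaldelta}(ii), applied to the single $1$-form $\omega = d^A u$. The essential structural fact is that the magnetic gradient intertwines with pull-back: since the exterior derivative commutes with pull-back, i.e. $d\,\Phi^{\star} = \Phi^{\star} d$, and since $\hat A = \Phi^{\star}A$, $\hat u = \Phi^{\star}u$, one computes
$$
d^{\hat A}\hat u = d(\Phi^{\star}u) - i(\Phi^{\star}u)(\Phi^{\star}A) = \Phi^{\star}(du - iuA) = \Phi^{\star}(d^A u),
$$
which is exactly the identity recorded immediately before the statement. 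This is the only place where the specific algebraic form of the magnetic gradient enters.

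I would then apply Lemma \ref{conformaldelta}(ii) with $\omega = d^A u$, which yields
$$
\int_{\Omega_1}\abs{d^{\hat A}\hat u}^2_{g_1}\,d\mu_{g_1} = \int_{\Omega_1}\abs{\Phi^{\star}(d^A u)}^2_{g_1}\,d\mu_{g_1} = \int_{\Omega_2}\abs{d^A u}^2_{g_2}\,d\mu_{g_2},
$$
which is precisely the asserted conformal invariance. No further computation is required: the proposition is just the specialization of the lemma to the magnetic-gradient $1$-form, so once the intertwining identity is in hand the proof closes in one line.

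The one point that deserves emphasis — and which I regard as the conceptual crux rather than a technical obstacle — is \emph{why} the $L^2$-norm of $1$-forms is conformally invariant, that is, why Lemma \ref{conformaldelta}(ii) holds at all. This is special to dimension $2$: under $\Phi^{\star}g_2 = e^{2f}g_1$ the pointwise norm of a pulled-back $1$-form scales as $\abs{\Phi^{\star}\omega}^2_{g_1} = e^{2f}\bigl(\abs{\omega}^2_{g_2}\circ\Phi\bigr)$, while the volume form scales as $\Phi^{\star}d\mu_{g_2} = e^{nf}\,d\mu_{g_1}$ with $n=2$; the two factors $e^{2f}$ cancel exactly upon integration through the change-of-variables formula. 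In dimensions $n\neq 2$ this cancellation fails, so the argument — and hence the conformal transplantation used in the proofs of Theorems \ref{ab_S_szego} and \ref{thm:weinstock} — is genuinely two-dimensional.
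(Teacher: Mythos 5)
Your proposal is correct and follows exactly the paper's own argument: establish the intertwining identity $d^{\hat A}\hat u=\Phi^{\star}(d^Au)$ via $d\,\Phi^{\star}=\Phi^{\star}d$, then invoke the $L^2$-invariance of $1$-forms from Lemma \ref{conformaldelta}(ii) with $\omega=d^Au$. Your closing remark on why the invariance is special to dimension $2$ (the cancellation of $e^{2f}$ against the area element) is precisely the computation in the paper's proof of that lemma, so nothing is missing or different.
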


\subsection{Aharonov-Bohm potentials}

Let now $(\Omega,x_0)$ be a simply connected plane domain punctured at $x_0$, with Aharonov-Bohm potential having pole at $x_0$ and flux $\nu$. We can assume without loss of generality that $x_0$ is the origin. Then:
\begin{equation}\label{ABpotential}
A=\dfrac{\nu}{r^2}(-ydx+xdy).
\end{equation}
Take the unit disk $D$ centered at the origin. By the standard theory, there is a conformal map
$
\Phi:\Omega\to D
$
fixing the origin. Note that the form $\Phi^{\star}A$ on $\Omega$ is closed (clear) and co-closed (from previous section, because $\Phi$ is conformal). Then,
$\Phi^{\star}A$ is harmonic.
It is also clear that $\Phi^{\star}A$ has flux $\nu$ around the pole, the origin of $\real 2$ (by elementary change of variable in dimension one). It follows that $\Phi^{\star}A$ differs from  $A$ by an exact $1$-form, and gauge invariance applies. Precisely:
\begin{lemma}\label{gaugeinvariance} Let $\Omega$ be a plane domain and let $D$ be the unit disk, both punctured at $O\in \Omega$. Let $\Phi:\Omega\to\ D$ be the unique conformal map 
fixing $O$. If $A$ is the canonical Aharonov-Bohm potential  with pole $O$ and flux $\nu$, as in \eqref{ABpotential}, then
$$
\sigma_k(\Omega,A)=\sigma_k(\Omega,\Phi^{\star}A)
$$
for all $k$.
\end{lemma}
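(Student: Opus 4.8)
The plan is to reduce the equality of the two spectra to the gauge-invariance identity already recorded in Appendix \ref{gauge_inv}. The preceding discussion supplies the two facts I need: $\Phi^{\star}A$ is harmonic on $\Omega\setminus\{O\}$ (closedness is automatic since $d\Phi^{\star}=\Phi^{\star}d$, and co-closedness is preserved by part i) of Lemma \ref{conformaldelta}), and $\Phi^{\star}A$ carries the same flux $\nu$ around $O$ as $A$. First I would use these to show that $A$ and $\Phi^{\star}A$ differ by an \emph{exact} form. Both are closed $1$-forms on the punctured, simply connected domain $\Omega\setminus\{O\}$, whose first de Rham cohomology is one-dimensional and is detected precisely by the flux around $O$; since the two fluxes agree, $A-\Phi^{\star}A$ is a closed form with vanishing flux, hence exact. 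I would therefore write
$$
A-\Phi^{\star}A=d\psi,
$$
with $\psi$ a \emph{single-valued} smooth function on $\Omega\setminus\{O\}$. This is the one place where equality of the fluxes, rather than mere congruence modulo $\mathbb Z$, is essential: it makes $\psi$ honestly single-valued, so that $e^{-i\psi}$ is a genuine (not merely formal) gauge factor.

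Next I would invoke the gauge identity of Appendix \ref{gauge_inv}. Since $\Phi^{\star}A=A-\nabla\psi$, that identity gives, for every complex function $u$,
$$
\nabla^{\Phi^{\star}A}\bigl(e^{-i\psi}u\bigr)=e^{-i\psi}\,\nabla^{A}u,
$$
whence $\abs{\nabla^{\Phi^{\star}A}(e^{-i\psi}u)}=\abs{\nabla^{A}u}$ pointwise on $\Omega$, while $\abs{e^{-i\psi}u}=\abs{u}$ everywhere, in particular on $\partial\Omega$. Thus $u\mapsto e^{-i\psi}u$ is a linear bijection that preserves both the magnetic Dirichlet energy $\int_{\Omega}\abs{\nabla^{A}u}^2$ and the boundary norm $\int_{\partial\Omega}\abs{u}^2$. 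Feeding this into the min-max characterization \eqref{minmax} subspace by subspace, the Rayleigh quotients are left unchanged, and I would conclude
$$
\sigma_k(\Omega,A)=\sigma_k(\Omega,\Phi^{\star}A)
$$
for every $k$.

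The main obstacle is the behaviour of the gauge map at the pole: I must verify that $u\mapsto e^{-i\psi}u$ actually carries the magnetic Sobolev space $H^1_{A}(\Omega,\mathbb C)$ boundedly, with bounded inverse, onto $H^1_{\Phi^{\star}A}(\Omega,\mathbb C)$, rather than merely matching the formal energies. This reduces to showing that $e^{-i\psi}$ is Lipschitz across $O$, for which it suffices that $d\psi=A-\Phi^{\star}A$ extends smoothly over the pole. I would check this by a local computation near $O$: writing $A=\nu\,dt$ and $\Phi^{\star}A=\nu\,d(t\circ\Phi)$, the conformal map fixing $O$ satisfies $\Phi(z)=\Phi'(0)z+O(z^2)$, so $\arg\Phi=\mathrm{const}+t+O(r)$ and hence $A-\Phi^{\star}A$ is $d$ of a function that is smooth at $O$; the singular parts of $A$ and $\Phi^{\star}A$ cancel. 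Consequently $\psi$ extends smoothly to all of $\Omega$, $e^{-i\psi}$ is a smooth unimodular factor, and multiplication by it preserves the weighted integrability near $O$ characterizing $H^1_{A}$ (cf.\ Proposition \ref{eq1}). With this verified, the gauge map is the required bounded isomorphism and the equality of all Steklov eigenvalues follows.
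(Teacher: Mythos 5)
Your proof is correct and takes essentially the same route as the paper: harmonicity of $\Phi^{\star}A$ (via Lemma \ref{conformaldelta}) together with preservation of the flux gives $A-\Phi^{\star}A=d\psi$ exact, and the gauge identity of Appendix \ref{gauge_inv} then transfers the Steklov spectrum unchanged. Your extra verification that $d\psi$ extends smoothly across the pole, so that multiplication by $e^{-i\psi}$ is a genuine isomorphism of the magnetic Sobolev spaces, is a technical detail the paper leaves implicit rather than a different approach.
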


\section*{Acknowledgements}\label{ackref}
The first author acknowledges support of the SNSF project ‘Geometric Spectral Theory’, grant number 200021-19689. The second and the third author are members of the Gruppo Nazionale per le Strutture Algebriche, Geometriche e le loro Applicazioni (GNSAGA) of the I\-sti\-tuto Naziona\-le di Alta Matematica (INdAM).


\addcontentsline{toc}{chapter}{Bibliography}
\bibliographystyle{plain}
\bibliography{biblioCPSmagnetic1}
\end{document}